\titleformat{\section}  {\normalfont\fontsize{12}{15}\bfseries}{\thesection}{1em}{}
\titleformat{\subsection} {\normalfont\fontsize{10}{12}\bfseries}{\thesubsection}{1em}{}
\def\R{\mathbb{R}}
\def\E{\mathbb{E}}
\def\thetaOU{\theta}
\def\lambdaG{\lambda}
\newcommand{\wbar}\widebar
\newcommand{\cov}{\mathrm{cov}}
\definecolor{mygrey}{gray}{0.75}
\newcommand{\mbf}[1]{\boldsymbol{#1}}
\newcommand{\innerp}[2]{\langle #1,#2 \rangle}
\newcommand{\inp}[1]{\langle{#1}\rangle}
\newcommand{\realR}[1]{\mathbb{R}^{#1}}
\newcommand{\br}{\mbf{r}}
\newcommand{\bx}{\mbf{x}}
\newcommand{\bB}{\mbf{B}}
\newcommand{\bX}{\mbf{X}}
\newcommand{\bY}{\mbf{Y}}
\newcommand{\by}{\mbf{y}}
\newcommand{\btheta}{\mbf{A}}
\def\pV{J}
\newcommand{\intkernel}{\phi}
\newcommand{\intkernelvar}{\varphi}
\newcommand{\hypspace}{\mathcal{H}}
\newcommand{\argmax}[1]{\underset{#1}{\operatorname{arg}\operatorname{max}}\;}
\DeclareMathAlphabet{\mathpzc}{OT1}{pzc}{m}{it}
\newcommand{\FL}[1]{\textcolor{blue}{{#1}}}
\newcommand{\red}[1]{\textcolor{red}{{#1}}}
\newcommand{\gray}[1]{\textcolor{gray}{{#1}}}
\newtheorem{theorem}{Theorem}
\newtheorem{definition}[theorem]{Definition}
\newtheorem{lemma}[theorem]{Lemma}
\newtheorem{proposition}[theorem]{Proposition}
\newtheorem{remark}[theorem]{Remark}
\newenvironment{proof}[1][Proof]{\noindent\textbf{#1.} }{\ \rule{0.5em}{0.5em}}
\numberwithin{equation}{section}
\numberwithin{theorem}{section}
\newcommand\RR{{\mathbb R}}
\newcommand\NN{{\mathbb N}}
\newcommand\CC{{\mathbb C}}
\journal{Journal of \LaTeX\ Templates}
\begin{document}

\begin{frontmatter}

\title{On the identifiability of interaction functions\\ in systems of interacting particles}


\author[address1]{Zhongyang Li} \ead{zhongyang.li@uconn.edu}
\author[address2,address3]{Fei Lu\corref{mycorrespondingauthor}} \ead{feilu@math.jhu.edu}
\author[address2,address3]{Mauro Maggioni} \ead{mauromaggionijhu@icloud.com}
\author[address4]{Sui Tang}\ead{suitang@math.ucsb.edu}
\author[address5]{Cheng Zhang}\ead{czhang77@ur.rochester.edu}


\cortext[mycorrespondingauthor]{Corresponding author.}
\address[address1]{Department of Mathematics,  University of Connecticut, Storrs, CT, USA 06269}
\address[address2]{Department of Mathematics, Johns Hopkins University, Baltimore, MD, USA, 21218}
\address[address3]{Department of Applied Mathematics and Statistics, Johns Hopkins University, Baltimore, MD, USA, 21218}
\address[address4]{Department of Mathematics,  University of California Santa Barbara, Isla Vista, USA 93117}
\address[address5]{Department of Mathematics, University of Rochester, Rochester, NY 14627}

\begin{abstract}
We address a fundamental issue in the nonparametric inference for systems of interacting particles: the identifiability of the interaction functions. We prove that the interaction functions are identifiable for a class of first-order stochastic systems, including linear systems with general initial laws and nonlinear systems with stationary distributions. We show that a coercivity condition is sufficient for identifiability and becomes necessary when the number of particles approaches infinity. The coercivity is equivalent to the strict positivity of related integral operators, which we prove by showing that their integral kernels are strictly positive definite by using M\"untz type theorems.
\end{abstract}

\begin{keyword}
interacting particle systems, nonparametric regression, positive-definite kernel, positive operator, identifiability
\end{keyword}
\end{frontmatter}
\vspace{-6mm}
\tableofcontents
\section{Introduction}

Dynamical systems of interacting particles or agents are widely used in many areas in science and engineering, such as physics \cite{DOCBC2006}, biology \cite{bernoff2013nonlocal}, social science \cite{motsch2014heterophilious,BT2015}; we refer to \cite{vicsek2012collective,carrillo2017review} for reviews. With the recent advancement of technology in data collection and computation, inference of such systems from data has attracted increasing attention \cite{herbert2011inferring, casadiego2017model, huang2019learning}. In general, such systems are high-dimensional and there is no natural parametric form for the interaction laws, so their inference tends to be statistically and computationally infeasible due to the curse of dimensionality. When the particles interact according to a function that depends only on pairwise distances, one only needs to estimate such interaction function, opening the possibility of statistically and computationally efficient inference techniques \cite{BFHM17,LZTM19, LMT19}. However, a fundamental challenge arises: the interaction function may be non-identifiable, because its values are under-determined from the observation data consisting of trajectories. To ensure the identifiability of the interaction function, a coercivity condition is introduced in \cite{BFHM17,LZTM19,LMT19}. In this study, we show that the coercivity condition is sufficient for the identifiability, that it becomes necessary when the number of particles goes to infinity, and that it holds true for linear systems and for a class of three-particle nonlinear systems with a stationary distribution.

More precisely, we consider a first-order stochastic gradient system of interacting particles in the form
\begin{equation}\label{eq:sys1st}
\left\{
\begin{aligned}
  d{\bX_i^t}& =\frac{1}{N} \sum_{1\leq j\leq N, j\neq i} \phi(|\bX_{j}^t - \bX_i^t|) \frac{\bX_{j}^t - \bX_i^t}{|\bX_{j}^t - \bX_i^t|}dt+ \sigma d\bB_i^t, \quad \text{for $i = 1, \ldots, N$},\\
  \bX^0&\sim \mu_0,
\end{aligned}
\right.
\end{equation}
where $\bX_i^t \in \realR{d}$ represents the position of particle $i$ at time $t$, $\{\bB_i^t\}_{i=1}^N$ are independent Brownian motions in $\realR{d}$ representing the random environment, $|\cdot|$ denotes the Euclidean norm, $\sigma>0$ is the strength of the noise. Without loss of generality, we assume $\sigma=1$ in \eqref{eq:sys1st}.
The function $\phi:\R^+=[0,\infty)\to \R$ models the pairwise interaction between particles, which is referred to as the \emph{interaction function}.
We assume that the initial condition $\bX^0$ has an exchangeable absolutely continuous distribution $ \mu_0$ on the state space $\R^{dN}$, that is, the joint distribution of $(\bX^0_{i_1},\ldots,\bX^0_{i_N})$ is $\mu_0$ for any ordering of the index set $\{i_1,\ldots,i_N\}= \{1,\ldots, N\}$. As a consequence, combining with the fact that the system is equivalent under permutations of the indices of the agents,
the distribution of $\bX^t$ is exchangeable for any $t\in [0,T]$.

We consider the identifiability of the interaction function $\phi$ from many independent trajectories on a time interval $[0,T]$, denoted by $\{\bX^{[0,T],m}\}_{m=1}^M$, in the likelihood-based nonparametric inference setting. We focus on the case of infinitely many trajectories (i.e. $M=\infty$) for our analysis. Clearly, the function space for inference must depend on the information from the process defined by the system \eqref{eq:sys1st}, and in particular we can hope to estimate the interaction function only on the interval explored by pairwise distances.  A natural choice is the space  $L^2(\wbar \rho_T)$ (or a subspace thereof), where $\wbar \rho_T$ is the average-in-time distribution of all the pairwise distances $\{|\bX_i^t-\bX_{j}^t|, t\in [0,T]\}_{i,j=1}^N$. By the exchangeability of the distribution of $\bX^t$, the distribution $\rho_t$ of $|\bX_i^t-\bX_{j}^t|$ is the same for all $(i,j)$ pairs (which is why we may abuse notation and avoid writing $\rho_{t,i,j}$), thus $\wbar \rho_T$ can be written as
 \begin{equation} \label{eq:rhoavg}
  \wbar{\rho}_T(dr) : = \frac{1}{T} \int_0^T \rho_t(dr) dt, \quad\text{ with }\quad \rho_t(dr) : =  \E[\delta(|\bX_i^t-\bX_{j}^t|\in dr)],
 \end{equation}
In other words, $\wbar \rho_T$ is the average of the measures $\{\rho_t, t\in[0,T] \}$. Note that $\wbar \rho_T$ depends on both the initial distribution and the interaction function $\phi$.

We define the identifiability of the interaction function as follows.

\begin{definition}[Identifiability]\label{def:identifiability}
The interaction function $\phi$ of the system \eqref{eq:sys1st}, which defines the process $\bX^{[0,T]}$, is said to be identifiable in a linear subspace $\hypspace$ of $L^2(\wbar\rho_T)$, if it is the unique maximizer of the expectation of the log-likelihood ratio of the process.
\end{definition}

In practice, the above identifiability requires a unique global maximizer for the expectation of the log-likelihood ratio (a functional on the high- or infinite-dimensional subspace $\hypspace$, see Section \ref{sec:inference} for details), which is difficult to verify. The following coercivity condition, introduced in  \cite{BFHM17,LZTM19,LMT19,LMT20}, provides an appealing alternative because it can be numerically verified from data. It ensures the uniqueness of the maximizer of the empirical likelihood ratio on finite dimensional hypothesis spaces by ensuring the Hessian to be strictly negative definite.

\begin{definition}[Coercivity condition on a time interval] \label{def_coercivty}
The  system \eqref{eq:sys1st} on $[0,T]$ 
 is said to satisfy a coercivity condition on a finite-dimensional linear subspace $\mathcal{H}\subset L^2(\bar \rho_T)$ with $\bar \rho_T$ defined in \eqref{eq:rhoavg} if
\begin{equation} \label{eq:coercivityDef}
c_{\mathcal{H},T} : = \quad \inf_{h\in  \mathcal{H}, \, \|h\|_{L^2(\wbar \rho_T)}=1}\frac{1}{T}\int_{0}^T \E[h(|\br_{12}^t|)h(|\br_{13}^t|)\frac{\innerp{\br_{12}^t}{\br_{13}^t}}{|\br_{12}^t||\br_{13}^t|} ]dt >0,
\end{equation}
where $\br_{ij}^t = \bX^{t}_i- \bX^t_j$. When $\hypspace\subseteq L^2(\wbar \rho_T)$ is infinite-dimensional, we say that the system satisfies a coercivity condition on $\hypspace$ if the coercivity condition holds on each finite dimensional linear subspace of $\mathcal{H}$.
\end{definition}

The coercivity condition on $\hypspace$ defined here is slightly different than the previous one in \cite{LZTM19,LMT19,LMT20}, which requires  $c_{\hypspace,T}+\frac{1}{N-2}>0$ and ensures the uniform concavity of the expectation of the log-likelihood ratio. This new definition has the advantage of being independent of $N$, and requires a positive coercivity constant $c_{\hypspace,T}$ only on each finite-dimensional hypothesis space $\hypspace$, rather than on any compact set of $L^2(\bar \rho_T)$, making it suitable for studying the mean-field limit when $N\to \infty$.
This new definition also highlights the dependence on the joint distribution of $(\br_{12}^t,\br_{12}^t)$ only, and the connection with positive integral operators (see Section \ref{sec:operator}). A drawback is that it can be slightly more restrictive than the previous one for finite $N$, with such difference vanishing as $N\rightarrow\infty$ (see Remark \ref{rmk:differentCC} for details).

We show that the coercivity condition is sufficient for the identifiability, and it holds for certain classes of interaction functions,  including $\phi(r) = r^{2\beta-1}$ with $\beta \in [\frac{1}{2},1]$:
\begin{theorem}\label{thm_mainAll}
Consider the system  \eqref{eq:sys1st} on $[0,T]$ with interaction function $\phi$, and the average distribution of the pairwise distances between particles $\wbar \rho_T$ as in \eqref{eq:rhoavg}.
\begin{itemize}
\item[(a).] The interaction function $\phi$ is identifiable in a linear subspace $\hypspace$ of $L^2(\wbar\rho_T)$ if the coercivity condition holds on $\hypspace$.
\item[(b).] The coercivity condition holds on $L^2(\wbar\rho_T)$ if $\phi(r) = \theta r$, i.e. when the system is linear, and the initial distribution $\mu_0$ of $\bX^0$ is a non-degenerate exchangeable Gaussian.
\item[(c).] The coercivity condition holds on $L^2(\wbar\rho_T)$ for nonlinear systems with three particles and with the following interaction functions and initial distribution:
\begin{enumerate}
 \item the interaction function $\phi$ is of the form
\begin{equation}\label{eq:Phi_nonlinear}
\phi(r) = \Phi'(r), \quad \text{ where } \Phi(r) :=  ar^{2\beta} + \Phi_0(r), \quad a>0,\, \beta \in[\frac{1}{2},1],
\end{equation}
where $\Phi_0 \in C^2(\R^+,\R)$ satisfies that $f(u,v): =\Phi_0(|u-v|): \R^d\times \R^d\to \R$ is a negative definite function and that $\lim_{r\to \infty}\Phi(r)=+\infty$;
    \item the joint probability density of $(\bX^0_1-\bX^0_2,\bX^0_1-\bX^0_3)$ is (with $Z$ being a normalizing constant)
\begin{align}\label{eq:r_invPDF}
p(u,v) = \frac{1}{Z} e^{-\frac{1}{3}\left[\Phi(|u|) + \Phi(|v|) + \Phi(|u-v|)\right], }
\end{align}
 i.e., an invariant density of the process $(\bX^t_1-\bX^t_2,\bX^t_1-\bX^t_3)$.
\end{enumerate}
\end{itemize}
\end{theorem}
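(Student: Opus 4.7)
The plan is to separate the three claims, since they rely on different machinery: (a) is a soft argument via the Radon--Nikodym derivative, whereas (b) and (c) require proving strict positive-definiteness of explicit integral kernels through a Müntz-type mechanism.

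For part (a), I would invoke Girsanov's theorem to compute the expected log-likelihood ratio between the laws induced by a candidate $\psi$ and the true $\phi$. Since both SDEs have the same constant diffusion coefficient, expectation under $P_\phi$ kills the martingale part and leaves
\begin{equation*}
\E_\phi \log \frac{dP_\phi}{dP_\psi} = \frac{1}{2N^2}\sum_{i=1}^N \E_\phi \int_0^T \Bigl|\sum_{j\neq i} h(|\br_{ij}^t|)\frac{\br_{ij}^t}{|\br_{ij}^t|}\Bigr|^2\,dt,
\end{equation*}
with $h:=\phi-\psi$. Expanding the norm-square into diagonal terms $j=k$ and off-diagonal terms $j\neq k$, and using the exchangeability of $\bX^t$ to collapse the $N(N-1)$ diagonal and $N(N-1)(N-2)$ off-diagonal contributions to two canonical averages, one obtains
\begin{equation*}
\E_\phi \log \tfrac{dP_\phi}{dP_\psi} = \tfrac{T(N-1)}{2N}\|h\|_{L^2(\wbar\rho_T)}^2\bigl[1+(N-2)c_h\bigr],
\end{equation*}
where $c_h$ is precisely the functional in \eqref{eq:coercivityDef} evaluated at the normalized $h$. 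Coercivity forces $c_h \geq c_{\hypspace,T}>0$, so the right-hand side is strictly positive for every nonzero $h\in\hypspace$, which is identifiability.

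For parts (b) and (c), the common first step is to rewrite the coercivity functional as a bilinear form on $L^2(\wbar\rho_T)$,
\begin{equation*}
\mathcal{Q}_T(h)=\int_0^\infty\!\!\int_0^\infty h(r_1)\,h(r_2)\,G_T(r_1,r_2)\,dr_1\,dr_2,
\end{equation*}
where $G_T(r_1,r_2)$ is the time-averaged joint density of $(|\br_{12}^t|,|\br_{13}^t|)$ multiplied by the conditional expectation of $\cos\angle(\br_{12}^t,\br_{13}^t)$ given those two radii. Coercivity on $L^2(\wbar\rho_T)$ is thus equivalent to strict positive-definiteness of the kernel $G_T$. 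For part (b) with $\phi(r)=\theta r$, the system is a linear SDE propagating the Gaussian initial law, so $(\br_{12}^t,\br_{13}^t)$ remains jointly Gaussian, with a covariance that exchangeability reduces to a few scalar time-dependent parameters. I would exploit this Gaussian structure to express $G_T$ (after a polar change of variables) in a form whose positivity on the monomial family $\{r^\alpha\}$ can be certified by nonvanishing Gram-type moment determinants, and then invoke a Müntz-type density theorem to extend strict positivity to all of $L^2(\wbar\rho_T)$.

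Part (c) uses the same operator recipe but benefits from having the explicit invariant density \eqref{eq:r_invPDF} for $(\br_{12},\br_{13})$, so the time-average in $G_T$ collapses to a single spatial integral against that density. The growth condition $\Phi(r)\to\infty$ together with the dominating term $ar^{2\beta}$ provides an integrable Gaussian-like weight, and the negative-definiteness of the perturbation $\Phi_0$ ensures its contribution to the bilinear form cannot destroy positivity. I expect the main obstacle to be precisely this last step: proving \emph{strict} rather than merely nonnegative positive-definiteness of $G_T$ under the three-body coupling of $|u|,|v|,|u-v|$ in \eqref{eq:r_invPDF}, because the symmetry of the three-point interaction correlates the two radii nontrivially and must be decoupled before a Müntz-type density argument on monomials can be applied cleanly.
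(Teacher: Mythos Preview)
Your outline for (a) matches the paper's Proposition~\ref{propID_CC} essentially verbatim, so that part is fine.

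For (b), the mechanism you sketch is not quite right. The paper does not compute Gram-type moment determinants; instead it Taylor-expands the Gaussian kernel: writing $p^\lambda(r\xi,s\eta)\propto e^{-c(r^2+s^2)}e^{c\,rs\langle\xi,\eta\rangle}$ and expanding the second exponential gives, after integrating over $(\xi,\eta)\in S^{d-1}\times S^{d-1}$, a series $K^\lambda(r,s)=\sum_{k\ \mathrm{odd}} a_k(rs)^{k-1}$ with all $a_k>0$, so $\langle Q^\lambda h,h\rangle=\sum_k a_k\bigl(\int h(r)r^{k-1}w(r)\,dr\bigr)^2$ is a sum of squares. M\"untz then says this vanishes only for $h=0$. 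A second gap: this clean radial argument requires $\cov(\bX_i^0)-\cov(\bX_i^0,\bX_j^0)=\lambda_0 I_d$. For a \emph{general} non-degenerate exchangeable Gaussian the paper must abandon the radial kernel, work with the non-radial kernel $K_t(u,v)=\tfrac{\langle u,v\rangle}{|u||v|}\tfrac{p_t(u,v)}{q_t(u)q_t(v)}$ on $L^2(\R^d,\rho_t)$, diagonalize the covariance, and appeal to density of polynomials in $L^2$ of a Gaussian measure on $\R^d$ (Proposition~\ref{Gaussian}). Your proposal does not anticipate this complication.

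For (c), you correctly locate the obstacle but do not propose a way through it; this is the genuine missing idea. The paper's device is a two-step ``comparison to Gaussian''. First, since $\Phi_2(u,v):=|u|^{2\beta}+|v|^{2\beta}-|u-v|^{2\beta}$ is positive-definite (because $|u-v|^{2\beta}$ is negative definite for $\beta\le1$), one has the lower bound $\langle u,v\rangle e^{\Phi_2(u,v)}\ge \langle u,v\rangle\,\Phi_2(u,v)$ in the bilinear-form sense (keeping only the $n=1$ term of the Taylor series; all other terms are positive-definite). The symmetric pieces $|u|^{2\beta}+|v|^{2\beta}$ integrate to zero against $\langle u,v\rangle$, reducing the problem to $-\int\!\!\int \widetilde h(|u|)\widetilde h(|v|)\tfrac{\langle u,v\rangle}{|u||v|}|u-v|^{2\beta}\,du\,dv$. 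Second, the Bernstein--Gamma representation $|u-v|^{2\beta}=\tfrac{\beta}{\Gamma(1-\beta)}\int_0^\infty(1-e^{-\lambda|u-v|^2})\lambda^{-\beta-1}\,d\lambda$ rewrites this as a positive superposition (over $\lambda$) of Gaussian kernels $e^{-\lambda|u-v|^2}$, each of which is handled by the sum-of-squares plus M\"untz argument from part~(b). Without these two reductions your plan has no route to strict positivity.
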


Part (a) of Theorem \ref{thm_mainAll} is proved in Proposition \ref{propID_CC}. In addition to being sufficient for identifiability, the coercivity condition also becomes necessary when $N$, the number of particles in the system, is infinity. In particular, the coercivity constant $c_{\hypspace, T}$ in \eqref{eq:coercivityDef} is independent of $N$ and it depends only on the distribution of the process $(\bX^t_1-\bX^t_2,\bX^t_1-\bX^t_3)$. We prove Part (b) in Theorem \ref{thm:OU_non-radial} and Part (c) in Theorem \ref{Thm:cc_general}.

We  show that the coercivity condition is equivalent to the strict positivity of an integral operator arising from the expectation in \eqref{eq:coercivityDef} (see Section \ref{sec:operator}).
Then, to prove the strict positivity of the operator, we show that its integral kernel is strictly positive-definite, by introducing a series representation of the integral kernel and resorting to M\"untz-type theorems for the completeness of polynomials in $L^2(\wbar \rho_T)$  (see Section \ref{sec:OU}). In particular, in the treatment of nonlinear systems, we develop a ``comparison to a Gaussian kernel'' technique (Section \ref{sec:powerFn}-\ref{sec:genFn}) to prove the strictly positive-definiteness of integral kernels.

\bigskip
This study serves as a starting point towards understanding the identifiability of the interaction function for particle or agent systems. While providing a full characterization for linear systems, i.e., the coercivity condition holds for general initial distributions, Theorem \ref{thm_mainAll} provides limited results for nonlinear systems, covering only stationary initial distribution for systems with $N=3$ particles and with polynomial dominated interaction functions. The constraint $N=3$ arises because our series representation of the integral kernel is based on the explicit expression of the joint distribution of $(\br_{12}^t, \br_{13}^t)$, which is currently unknown to us when $N>3$, albeit we are hopeful to eventually be able to remove this constraint in future work. The constraint of stationary initial distribution may be removable by perturbation-type arguments. Future directions of research include, to name just a few, first-order nonlinear systems with more general interaction functions that are regular \cite{huang2019learning, LZTM19} or singular \cite{liu2016propagation,li2019mean}, second-order systems and systems with multiple types of particles or agents \cite{LMT19}, and mean-field equations \cite{meleard1996asymptotic,cattiaux2008probabilistic,JW17}.

Positive-definite integral kernels play an increasingly prominent role in many applications in science, in particular in statistical learning theory and in reproducing kernel Hilbert space (RKHS) representations \cite{cucker2002mathematical, smale2009geometry, Fasshauer11}. As a by-product, our results lead to a new class of positive-definite integral kernels from particle systems, and our technique of comparison to a Gaussian kernel may be of broader interest, for example in establishing identifiability of statistical learning problems.

The organization of the paper is as follows: we summarize in Table \ref{tab:notation} the  frequently used notations.  In Section \ref{sec:ccPD}, we introduce the coercivity conditions in inference, and establish the connections between identifiability, the coercivity condition and positive integral operators.
In Section \ref{sec:OU} we prove the coercivity condition for linear systems and Section \ref{sec:3particleSys} is devoted to a class of three-particle nonlinear systems with stationary distributions. We list in Section \ref{sec:append} the preliminaries, such as properties of positive-definite kernels, a M\"untz-type theorem on the half-line, and a stationary measure for gradient systems.
 \begin{table}[!h] 	\vspace{-1mm}
	\begin{center}
		\caption{  Notations  } \label{tab:notation}	\vspace{-2mm}
		\begin{tabular}{ l  l }
		\toprule 
			Notation   &  Description \\  \hline
			$\phi$ and $\varphi$    &  the true and, respectively, a generic interaction function       \\
			$\Phi$                          & the true interaction potential, such that $\Phi'(r) = \phi(r)$ as in \eqref{eq:Phi_nonlinear}\\
			$\bX^t_i$ and $\bX^{[0,T]}$    &  position of the $i$-th particle at time $t$ and, resp., trajectory on $[0,T]$       \\
	                $\br_{ij}^t= \bX^t_i-\bX^t_j $ & position difference from particle $j$ to particle $i$ at time $t$\\
	                $\rho_t$  and $\wbar\rho_T $ & probability distribution of $|\br_{12}^t|$ and, resp., its average on [0,$T$] in \eqref{eq:rhoavg}  \\
	                $L^2(\rho_t)$ and $L^2(\wbar \rho_T)$ & the function spaces $L^2(\R^+,\rho_t)$ and $L^2(\R^+,\wbar \rho_T)$  \\
	               	$p_t(u,v)$   and $p(u,v)$        & the joint density of $(\br_{12}^t, \br_{13}^t)$  and, resp., the stationary density, as in \eqref{eq:r_invPDF} \\
			\bottomrule	
		\end{tabular}
	\end{center}
	\vspace{-3mm}
\end{table}

\section{The coercivity conditions and strictly positive integral operators} \label{sec:ccPD}
In the context of likelihood-based nonparametric inference of the interaction function, we show that the coercivity condition is sufficient for identifiability, and it is equivalent to the strict positive-definiteness of an integral operator. Also, we introduce a coercivity condition at a single time, which suggests that the interaction function can be identifiable from many samples at a single time.

 In vector format, we can write the system \eqref{eq:sys1st} as
 \begin{align} \label{eq:sys_grad}
 d\bX^t&= - \nabla \pV_{\intkernel}(\bX^t)dt +  d\bB^t
 \end{align}
 where $\bX^t:=(\bX_i^t)_{i=1}^{N} \in \R^{Nd}$, and the potential function $\pV_{\intkernel}:\R^{Nd} \to \R$ is
\begin{align} \label{eq:potential}
 \pV_{\phi}(\bx) = \frac{1}{2N} \sum_{i, j=1, j\neq i}^N \Phi(|\bx_i-\bx_j|), \quad \bx\in \R^{Nd}, \text{ with } \Phi'(r)=\intkernel(r).
\end{align}

Since the pairwise potential $\Phi$ in \eqref{eq:Phi_nonlinear} is $C^2(\R^+)$
and $\lim_{r\to \infty}\Phi(r)=\infty$, the drift term in \eqref{eq:sys_grad} is locally Lipschitz and the total potential $J_\phi$ is a Lyapunov function for the system. Thus, a global solution exists (see e.g. \cite[Theorem 1.3]{Khasminskii12}). For  the existence and properties of the solutions in systems with singular potentials, we refer to  \cite{albeverio2003_StrongFeller,liu2016propagation,li2019mean} and the reference therein.

\subsection{Identifiability and the coercivity condition} \label{sec:inference}
Consider the likelihood-based inference of the interaction function $\phi$ from observation data consisting of
many independent trajectories $\{\bX^{[0,T],m}\}_{m=1}^M$.
The maximum likelihood estimator (MLE) is a maximizer of the log-likelihood ratio of these trajectories over a hypothesis space $\hypspace$:
\begin{align*}
\widehat{\intkernel}_{\hypspace,M}:=\argmax{\intkernelvar\in \hypspace} \mathcal{E}^M(\intkernelvar), \quad   \text{ with } \mathcal{E}^M(\intkernelvar)  := \frac{1}{M}\sum_{m=1}^M  \mathcal{E}_{\bX^{[0,T],m}}(\intkernelvar)
\end{align*}
where $\mathcal{E}_{\bX^{[0,T],m}}(\intkernelvar)$ denotes the average log-likelihood ratio of the trajectory $\bX^{[0,T],m}$, which is given, by the Girsanov theorem (see e.g. \cite[Section 1.1.4]{Kut04} and \cite[Section 3.5]{KS98}), by
\begin{align}\label{lkhd_cts}
\mathcal{E}_{\bX^{[0,T]}}(\intkernelvar)&= - \frac{1}{2TN}\int_{0}^T \left(|\nabla \pV_{\intkernelvar}(\bX^t)|^2 dt + 2 \langle \nabla \pV_{\intkernelvar}(\bX^t), d\bX^t \rangle \right).
\end{align}
Note that $\mathcal{E}^M(\cdot)$ is a quadratic functional. When $\hypspace$ is a finite dimensional linear space, $\mathcal{E}^M(\cdot)$ becomes a quadratic function on $\hypspace$, and an estimator $\widehat{\intkernel}_{\hypspace,M}$ can be obtained by solving a least squares problem (which has multiple solutions if the matrix of the normal equations is singular.

A key assumption in the definition of MLE is the uniqueness of the maximizer of the log-likelihood ratio $\mathcal{E}^M(\cdot)$.   When $M\to\infty$, by the Law of Large Numbers,
\[
\E\mathcal{E}_{\bX^{[0,T]}}(\intkernelvar)= \lim_{M\to \infty} \mathcal{E}^M(\intkernelvar)\quad a.s.,
\]
and the uniqueness assumption leads to the identifiability in Definition \ref{def:identifiability}:  the interaction function is identifiable if it is the unique maximizer of the expectation of the log-likelihood ratio.

The coercivity condition in Definition \ref{def_coercivty} can be readily verified from data on any finite dimensional space $\hypspace$ and it provides guidance on the choice of basis functions for $\hypspace$. In fact, with the choice of an orthonormal basis for  $\hypspace \subset L^2(\wbar \rho_T)$, the coercivity constant $c_{\hypspace,T}$ provides a lower bound for the smallest eigenvalue of the normal matrix \cite{LMT19,LMT20}, thus ensuring the uniqueness of the estimator. More importantly, if the coercivity condition holds true, the estimator is proved to be consistent and converge at a rate (in $M$) equal to the minimax rate of nonparametric regression in $1$ dimension (see \cite[Theorem 5--6]{LMT19} and \cite[Theorem 3.1--3.2]{LMT20}), and the estimation errors can be, under possibly further assumptions, dimension independent (see \cite[Theorem 9]{LMT19}).

The next proposition shows that the coercivity condition is sufficient for the identifiability and it also becomes necessary when the number of particles in the system goes to infinity. This implies Theorem \ref{thm_mainAll}(a).

\begin{proposition}\label{propID_CC}
Consider the system  \eqref{eq:sys1st} on $[0,T]$ with interaction function $\phi$. Let $\wbar \rho_T$ be as defined in \eqref{eq:rhoavg}. Then, the interaction function is identifiable on a subspace $\hypspace\subset L^2(\wbar\rho_T)$ if and only if
  \begin{align}\label{eq:identifiabitly}
\frac{1}{T}\int_{0}^T   \E[ h(|\br_{12}^t|) h(|\br_{13}^t|)\frac{\langle \br_{12}^t,\br_{13}^t \rangle}{|\br_{12}^t||\br_{13}^t|} ](t)dt
 > -  \frac{1}{N-2} \|h\|_{L^2(\wbar \rho_T)}^2, \quad \text{for all } h\neq 0\in \hypspace.
 \end{align}
Thus, it is identifiable on a linear subspace $\hypspace\subset L^2(\wbar\rho_T)$ if the coercivity condition holds on $\hypspace$. Furthermore, when $N\to\infty$, the interaction function is identifiable on $L^2(\wbar \rho_T)$ if and only if the coercivity condition holds on  $L^2(\wbar \rho_T)$.
\end{proposition}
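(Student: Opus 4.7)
The plan is to turn identifiability into a quadratic inequality in $h\in\hypspace$ via the Girsanov formula, show that the resulting inequality is exactly \eqref{eq:identifiabitly}, and then compare it with the coercivity condition both for finite $N$ and in the limit $N\to\infty$.

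Writing a competitor as $\varphi=\phi+h$ with $h\in\hypspace$, linearity of $\phi\mapsto\nabla\pV_\phi$ gives $\nabla\pV_{\phi+h}-\nabla\pV_\phi=\nabla\pV_h$. Substituting the SDE $d\bX^t=-\nabla\pV_\phi(\bX^t)\,dt+d\bB^t$ into \eqref{lkhd_cts} together with the polarization identity $|\nabla\pV_{\phi+h}|^2-|\nabla\pV_\phi|^2-2\langle\nabla\pV_h,\nabla\pV_\phi\rangle=|\nabla\pV_h|^2$ yields
\begin{equation*}
\mathcal{E}_{\bX^{[0,T]}}(\phi+h)-\mathcal{E}_{\bX^{[0,T]}}(\phi) = -\frac{1}{2TN}\int_0^T |\nabla\pV_h(\bX^t)|^2\,dt \;-\; \frac{1}{TN}\int_0^T \langle \nabla\pV_h(\bX^t),\,d\bB^t\rangle.
\end{equation*}
Taking expectation annihilates the It\^o integral, so $\phi$ is the unique maximizer of $\E\mathcal{E}_{\bX^{[0,T]}}$ on $\phi+\hypspace$ if and only if $\int_0^T \E|\nabla\pV_h(\bX^t)|^2\,dt>0$ for every $h\in\hypspace\setminus\{0\}$.

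Next I would expand $|\nabla\pV_h|^2$ componentwise. From \eqref{eq:potential} with $\Phi'=h$, $\nabla_{\bx_i}\pV_h(\bx)=N^{-1}\sum_{j\neq i} h(|\br_{ij}|)\,\br_{ij}/|\br_{ij}|$, hence
\begin{equation*}
|\nabla\pV_h(\bx)|^2 = \frac{1}{N^2}\sum_{i=1}^N\sum_{j,k\neq i} h(|\br_{ij}|)\,h(|\br_{ik}|)\,\frac{\langle \br_{ij},\br_{ik}\rangle}{|\br_{ij}|\,|\br_{ik}|}.
\end{equation*}
Splitting into the diagonal piece $k=j$ and the off-diagonal piece $k\neq j$, taking expectation and invoking exchangeability of $\bX^t$ (so that each of the $N(N-1)$ diagonal terms equals $\E[h^2(|\br_{12}^t|)]$ and each of the $N(N-1)(N-2)$ off-diagonal terms equals the integrand of \eqref{eq:identifiabitly}) gives
\begin{equation*}
\frac{1}{T}\int_0^T \E|\nabla\pV_h(\bX^t)|^2\,dt \;=\; \frac{N-1}{N}\,\|h\|_{L^2(\wbar\rho_T)}^2 \;+\; \frac{(N-1)(N-2)}{N}\,I(h),
\end{equation*}
where $I(h)$ denotes the three-particle expectation on the left of \eqref{eq:identifiabitly}. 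Dividing by the positive factor $(N-1)/N$, the positivity condition of the previous paragraph becomes $\|h\|_{L^2(\wbar\rho_T)}^2 + (N-2)\,I(h) > 0$, which is \eqref{eq:identifiabitly} verbatim. Sufficiency of coercivity is then automatic: applying Definition \ref{def_coercivty} to the one-dimensional subspace $\mathrm{span}(h)$ gives $I(h)>0 > -\|h\|_{L^2(\wbar\rho_T)}^2/(N-2)$.

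For the limit $N\to\infty$, the coefficient $1/(N-2)$ vanishes, so identifiability on $L^2(\wbar\rho_T)$ reduces to the pointwise requirement $I(h)>0$ for every nonzero $h\in L^2(\wbar\rho_T)$. On any finite-dimensional subspace $\hypspace'\subset L^2(\wbar\rho_T)$, the map $h\mapsto I(h)$ is a continuous quadratic form, strictly positive on the compact unit sphere, hence attains a positive infimum which is exactly $c_{\hypspace',T}$; by Definition \ref{def_coercivty} this is coercivity on $L^2(\wbar\rho_T)$, and the converse direction is covered by the sufficiency already established. I expect the main technical point to be the careful combinatorial accounting of diagonal versus off-diagonal terms under exchangeability (and the integrability hypotheses needed to justify interchanging expectation, sum, and time integral); everything else is a routine application of It\^o calculus, the Girsanov theorem, and finite-dimensional compactness.
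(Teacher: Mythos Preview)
Your proposal is correct and follows essentially the same route as the paper: Girsanov plus completing the square to reduce identifiability to $\int_0^T\E|\nabla\pV_h(\bX^t)|^2\,dt>0$, then the exchangeability count splitting the $N(N-1)$ diagonal terms from the $N(N-1)(N-2)$ off-diagonal ones, yielding \eqref{eq:identifiabitly}. The only notable addition is that you make explicit the compactness argument (continuous quadratic form on the finite-dimensional unit sphere) needed to upgrade the pointwise condition $I(h)>0$ to a positive infimum $c_{\hypspace',T}>0$; the paper leaves this implicit.
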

 \begin{proof}
 Noting that $d\bX^t= - \nabla J_{\phi}(\bX^t)dt + d\bB^t$ and that $J_{\intkernelvar}$ is linear in $\intkernelvar$, we have
\begin{align*}
  & \int_0^T |\nabla \pV_{\intkernelvar}(\bX^t)|^2 dt + 2 \langle \nabla \pV_{\intkernelvar}(\bX^t), d\bX^t \rangle  \\
   = &\int_0^T |\nabla \pV_{\intkernelvar}(\bX^t)|^2 dt - 2 \langle \nabla \pV_{\intkernelvar}(\bX^t), \nabla J_{\phi}(\bX^t) \rangle dt +  \langle \nabla \pV_{\intkernelvar}(\bX^t), d\bB^t \rangle \\
  = &\int_0^T |\nabla \pV_{\intkernelvar -\phi}(\bX^t)|^2 dt -  |\nabla \pV_{\phi}(\bX^t)|^2 dt +  \langle \nabla \pV_{\intkernelvar}(\bX^t), d\bB^t \rangle,
\end{align*}
where the last inequality follows from completing the squares. Note first that from \eqref{lkhd_cts} with $\varphi= \phi$ we have $\E\mathcal{E}_{\bX^{[0,T]}}(\phi) = - \frac{1}{TN} \int_0^T  |\nabla \pV_{\phi}(\bX^t)|^2 dt$.  Then, the expectation of the negative log-likelihood ratio in \eqref{lkhd_cts}  is
\begin{align} \label{eq:Elkhd_cts}
\E\mathcal{E}_{\bX^{[0,T]}}(\intkernelvar)&=- \frac{1}{2TN}\int_{0}^T \E |\nabla \pV_{\intkernelvar -\phi}(\bX^t)|^2 dt -\E\mathcal{E}_{\bX^{[0,T]}}(\phi).
\end{align}
Thus, $\phi$ is the unique maximizer of $\E\mathcal{E}_{\bX^{[0,T]}}(\cdot)$ on $\hypspace$, i.e., is identifiable on $\hypspace$, if and only if
 \begin{equation}\label{eq:error_control}
 \frac{1}{TN}\int_{0}^T \E |\nabla \pV_{h}(\bX^t)|^2 dt >0
 \end{equation}
whenever $h=\intkernelvar -\phi\neq 0 $ in $\hypspace$.

Also, by exchangeability, with notation $\br_{ji}^t =\bX_j^t-\bX_{i}^t$,  we have
\begin{align*}
& \frac{1}{N} \E |\nabla \pV_{h}(\bX^t)|^2  = \sum_{i=1}^N\frac{1}{N^3} \sum_{\substack{ j,k=1,\\ j\neq i, k\neq i} }^N \underbrace{\E[ h(|\br_{ji}^t|) h(|\br_{ki}^t|)\frac{\langle \br_{ji}^t,\br_{ki}^t \rangle }{|\br_{ji}^t||\br_{ki}^t|} ]}_{I_{ijk}(t)}
= \frac{(N-1)[ (N-2)I_{123}+ I_{122}]}{N^2} ,
 \end{align*}
 where the equality follows from that $I_{ijk} = I_{123}$ for all triplets $\{(i,j,k), j\neq i, k\neq i, j\neq k\}$, contributing $N(N-1)(N-2)$ copies of $I_{123}$; and that $I_{ijk} = I_{122}$ for all for all triplets $\{(i,j,k), j=k\neq i\}$, contributing $N(N-1)$ copies of $I_{122}$.
 Therefore,
  \begin{align}\label{eq:ExpJh}
  \frac{1}{TN}\int_{0}^T \E |\nabla \pV_{h}(\bX^t)|^2 dt = \frac{(N-1)(N-2)}{N^2} \frac{1}{T}\int_{0}^T   I_{123}(t)dt + \frac{N-1}{N^2} \frac{1}{T}\int_{0}^T   I_{122}(t)dt.
  \end{align}
Then, Eq.\eqref{eq:identifiabitly} is equivalent to Eq.\eqref{eq:error_control} by noting that $\frac{1}{T}\int_0^T I_{122}(t) dt = \frac{1}{T}\int_0^T \E[h(|\br_{12}^t|)^2]dt = \|h\|_{L^2(\wbar \rho_T)}^2$.

 If the coercivity condition holds on $\hypspace$, i.e.  $
\frac{1}{T}\int_{0}^T   \E[ h(|\br_{12}^t|) h(|\br_{13}^t|)\frac{\langle \br_{12}^t,\br_{13}^t \rangle}{|\br_{12}^t||\br_{13}^t|} ](t)dt
 \geq c_{\hypspace,T} \|h\|_{L^2(\wbar\rho_T)}^2
$
 for all $h\in \hypspace$, so Eq.\eqref{eq:identifiabitly} holds and $\phi$ is identifiable on $\hypspace$.

 When $N\to \infty$, by \eqref{eq:Elkhd_cts}, the maximizer is unique iff (together with \eqref{eq:ExpJh})
 \begin{equation*}
 \lim_{N\to\infty} \frac{1}{TN}\int_{0}^T \E |\nabla \pV_{h}(\bX^t)|^2 dt  = \frac{1}{T}\int_{0}^T   \E[ h(|\br_{12}^t|) h(|\br_{13}^t|)\frac{\langle \br_{12}^t,\br_{13}^t \rangle}{|\br_{12}^t||\br_{13}^t|} ](t)dt >0,
 \end{equation*}
for each $h\neq 0 \in \hypspace$. Hence, for any finite dimensional linear subspace $\hypspace$, Eq.\eqref{eq:coercivityDef} holds. Thus, identifiability on $L^2(\wbar\rho_T)$ is equivalent to the coercivity condition on $L^2(\wbar\rho_T)$ when $N\to\infty$.
\end{proof}

\begin{remark}\label{rmk:compactOpterator}
When the related integral operator introduced below is compact and strictly positive (see Remark \ref{rmk_compact}), the coercivity constant  $c_{\mathcal{H},T}$ will converge to zero as the dimension of $\hypspace$ increases to infinity, because it is the smallest eigenvalue of the integral operator on  $\hypspace \subset L^2(\wbar\rho_T)$ . Thus, when performing nonparametric inference of the interaction function, even when the coercivity condition holds true on $L^2(\wbar\rho_T)$, regularization becomes necessary to control the condition number of the normal matrix when the dimension of the hypothesis space $\hypspace$ becomes large \cite{LZTM19,LMT19,LMT20}.
\end{remark}

\begin{remark}\label{rmk:differentCC}
The coercivity in Definition {\rm \ref{def_coercivty}} is sightly more restrictive than the one in the previous studies {\rm \cite{LZTM19,LMT19,LMT20}}, which is almost equivalent to the identifiability. More precisely, the definition in {\rm \cite[Definition 3.1]{LMT20}} says that the coercivity condition holds on a set $\hypspace$ if there is a positive constant $c_\hypspace$ such that
\begin{align}\label{gencoerc}
  c_{\hypspace} \|\intkernelvar\|_{L^2(\wbar\rho_T)}  \!\!\leq   \!\!\frac{1}{2NT}\int_0^T\sum_{i=1}^{N}\E | \nabla J_\varphi(\bX^t)\big|^2 dt.
 \end{align}
for all $\varphi\in \hypspace$. This is almost the identifiability in \eqref{eq:error_control}, except requiring a uniform bound $c_\hypspace$ over $\hypspace$. Definition {\rm \ref{def_coercivty} } is more restrictive than the previous one in the sense that its coercivity constant $c_{\hypspace,T}$ in \eqref{eq:coercivityDef} may be zero while the constant $c_\hypspace$ above is positive, as in \eqref{eq:relation_coerc_constnats} below.
 In fact, by \eqref{eq:ExpJh} with $h$ replaced by $\varphi$ and the fact that $\frac{1}{T}\int_0^T I_{122}(t) dt = \|\varphi\|_{L^2(\wbar \rho_T)}^2$, we can write \eqref{gencoerc} as
\begin{align*}
& c_{\hypspace} \|\intkernelvar\|_{L^2(\wbar\rho_T)}  \!\!\leq   \!\! \frac{(N-1)(N-2)}{2N^2} \frac{1}{T}\int_{0}^T   I_{123}(t)dt + \frac{N-1}{2N^2} \|h\|_{L^2(\wbar \rho_T)}^2 \\
\Longleftrightarrow \quad & ( c_{\hypspace}- \frac{N-1}{2N^2}) \|\intkernelvar\|_{L^2(\wbar\rho_T)} \leq   \frac{(N-1)(N-2)}{2N^2} \frac{1}{T}\int_{0}^T   I_{123}(t)dt,
\end{align*}
Combining with \eqref{eq:coercivityDef}, we obtain
\begin{equation}\label{eq:relation_coerc_constnats}
 c_{\hypspace}- \frac{N-1}{2N^2} \leq \frac{(N-1)(N-2)}{2N^2} c_{\hypspace, T}.
 \end{equation}
Hence, we can have $c_{\hypspace,T} \leq 0$ while having $0 < c_\hypspace \leq  \frac{(N-1)(N-2)}{2N^2} c_{\hypspace,T} + \frac{N-1}{2N^2} $.  When $N\to \infty$, we have $c_\hypspace \leq c_{\hypspace,T}$ and the two definitions are equivalent. Definition {\rm \ref{def_coercivty}} has the advantage of providing a coercivity constant independent of $N$. Also, by requiring a positive coercivity constant $c_{\hypspace,T}$ only on each finite-dimensional hypothesis space $\hypspace$, it can hold on infinite dimensional spaces such as $L^2(\wbar\rho_T)$ when $N\to\infty$, while the previous definition can not. Thus, the new definition is suitable for studying the identifiability of the interaction function in the mean-field equation.
\end{remark}

While numerically easy to verify, the above coercivity condition on a time interval is difficult to analyze directly, because it involves the average-in-time distribution $\wbar \rho_T$ which is complicated in general,
unless it is an invariant measure of the system, either when the system starts from the invariant measure, or when we consider the large time limit. The following single-time version of the coercivity condition can be analyzed directly, and involves only the single-time distribution $\rho_t$.

\begin{definition}[Coercivity condition at time $t$] \label{def_coercivty_t}
The dynamical system \eqref{eq:sys1st}  is said to satisfy the coercivity condition at time $t$ on a linear subspace $\mathcal{H}\subset L^2( \rho_t)$, where $\rho_t$ is defined in \eqref{eq:rhoavg}, if
\begin{equation} \label{eq:c_t}
c_{\mathcal{H}}(t) : = \quad \inf_{h\in  \mathcal{H}, \, \|h\|_{L^2(\rho_t)}=1} \E[h(|\br_{12}^t|)h(|\br_{13}^t|)\frac{\innerp{\br_{12}^t}{\br_{13}^t}}{|\br_{12}^t||\br_{13}^t|} ] >0,
\end{equation}
where $\br_{ij}^t = \bX^{t}_i- \bX^t_j$. If the coercivity condition holds true on every finite dimensional subspace $\mathcal{H} \subset L^2(\rho_t)$, we say the system satisfies the coercivity condition on $L^2(\rho_t)$ at time $t$.
\end{definition}

Similar to Proposition \ref{propID_CC}, if the coercivity condition holds on $\hypspace$ at $t_0$, then the interaction function can be identified on $\hypspace$ from a large size of samples at time $t_0$. This explains the observation in \cite{LZTM19, LMT19,LMT20} that the interaction function can be learned from multiple very short-time trajectories.


\subsection{Relation to strictly positive integral operators}
\label{sec:operator}
We show in this subsection that the coercivity condition on an interval $[0,T]$ (or at single-time $t$) discussed above is equivalent to the strict positivity of related integral operators on $L^2(\wbar\rho_T)$ (or $L^2(\rho_t)$).


Recall that a linear operator $Q$ on a Hilbert space $H$ is \emph{positive} if $\langle Q f,f\rangle \geq 0$ for any $f\in H$. It is said to be \emph{strictly positive} if  $\langle Q f,f\rangle> 0$ whenever $f\neq 0 \in \hypspace$. 
Hereafter, by an integral operator $Q$ with kernel $K$ on $L^2(\rho)$, we mean the bounded linear operator defined by
\[
[Qh](r) = \int K(r,s) h(s) \rho(s)ds
\]
for any $h\in L^2(\rho)$.

\begin{proposition}\label{prop.cc2PD}
The system \eqref{eq:sys1st} on $[0,T]$ satisfies the coercivity condition on $L^2( \wbar\rho_T)$  iff  the integral operator $\wbar Q_T$ on $L^2(\wbar \rho_T)$  with the integral kernel
\begin{align} \label{kernel_avg}
\widebar{K}_T(r,s): &= \frac{1}{\widebar{q}_T(r) \widebar{q}_T(s)} (rs)^{d-1}\frac{1}{T}\int_{0}^T\int_{S^{d-1}}\int_{S^{d-1}}\innerp{\xi}{\eta} p_t(r\xi,s\eta) d\xi d\eta dt
\end{align}
is strictly positive, where $ \widebar{q}_T(r)$ denotes the density of the measure $\widebar{\rho}_T$ and $p_t(u,v)$ denotes the density function of the random vector $(\br_{12}^t, \br_{13}^t)$.
\end{proposition}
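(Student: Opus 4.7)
The plan is to rewrite the quadratic form
\[
\frac{1}{T}\int_0^T \E\!\left[h(|\br_{12}^t|)h(|\br_{13}^t|)\frac{\langle\br_{12}^t,\br_{13}^t\rangle}{|\br_{12}^t||\br_{13}^t|}\right]dt
\]
appearing in \eqref{eq:coercivityDef} as the bilinear form $\langle\widebar Q_T h,h\rangle_{L^2(\widebar\rho_T)}$ of the integral operator with kernel $\widebar K_T$, and then translate between the positivity statements on the two sides.

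First, I would express the expectation as an integral against the joint density $p_t(u,v)$ of $(\br_{12}^t,\br_{13}^t)$ on $\R^d\times\R^d$ and then change to spherical coordinates $u=r\xi$, $v=s\eta$ with $r,s\in\R^+$ and $\xi,\eta\in S^{d-1}$. Two simplifications occur automatically: $\langle u,v\rangle/(|u||v|)=\langle\xi,\eta\rangle$, and the Jacobian gives $(rs)^{d-1}$. After averaging in time and applying Fubini, the form becomes
\[
\int_0^\infty\!\!\int_0^\infty h(r)h(s)\left[(rs)^{d-1}\frac{1}{T}\!\int_0^T\!\!\int_{S^{d-1}}\!\!\int_{S^{d-1}}\!\!\langle\xi,\eta\rangle p_t(r\xi,s\eta)\,d\xi\,d\eta\,dt\right]dr\,ds.
\]
Multiplying and dividing the bracket by $\widebar q_T(r)\widebar q_T(s)$ identifies it as $\widebar K_T(r,s)\widebar q_T(r)\widebar q_T(s)$ with $\widebar K_T$ exactly as in \eqref{kernel_avg}, so the form equals $\langle \widebar Q_T h, h\rangle_{L^2(\widebar\rho_T)}$ with the paper's convention $[\widebar Q_T h](r)=\int \widebar K_T(r,s)h(s)\widebar q_T(s)\,ds$.

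Next, I would argue that on any finite-dimensional subspace $\hypspace\subset L^2(\widebar\rho_T)$ the unit sphere is compact and the continuous map $h\mapsto\langle\widebar Q_T h,h\rangle$ attains its infimum, so $c_{\hypspace,T}>0$ is equivalent to $\langle\widebar Q_T h,h\rangle>0$ for every nonzero $h\in\hypspace$. Since the coercivity condition on $L^2(\widebar\rho_T)$ is defined (Definition \ref{def_coercivty}) as holding on every finite-dimensional subspace, and strict positivity of $\widebar Q_T$ is tested against every nonzero element of $L^2(\widebar\rho_T)$ (each spanning a one-dimensional subspace), the two conditions coincide.

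The one subtlety I expect concerns the well-definedness of the factor $1/(\widebar q_T(r)\widebar q_T(s))$ in $\widebar K_T$: outside $\supp(\widebar\rho_T)$ the numerator already vanishes, so $\widebar K_T$ should be read as a measurable kernel on $\supp(\widebar\rho_T)\times\supp(\widebar\rho_T)$, which is all that matters because $L^2(\widebar\rho_T)$ only sees values on that support. Beyond this point of bookkeeping, the proof demands no analytic estimates, just the change of variables above and finite-dimensional compactness.
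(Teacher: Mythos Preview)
Your proposal is correct and follows essentially the same approach as the paper: rewrite the expectation via the joint density $p_t$, pass to spherical coordinates to produce $\widebar K_T$, and identify the resulting quadratic form with $\langle\widebar Q_T h,h\rangle_{L^2(\widebar\rho_T)}$. Your added remarks on finite-dimensional compactness and on reading $\widebar K_T$ only on $\supp(\widebar\rho_T)$ are sensible elaborations the paper leaves implicit.
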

\begin{proof} By definition, we have
\begin{align}\label{eq:Qavg}
[\wbar Q_T h] (r)= \int \wbar{K}_T(r,s) h(s) \wbar \rho_T(s)ds.
\end{align}
Note first that  for any $h,g \in L^2(\wbar \rho_T)$, by a change of variable to the polar coordinates, we have
\begin{align}
&\frac{1}{T}\int_{0}^T  \E[h(|\br_{12}^t|)g(|\br_{13}^t|)\frac{\innerp{\br_{12}^t}{\br_{13}^t}}{|\br_{12}^t||\br_{13}^t|} ]dt
= \frac{1}{T}\int_{0}^T \int_{\R^d}\int_{\R^d} h(u)g(v) \frac{\innerp{u}{v}}{|u||v|} p_t(u,v)dudv dt \notag \\
 =& \frac{1}{T}\int_{0}^T \int_{\R^+} \int_{\R^+}   \int_{S^{d-1}}\int_{S^{d-1}} h(r)g(s) p(r\xi, s\eta) \innerp{\xi}{\eta}(rs)^{d-1}   d\xi d\eta drds dt \notag \\
 = & \int_{\R^+} \int_{\R^+} \widebar{K}_T(r,s) h(r) g(s)\widebar{\rho}_T(dr) \widebar{\rho}_T(ds) = \langle \wbar Q_T h, g\rangle_{L^2(\wbar \rho_T)}. \label{eq:Q&Exp}
\end{align}

Then, to show the equivalence between the strictly positive-definiteness of $\wbar Q_T$ and the coercivity condition on $L^2(\wbar\rho_T)$, it suffices to note that by the above equality, the coercivity constant in \eqref{eq:coercivityDef} satisfies
\[
c_{\hypspace, T} = \inf_{h\in  \mathcal{H}, \, \|h\|_{L^2(\wbar \rho_T)}=1} \langle \wbar Q_T h, h\rangle_{L^2(\wbar \rho_T)} >0,
\]
for any finite dimensional linear subspace $\mathcal{H}  \subset L^2(\wbar \rho_T)$.
\end{proof}

\begin{remark} \label{rmk_compact}
It follows from \eqref{eq:Q&Exp} that $\wbar Q_T$ is a symmetric bounded linear operator on $L^2(\wbar \rho_T)$:
\begin{align*}
\langle \wbar Q_T h, g\rangle_{L^2(\wbar \rho_T)} 
 & \leq  \frac{1}{T}\int_{0}^T \E[h(|\br_{12}^t|)g(|\br_{13}^t|)] dt
 \leq \|h\|_{L^2(\wbar \rho_T)} \|g\|_{L^2(\wbar \rho_T)},
\end{align*}
where the second inequality follows from the Cauchy-Schwarz inequality and the fact that $\frac{1}{T}\int_0^T \E[h(|\br_{12}^t|)^2]dt = \|h\|_{L^2(\wbar \rho_T)}^2$. When the integral kernel $\wbar K_T$ is in $L^2(\wbar \rho_T\otimes \wbar \rho_T)$, the operator $\wbar Q_T$ is compact and its eigen-functions  form an orthonormal basis of $L^2(\wbar \rho_T)$. 
Then, for any linear subspace $\hypspace\in L^2(\wbar\rho_T)$, the coercivity constant $c_{\hypspace,T}$ is the smallest eigenvalue of $\wbar Q_T$ on $\hypspace$.
\end{remark}

Similarly, we have the following proposition for the coercivity condition at a single time.
\begin{proposition}\label{prop.cc2PD_t}
The system \eqref{eq:sys1st} satisfies the coercivity conditions on $L^2(\rho_t)$ at time $t$ iff the integral operator $Q_t$ with  kernel
 \begin{equation} \label{eq:kernelK_t}
K_t(r, s) := \frac{1}{q_t(r) q_t(s)} (rs)^{d-1}\int_{S^{d-1}}\int_{S^{d-1}}\innerp{\xi}{\eta} p_t(r\xi,s\eta) d\xi d\eta.
 \end{equation}
 is strictly positive on $L^2(\rho_t)$, where $q_t$ denotes the density of the measure $\rho_t$ and $p_t(u,v)$ denotes the density function of the random vector $(\br_{12}^t, \br_{13}^t)$.
\end{proposition}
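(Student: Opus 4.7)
The plan is to mirror the proof of Proposition \ref{prop.cc2PD} step by step, removing the outer time integration since we are now working at a fixed time $t$ rather than averaging over $[0,T]$. The argument is essentially a change of variables to polar coordinates that converts the expectation appearing in the coercivity condition into a bilinear form built from the kernel $K_t$.

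First, for any $h,g \in L^2(\rho_t)$, I would write the expectation
\begin{equation*}
\E\!\left[h(|\br_{12}^t|)\,g(|\br_{13}^t|)\,\frac{\innerp{\br_{12}^t}{\br_{13}^t}}{|\br_{12}^t||\br_{13}^t|}\right]
=\int_{\R^d}\!\int_{\R^d} h(|u|)\,g(|v|)\,\frac{\innerp{u}{v}}{|u||v|}\,p_t(u,v)\,du\,dv,
\end{equation*}
using the definition of $p_t(u,v)$ as the joint density of $(\br_{12}^t,\br_{13}^t)$. Next, passing to polar coordinates $u=r\xi$, $v=s\eta$ with $r,s\in\R^+$ and $\xi,\eta\in S^{d-1}$, the Jacobian produces the factor $(rs)^{d-1}$, so that the right-hand side becomes
\begin{equation*}
\int_{\R^+}\!\int_{\R^+} h(r)\,g(s)\,(rs)^{d-1}\!\left[\int_{S^{d-1}}\!\int_{S^{d-1}}\innerp{\xi}{\eta}\,p_t(r\xi,s\eta)\,d\xi\,d\eta\right]\!dr\,ds.
\end{equation*}
Recognizing the bracketed quantity together with $(rs)^{d-1}$ as $K_t(r,s)\,q_t(r)\,q_t(s)$, where $q_t$ is the density of $\rho_t$, I rewrite this as
\begin{equation*}
\int_{\R^+}\!\int_{\R^+} K_t(r,s)\,h(r)\,g(s)\,\rho_t(dr)\,\rho_t(ds)=\inp{Q_t h,\,g}_{L^2(\rho_t)},
\end{equation*}
where the integral operator $Q_t$ is defined by $[Q_t h](r)=\int K_t(r,s)\,h(s)\,\rho_t(ds)$ in the sense of Section \ref{sec:operator}.

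Specializing to $g=h$ with $\|h\|_{L^2(\rho_t)}=1$ and taking the infimum over $h$ in a finite-dimensional subspace $\hypspace\subset L^2(\rho_t)$, the coercivity constant at time $t$ from \eqref{eq:c_t} becomes
\begin{equation*}
c_{\hypspace}(t)=\inf_{h\in\hypspace,\ \|h\|_{L^2(\rho_t)}=1}\inp{Q_t h,\,h}_{L^2(\rho_t)}.
\end{equation*}
The equivalence is then immediate: positivity $c_{\hypspace}(t)>0$ on every finite-dimensional subspace is precisely the statement that $Q_t$ is strictly positive on $L^2(\rho_t)$. I do not anticipate any serious obstacle; the only point that requires a brief justification is the legitimacy of interchanging the radial and angular integrations, which follows from Fubini once one observes that $|h(r)g(s)\innerp{\xi}{\eta}|\le |h(r)||g(s)|$ and $h,g\in L^2(\rho_t)$, so that the integrand is absolutely integrable with respect to $p_t(u,v)\,du\,dv$. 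No new technique beyond the argument already given for the time-averaged case in Proposition \ref{prop.cc2PD} is needed.
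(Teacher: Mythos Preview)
Your proposal is correct and follows essentially the same approach as the paper: expressing the expectation as an integral against $p_t$, changing to polar coordinates to identify it with $\langle Q_t h,h\rangle_{L^2(\rho_t)}$, and then reading off the equivalence from the definition of $c_{\hypspace}(t)$. The paper's proof is simply a terse one-line version of your argument (it states \eqref{Q_tExp} directly and concludes), so there is nothing substantively different.
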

\begin{proof}
Note that
\begin{align}\label{Q_tExp}
\E\left[h(|\br_{12}^t|)h(|\br_{13}^t|)\frac{\innerp{\br_{12}^t}{\br_{13}^t}}{|\br_{12}^t||\br_{13}^t|}\right]  =
\int_0^\infty \int_0^\infty h(r)h(s)  K_t(r,s)\rho_t(dr) \rho_t(ds) = \langle Q_t h, h\rangle_{L^2(\rho_t)}.
\end{align}
Then, $c_\hypspace(t) = \inf_{h\in  \mathcal{H}, \, \|h\|_{L^2(\rho_t)}=1} \langle Q_t h, h\rangle_{L^2(\rho_t)}$. Hence, $Q_t$ is strictly positive iff $c_\hypspace(t) >0$ for any finite dimensional linear subspace $\mathcal{H}  \subset L^2(\rho_t)$.
 \end{proof}




\bigskip

The positivity of $Q_t$ on $L^2(\rho_t)$ will be proved in later sections by showing that its integral kernel $K_t$ is strictly positive-definite, using the special structure of $p_t(u,v)$ (the density function of the random vector $(\br_{12}^t, \br_{13}^t)$). However, the positivity of $\wbar Q_T$ on on $L^2(\wbar \rho_T)$ can not be proved in the same way because the kernel $\widebar{K}_T$ in \eqref{kernel_avg} involves the average of $p_t(u,v)$.
The following proposition shows that, while $Q_t$ and $\wbar Q_T$ are defined on different spaces (unless $\rho_t$ is the same as $\wbar\rho_T$), the strict positivity of $Q_t$ for all $t\in [0,T]$ implies the positivity of $\wbar Q_T$.

\begin{proposition}\label{prop:Qavg_Qt}
The integral operator $\wbar Q_T$ on $L^2(\wbar\rho_T)$ with  kernel $\widebar{K}_T$ in  \eqref{kernel_avg}  is strictly positive on $\hypspace$
if the family of operators $ Q_t$ on $L^2(\rho_t)$  with kernel $K_t$ in \eqref{eq:kernelK_t} is strictly positive for each $t\in [0,T]$.
\end{proposition}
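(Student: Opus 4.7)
The plan is to express the quadratic form of $\wbar Q_T$ as a time average of the quadratic forms of the $Q_t$, and then use the strict positivity at each time, together with the fact that the $L^2(\wbar\rho_T)$-norm is itself a time average of the $L^2(\rho_t)$-norms, to draw the conclusion.

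First, for any $h\in L^2(\wbar\rho_T)$, combining the two identities already established in the proofs of Propositions \ref{prop.cc2PD} and \ref{prop.cc2PD_t}, namely \eqref{eq:Q&Exp} and \eqref{Q_tExp}, yields
\begin{equation}\label{eq:Qavg_as_avg_of_Qt}
\langle \wbar Q_T h, h\rangle_{L^2(\wbar\rho_T)}
\;=\; \frac{1}{T}\int_0^T \E\!\left[h(|\br_{12}^t|)h(|\br_{13}^t|)\frac{\langle\br_{12}^t,\br_{13}^t\rangle}{|\br_{12}^t||\br_{13}^t|}\right] dt
\;=\; \frac{1}{T}\int_0^T \langle Q_t h,h\rangle_{L^2(\rho_t)}\,dt.
\end{equation}
The second equality requires that $h$ lie in $L^2(\rho_t)$ for a.e.\ $t\in[0,T]$. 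This is automatic: by Fubini and the definition $\wbar\rho_T = \frac{1}{T}\int_0^T \rho_t\,dt$, one has
$\int_0^T \|h\|_{L^2(\rho_t)}^2\,dt = T\,\|h\|_{L^2(\wbar\rho_T)}^2 < \infty$,
so in particular $\rho_t \ll \wbar\rho_T$ for a.e.\ $t$ and $h$ defines an element of $L^2(\rho_t)$ for a.e.\ $t$, with finite norm.

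Second, fix a nonzero $h\in L^2(\wbar\rho_T)$. The Fubini identity above shows that the measurable set $E := \{t\in[0,T]: \|h\|_{L^2(\rho_t)}>0\}$ has strictly positive Lebesgue measure, for otherwise $\|h\|_{L^2(\wbar\rho_T)}=0$. For every $t\in E$, $h$ is a nonzero element of $L^2(\rho_t)$, so the strict positivity of $Q_t$ on $L^2(\rho_t)$ assumed in the hypothesis yields $\langle Q_t h,h\rangle_{L^2(\rho_t)}>0$. For $t\notin E$, $h=0$ in $L^2(\rho_t)$ and so $\langle Q_t h,h\rangle_{L^2(\rho_t)}=0$. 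The map $t\mapsto \langle Q_t h,h\rangle_{L^2(\rho_t)}$ is measurable because, by \eqref{Q_tExp}, it equals an expectation of a jointly measurable functional of $(\br_{12}^t,\br_{13}^t)$.

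Third, substituting into \eqref{eq:Qavg_as_avg_of_Qt} gives
\begin{equation*}
\langle \wbar Q_T h, h\rangle_{L^2(\wbar\rho_T)}
= \frac{1}{T}\int_E \langle Q_t h,h\rangle_{L^2(\rho_t)}\,dt \;>\; 0,
\end{equation*}
since the integrand is strictly positive on $E$ and $E$ has positive measure. This is the claimed strict positivity of $\wbar Q_T$. No serious obstacle is expected in this argument; the only mildly delicate point is the a.e.\ identification of $h\in L^2(\wbar\rho_T)$ with its image in $L^2(\rho_t)$, handled by the Fubini-type identity above.
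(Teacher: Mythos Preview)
Your proof is correct and follows the same approach as the paper's: both establish the identity $\langle \wbar Q_T h,h\rangle_{L^2(\wbar\rho_T)} = \frac{1}{T}\int_0^T \langle Q_t h,h\rangle_{L^2(\rho_t)}\,dt$ from \eqref{eq:Q&Exp} and \eqref{Q_tExp}, and then invoke the strict positivity of each $Q_t$. The paper's argument is terser, simply asserting $\langle Q_t h,h\rangle_{L^2(\rho_t)}>0$ for all $t$ whenever $h\neq 0$; your version is more careful in checking that $h\in L^2(\wbar\rho_T)$ restricts to $L^2(\rho_t)$ for a.e.\ $t$, and that $h$ is nonzero in $L^2(\rho_t)$ on a set of positive measure, which is a welcome refinement rather than a different route.
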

\begin{proof} Note that a combination of \eqref{eq:Q&Exp}  and \eqref{Q_tExp} leads to
\begin{equation}\label{eq:QT_Qt}
\langle \wbar Q_T  h, h\rangle_{L^2(\wbar\rho_T)}  = \frac{1}{T} \int_0^T \langle Q_t h, h\rangle_{L^2(\rho_t)} dt.
\end{equation}
For any $ h\neq 0$, we have $ \langle Q_t h, h\rangle_{L^2(\rho_t)}>0$ for all $t$ because $Q_t$ is strictly positive. Thus, $\langle \wbar Q_T  h, h\rangle_{L^2(\wbar\rho_T)} >0$. This implies that $\wbar Q_T$ is strictly positive on $L^2(\wbar\rho_T)$.
\end{proof}

\begin{remark}
If we let $\hypspace\subseteq C_b(\R^+)$, then $\wbar Q_T$ is strictly positive on $\hypspace$ if $ Q_t$  is non-negative for each $t\in [0,T]$ and strictly positive on $\hypspace$ for some $t_0\in [0,T]$, because the integrand $ g(t) = \langle Q_t h, h\rangle_{L^2(\rho_t)}$ in {\rm \eqref{eq:QT_Qt}} is continuous in time $t$ for each $h\in \hypspace$. Indeed, note that $\E[f(\bX^t)]$ is continuous in $t$ for any $f\in C_b(\R^{Nd},\R)$ because $\bX^t$ is a diffusion process whose density satisfies the Kolmogorov forward equation. Thus, for any function $h\in C_b{\R^+}$, the function  $f(\bX^t):=h(|\br_{12}^t|)h(|\br_{13}^t|)\frac{\innerp{\br_{12}^t}{\br_{13}^t}}{|\br_{12}^t||\br_{13}^t|}$ is continuous and bounded, so $ \langle Q_t h, h\rangle_{L^2(\rho_t)} = \E[f(\bX^t)] $ in \eqref{Q_tExp} is continuous in $t\in [0,T]$.
\end{remark}

\section{The case of linear systems} \label{sec:OU}
We prove in this section that the coercivity condition holds true for linear systems with general non-degenerate exchangeable Gaussian, covering Theorem \ref{thm_mainAll}(b). We start with a macro-micro decomposition (with the average position of all particles $\bX_c^t$ being the macro-state and with $\bY^t= \bX^t-\bX_c^t$ being the micro-state) to transform the system into a system of decentralized positions, which is ergodic. Then, we prove the coercivity condition when the initial distribution has a special structure similar to the covariance of the invariant measure (see Theorem \ref{thm_main_OU}), by using a series representation of the integral kernel and by a M\"untz-type theorem about polynomials with even degrees being dense in $L^2(\R^+,\mu)$ for a proper $\mu$.  Lastly, we prove the coercivity condition for general initial distributions by extending the arguments to non-radial interaction kernels (see Theorem \ref{thm:OU_non-radial}).

 \subsection{A macro-micro decomposition}
We first consider linear systems for which we have $\phi(r) = \thetaOU r$ with $\thetaOU>0$ (i.e., $\Phi(r)= \frac{1}{2}\thetaOU r^2$, a quadratic potential). The system \eqref{eq:sys1st} can be written as
\begin{align} \label{eq:OU}
 d\bX^t&= -\thetaOU \btheta \bX^tdt +   d\bB^t,
 \end{align}
 where the matrix $\btheta\in \R^{Nd\times Nd}$ is given by (with $I_d$ being the identity matrix on $\R^d$)
 \begin{equation} \label{matA}
\btheta = \frac{1}{N}
 \begin{pmatrix}
  (N-1)I_d& -I_d & \cdots & -I_d  \\
-I_d  & (N-1)I_d & \cdots &-I_d \\
  \vdots  & \vdots  & \ddots & \vdots  \\
-I_d  & -I_d & \cdots & (N-1)I_d
 \end{pmatrix}.
 \end{equation}
It is straightforward to see that $\btheta^2= \btheta$, and that the matrix $\btheta$ has eigenvalue $1$ of multiplicity $(N-1)d$ and eigenvalue $0$ of multiplicity $d$ (with a null space $\{\bx=c (\mathbf{v,v,\cdots, v}): c\in \R, \mathbf{v}\in \R^d\}$).

By a macro-micro decomposition of the system as in   \cite{malrieu2003convergence,cattiaux2018stochastic},
the next lemma shows that the center of the particles moves like a Brownian motion, and the particles concentrate around the center with a distribution close to Gaussian.
\begin{lemma}\label{leamm:Linear}
(i)
The solution $\bX^t$ of Eq.\eqref{eq:OU} can be explicitly written as
\begin{equation}\label{eq:Xt_int_Xc}
\bX^t = e^{-\thetaOU t} \btheta \bX^0 +  \int_0^t  e^{-\thetaOU (t-s)}\btheta d\bB^s + \bX_c^t,
\end{equation}
where $\bX_c^t= (\mathbf{v^t,v^t,\cdots, v^t})'$ with $\mathbf{v^t}:=\frac{1}{N}\sum_{i=1}^N \bX_i^t= \frac{1}{N}\sum_{i=1}^N (\bX_i^0+\bB_i^t)$.\\
(ii) Conditional on $\bX^0$, the centralized process
\[ \bY^t= \bX^t- \bX_c^t
\]
 is an Ornstein-Uhlenbeck process with distribution $\mathcal{N}\left(e^{-\thetaOU t} \btheta\bX^0, \frac{1}{2\thetaOU}(1-e^{-\thetaOU t})\btheta \right)$ for each $t$. In particular, if $\bX^0$ is Gaussian with variance $\mbf\Sigma$, then for each $t$, $\bY^t$ has a  distribution $\mathcal{N}\left(\mathbf{0},e^{-2\theta t}\btheta\mbf\Sigma \btheta+ \frac{1}{2\thetaOU}(1-e^{-\thetaOU t})\btheta \right)$.
\end{lemma}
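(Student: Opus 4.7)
The proof plan rests on exploiting that $\btheta$ in \eqref{matA} is an orthogonal projection: one checks directly from \eqref{matA} that $\btheta$ is symmetric and $\btheta^2=\btheta$, with null space $V_0=\{(\mathbf{v},\ldots,\mathbf{v})^\top:\mathbf{v}\in\R^d\}$ and range $V_0^\perp$. The matrix $I-\btheta = \frac{1}{N}J$ (where $J$ is the $N\times N$ block matrix of $I_d$'s) is then the complementary projection onto $V_0$. The natural decomposition is therefore $\bX^t = \btheta\bX^t + (I-\btheta)\bX^t$, and I would show that these two pieces are exactly $\bY^t$ and $\bX_c^t$ respectively.

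For the center-of-mass piece, I would apply $I-\btheta$ to both sides of \eqref{eq:OU} and use $(I-\btheta)\btheta = 0$ to obtain $d((I-\btheta)\bX^t) = (I-\btheta)\,d\bB^t$, so $(I-\btheta)\bX^t$ is just a (block-constant) Brownian motion started at $(I-\btheta)\bX^0$. A short calculation using the explicit form of $I-\btheta$ shows $(I-\btheta)\bx = (\mathbf{v},\ldots,\mathbf{v})^\top$ with $\mathbf{v}=\frac{1}{N}\sum_i \bx_i$, so $(I-\btheta)\bX^t=\bX_c^t$ in the notation of the lemma. Summing the scalar SDEs (or equivalently observing that each row-sum of $\btheta$ is zero, so $\sum_i(\btheta\bX^s)_i=0$) then yields $\mathbf{v}^t = \frac{1}{N}\sum_i(\bX_i^0+\bB_i^t)$, which is exactly the asserted formula for $\bX_c^t$.

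For the centralized piece, I would apply $\btheta$ to \eqref{eq:OU} and use $\btheta^2=\btheta$ to get $d\bY^t = -\thetaOU\,\bY^t\,dt + \btheta\,d\bB^t$ with $\bY^0=\btheta\bX^0$; this is a standard vector-valued Ornstein--Uhlenbeck equation whose variation-of-constants solution is $\bY^t = e^{-\thetaOU t}\btheta\bX^0 + \int_0^t e^{-\thetaOU(t-s)}\btheta\,d\bB^s$. Summing with $\bX_c^t$ recovers \eqref{eq:Xt_int_Xc}, completing part (i). For part (ii), the conditional Gaussianity given $\bX^0$ is immediate from the fact that $\bY^t$ is a deterministic linear transformation of $\bX^0$ plus a Wiener integral; the conditional mean is $e^{-\thetaOU t}\btheta\bX^0$, and the conditional covariance is computed by the Itô isometry as $\int_0^t e^{-2\thetaOU(t-s)}\btheta\btheta^\top ds$, which collapses via $\btheta^2=\btheta$ and symmetry to a scalar multiple of $\btheta$. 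The unconditional covariance in the Gaussian case follows by adding the independent contributions from $\bX^0\sim\mathcal{N}(0,\bSigma)$ and the Wiener integral, using $\btheta^\top=\btheta$ to write $\btheta\bSigma\btheta$.

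There is no serious obstacle: everything reduces to the projection identities $\btheta^2=\btheta$, $\btheta(I-\btheta)=0$, and the standard OU variation-of-constants and Itô isometry computations. The only minor care point is keeping track of the symmetry $\btheta^\top=\btheta$ when transposing products to reach the stated covariance forms, and matching the scalar time factors produced by $\int_0^t e^{-2\thetaOU(t-s)}ds$ with those written in the lemma.
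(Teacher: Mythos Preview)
Your proposal is correct and follows essentially the same route as the paper: identify $\bY^t=\btheta\bX^t$, use $\btheta^2=\btheta$ to derive the OU equation $d\bY^t=-\thetaOU\bY^t\,dt+\btheta\,d\bB^t$, solve by variation of constants, and read off the Gaussian law via It\^o isometry. The only cosmetic difference is that you frame the decomposition explicitly as the projection splitting $\bX^t=\btheta\bX^t+(I-\btheta)\bX^t$, whereas the paper first computes $\mathbf{v}^t$ by summing the scalar SDEs and then observes $\bY^t=\btheta\bX^t$; the paper also obtains $\E[\bY^t]=\mathbf{0}$ from exchangeability of $\bX^0$ rather than assuming $\bX^0$ is centered.
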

\begin{proof}
The fact that $\mathbf{v^t}:=\frac{1}{N}\sum_{i=1}^N \bX_i^t = \frac{1}{N}\sum_{i=1}^N (\bX_i^0+\bB_i^t)$ follows directly from the equation
\[d\mathbf{v^t} = \frac{1}{N}\sum_{i=1}^N d\bX_i^t = \frac{1}{N}\sum_{i=1}^N d\bB_i^t.\]
Next, note that $\bY^t= \bX^t- \bX_c^t = \btheta\bX^t$ and
\[
d\bY^t = \btheta d\bX^t = -\thetaOU \btheta^2 \bX^tdt +  \btheta d\bB^t = -\thetaOU \bY^tdt +  \btheta d\bB^t , \]
where we used $\btheta^2 =\btheta $ in the third equality.
Therefore, $(\bY^t)$ is an Ornstein-Uhlenbeck process
\[
\bY^t = e^{-\thetaOU t}\bY^0 +  \int_0^t e^{-\thetaOU (t-s)}\btheta d\bB^s.
\]
Therefore, conditional on $\bX^0$, with $\bY^0=\btheta \bX^0$ and $\btheta^2=\btheta$, we have that  the distribution of $\bY^t$ is $\mathcal{N}\left(e^{-\thetaOU t} \btheta\bX^0, \frac{1}{2\thetaOU}(1-e^{-\thetaOU t})\btheta \right)$ and that $\bX^t= \bX_c^t+ \bY^t$ can be written as in \eqref{eq:Xt_int_Xc}.

 If the initial distribution $\bX^0$ is exchangeable, then $\E[\bY^0]=\btheta\E[\bX^0]=\mathbf{0}$ because $\E[\bX_i^0]=\E[\bX_j^0]$ for any $(i,j)$. Thus, if  $\bX^0$ is Gaussian and exchangeable, then $\bY^t$ is Gaussian with mean $\mathbf{0}$. The variance of $\bY^t$ follows directly from the above integral representation.
\end{proof}


\subsection{Coercivity condition for linear systems}
We begin with a technical lemma for generic Gaussian random vectors. We denote by ${\rm cov}(X,Y)$ the covariance of $X$ and $Y$, with the convention that ${\rm cov}(X)= {\rm cov}(X,X)$.

\begin{lemma}\label{SP_Gaussian}
Let $(X, Y, Z)$ be a Gaussian vector in $ \R^{3d}$ with an exchangeable  joint distribution and $\cov(X)-\cov(X,Y) =\lambdaG I_d$ for some $\lambdaG>0$. Denote $ p^\lambdaG(u,v)$ the joint distribution of $(X-Y, X-Z)$ and denote $\rho^\lambdaG$ the distribution (and $q^\lambdaG(r)$ the density function) of $|X-Y|$.
 Then
 \begin{itemize}
  \item[(i)] The function $K^\lambdaG(r,s):\R^+\times \R^+\to\R$ defined by
  \begin{equation} \label{kernelK_gauss}
K^\lambdaG(r,s) := \frac{1}{q^\lambdaG(r)q^\lambdaG(s)} (rs)^{d-1}\int_{S^{d-1}}\int_{S^{d-1}}\innerp{\xi}{\eta} p^\lambdaG(r\xi,s\eta) d\xi d\eta
 \end{equation}
 is  uniformly bounded and is in $L^2(\rho^\lambdaG\otimes\rho^\lambdaG)$.
\item[(ii)] The integral operator $Q^\lambdaG$ with kernel $K^\lambdaG $ is strictly positive on $L^2(\rho^\lambdaG)$, i.e., for any $0\neq h\in L^2(\rho^\lambdaG)$,
\begin{equation}  \label{eq:Q_expSPD}
\langle Q^\lambdaG h, h\rangle_{L^2(\rho^\lambdaG)} =   \E\left[h(|X-Y|)h(|X-Z|)\frac{\langle X-Y,X-Z\rangle}{|X-Y||X-Z|}\right] >0.
\end{equation}
 \end{itemize}
\end{lemma}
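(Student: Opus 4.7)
The plan is to exploit the jointly Gaussian structure of $(X-Y, X-Z)$ to obtain an explicit series representation of $K^\lambda$, and then use a Müntz-type density result to conclude strict positivity of the associated operator.

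First, I would derive the joint law of $(X-Y,X-Z)$. Exchangeability gives $\E[X]=\E[Y]=\E[Z]$ (WLOG zero), $\cov(X)=\cov(Y)=\cov(Z)=:\Sigma$, and $\cov(X,Y)=\cov(X,Z)=\cov(Y,Z)=:C$ with $C$ symmetric, and by hypothesis $\Sigma-C=\lambda I_d$. A direct bilinear computation then gives $\cov(X-Y)=\cov(X-Z)=2\lambda I_d$ and $\cov(X-Y,X-Z)=\lambda I_d$. Hence $(X-Y,X-Z)$ is centered Gaussian with covariance $\lambda\begin{pmatrix}2I_d & I_d\\ I_d & 2I_d\end{pmatrix}$, yielding the explicit density
\[
p^\lambda(u,v)=\frac{1}{(2\pi)^d(3\lambda^2)^{d/2}}\exp\!\Bigl(-\tfrac{1}{3\lambda}\bigl(|u|^2-\langle u,v\rangle+|v|^2\bigr)\Bigr),
\]
together with $q^\lambda(r)=\omega_{d-1}(4\pi\lambda)^{-d/2}\,r^{d-1}e^{-r^2/(4\lambda)}$.

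Substituting into the definition of $K^\lambda$ and collecting the quadratic exponentials, the radial and angular parts decouple to give
\[
K^\lambda(r,s)=C_{d}\,e^{-(r^2+s^2)/(12\lambda)}\,I\!\Bigl(\tfrac{rs}{3\lambda}\Bigr),\qquad I(t):=\int_{S^{d-1}\times S^{d-1}}\!\!\langle\xi,\eta\rangle\,e^{t\langle\xi,\eta\rangle}\,d\xi\,d\eta.
\]
Rotational invariance reduces $I$ to a one-dimensional integral on the sphere; expanding $e^{t\langle\xi,\eta\rangle}=\sum_k t^k\langle\xi,\eta\rangle^k/k!$ and using the symmetry $\xi\mapsto-\xi$ of the uniform measure on $S^{d-1}$, the moments $\int\langle\xi,\eta\rangle^{k+1}d\xi d\eta$ vanish for even $k$ and are strictly positive for odd $k$. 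Hence
\[
K^\lambda(r,s)=e^{-(r^2+s^2)/(12\lambda)}\sum_{j=1}^\infty \alpha_j\,(rs)^{2j-1},\qquad \alpha_j>0.
\]
For Part (i), I would estimate $I(t)$ via Laplace's method, which shows $I(t)=O(t^{-(d-1)/2}e^t)$ as $t\to\infty$. Combined with the Gaussian weight $q^\lambda(r)q^\lambda(s)$ and the prefactor $e^{-(r^2+s^2)/(12\lambda)}$, the integrand in $\iint K^\lambda(r,s)^2\,q^\lambda(r)q^\lambda(s)\,dr\,ds$ is dominated by $e^{-Q(r,s)/\lambda}$ times a polynomial, where the quadratic form $Q$ is positive definite (eigenvalues $9$ and $1$ in suitable units). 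The corresponding bound on $K^\lambda$ itself is obtained similarly from the series, completing Part (i).

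For Part (ii), the series expansion gives
\[
\langle Q^\lambda h,h\rangle_{L^2(\rho^\lambda)}=\sum_{j=1}^\infty \alpha_j\!\left(\int_0^\infty r^{2j-1}\,e^{-r^2/(12\lambda)}\,h(r)\,\rho^\lambda(dr)\right)^{\!2}\ge 0,
\]
a sum of squares whose vanishing would force each integral to be zero. Writing $\mu(dr):=e^{-r^2/(12\lambda)}\rho^\lambda(dr)$, whose density is proportional to $r^{d-1}e^{-r^2/(3\lambda)}$, and $\tilde h(r):=r\,h(r)$, the condition becomes $\int_0^\infty r^{2(j-1)}\tilde h(r)\,\mu(dr)=0$ for all $j\ge 1$, i.e.\ all even moments of $\tilde h$ against $\mu$ vanish. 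The main obstacle will be the density step: I need to invoke a Müntz-type theorem on $\R^+$ (the one promised in Section \ref{sec:OU}) to conclude that the even polynomials $\{r^{2k}\}_{k\ge 0}$ are dense in $L^2(\R^+,\mu)$. The Gaussian tail of $\mu$ furnishes Carleman's condition for the Hamburger moment problem, which is the standard route; alternatively one can use the explicit Müntz exponent count. Density then yields $\tilde h\equiv 0$ in $L^2(\mu)$, whence $h=0$ in $L^2(\rho^\lambda)$, giving strict positivity of $Q^\lambda$.
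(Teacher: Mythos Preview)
Your proposal is correct and follows essentially the same route as the paper: explicit computation of the Gaussian law of $(X-Y,X-Z)$, the resulting series expansion of $K^\lambda$ via Taylor expansion of $e^{c_\lambda rs\langle\xi,\eta\rangle}$ with the odd angular moments vanishing, and then the M\"untz-type density of even polynomials in the weighted $L^2$ space to force $h=0$. The only notable difference is in Part~(i), where the paper bypasses Laplace's method by the one-line bound $\langle\xi,\eta\rangle e^{c_\lambda rs\langle\xi,\eta\rangle}\le e^{c_\lambda rs}$, which immediately yields $K^\lambda(r,s)\le C_d\,e^{-(r^2+s^2)/(12\lambda)+rs/(3\lambda)}$ and hence the $L^2(\rho^\lambda\otimes\rho^\lambda)$ bound after multiplying by $q^\lambda(r)q^\lambda(s)$.
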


\begin{proof}
We start with explicit expressions for  $p^\lambdaG(u,v)$, $\rho^\lambdaG(r)$ and $K^\lambdaG(r,s)$.
By exchangeability, the random vector $(X-Y, X-Z)$ is centered Gaussian with covariance matrix $\lambdaG\begin{pmatrix}
 2  I_d      & I_d\\
I_d         & 2I_d
 \end{pmatrix}$, whose inverse is $\frac{1}{3\lambdaG}\begin{pmatrix}
 2I_d    & -I_d\\
   -I_d   & 2I_d
 \end{pmatrix}$. Thus, the joint distribution is
 \[ p^\lambdaG(u,v) = (2\sqrt{3}\pi \lambdaG)^{-d} e^{-c_\lambdaG (|u|^2+|v|^2- \inp{u,v}) }, \quad \text{with } c_\lambdaG = \frac{1}{3\lambdaG},
 \]
 and a direct computation yields that the density of $|X-Y|$ is
 \[
 q^\lambdaG(r)= \frac{1}{C_\lambdaG} r^{d-1}e^{-\frac{r^2}{4\lambdaG}} \mathbf{1}_{r\geq 0}, \quad \text{ with } C_\lambdaG= \frac{1}{2} (4\lambdaG)^{\frac{d}{2}}\Gamma(\frac{d}{2}),
 \]
where the constant $\Gamma(\frac{d}{2})$ comes from the surface area of the unit sphere in $\R^d$:  $|S^{d-1}|= \frac{2\pi^{d/2}}{\Gamma(\frac{d}{2})}$.

  Then,  the integral kernel in \eqref{kernelK_gauss} can be written as
\begin{align*}
 K^\lambdaG(r,s) &= C_d e^{-\frac{1}{12\lambda}(r^2+s^2)}\int_{S^{d-1}}\int_{S^{d-1}}\inp{\xi,\eta} e^{c_\lambda rs\inp{\xi,\eta}} \frac{d\xi d\eta}{|S^{d-1}|^2}, \quad \text{with } C_d= (\frac{\sqrt{3}}{2})^{-d}.
 \end{align*}
 Here when $d=1$, the above spherical measure on $S^0= \{-1,1\}$ is interpreted as $\mathbb{P}(\xi=1) =\mathbb{P}(\xi=-1) = \frac{1}{2}$, or equivalently, $\int_{S^{d-1}}\int_{S^{d-1}}\inp{\xi,\eta} e^{\frac{1}{3\lambdaG}(rs\inp{\xi,\eta})} \frac{d\xi d\eta}{|S^{d-1}|^2} = \frac{1}{2}(e^{c_\lambda rs} -e^{-c_\lambda rs})$.

To prove (i), note that $\inp{\xi,\eta}  e^{c_\lambdaG rs\inp{\xi,\eta}} \leq e^{c_\lambda rs}$. Then,
\[K^\lambdaG(r,s) \leq C_d e^{-\frac{1}{12\lambda}(r^2+s^2) + c_\lambda rs},\]
 and it follows that $K^\lambdaG(r,s)$ is uniformly bounded above and is in $L^2(\rho^\lambdaG\otimes\rho^\lambdaG)$.

 To prove (ii), we first represent $K^\lambdaG(r,s)$ in terms a series of polynomials and then apply a M\"untz-type theorem.
Note that by  Taylor expansion,
$
\inp{\xi,\eta}  e^{c_\lambdaG rs\inp{\xi,\eta}}
=\sum_{k=1}^\infty \frac{1}{(k-1)!} c_\lambdaG^{k-1} (rs)^{k-1}  \inp{\xi,\eta}^k,
$ and that
\[
b_k=\int_{S^{d-1}}\int_{S^{d-1}} \inp{\xi,\eta}^k  \frac{d\xi d\eta}{|S^{d-1}|^2} \left\{
                \begin{array}{ll}
                  = 0, & \textrm{ for odd } k, \\
                  \in (0,1),   & \textrm{ for even } k,
                \end{array}
              \right.
 \]
 due to symmetry. We have
\begin{align*}
 K^\lambdaG(r,s)= C_d e^{-\frac{1}{12\lambda}(r^2+s^2)} \sum_{k=0}^\infty \frac{1}{k!} c_\lambdaG^k b_{k+1} (rs)^{k} =C_d e^{-\frac{1}{12\lambda}(r^2+s^2)} \sum_{k=1,k \text{ odd}}^\infty \frac{1}{k!} c_\lambdaG^k b_{k+1} (rs)^{k-1}.
  \end{align*}
Then, for any $h\in L^2(\rho^\lambdaG)$,  we have
\begin{align*}
 \langle Q^\lambdaG h, h\rangle_{L^2(\rho^\lambdaG)}  & = \int_0^\infty h(r)h(s)K^\lambdaG(r,s)  \rho^\lambdaG(r)\rho^\lambdaG(s)drds \\
& = C_d \sum_{k=1, k\text{ odd} }^\infty \frac{1}{k!} c_\lambdaG^k b_{k+1} \left(\int_0^\infty  h(r) r^{k-1} e^{-\frac{1}{12\lambda}r^2} \rho^\lambdaG(r)dr \right)^2\geq 0.
 \end{align*}
 Note that
 \[ \int_0^\infty  h(r) r^{k-1} e^{-\frac{1}{12\lambda}r^2} \rho^\lambdaG(r)dr  = C_\lambdaG^{-1} \int_0^\infty  h(r) r^{k+d-2}e^{-\frac{1}{3\lambda}r^2} dr.  \]
By Lemma \ref{lemma:Muntz}, a variation of the M\"untz Theorem,  the space ${\rm span}\{1,r^2, r^4, r^6,\cdots\}$ is dense in $L^2(r^{d-1}e^{-\frac{1}{3\lambda}r^2})$.
Thus, $\langle Q^\lambdaG h, h\rangle_{L^2(\rho^\lambdaG)}=0$ only if $h \equiv 0$, and $Q^\lambdaG$ is strictly positive.  Eq.~\eqref{eq:Q_expSPD} follows as in Eq.\eqref{eq:Q&Exp}.
\end{proof}

\bigskip
The next theorem implies Theorem \ref{thm_mainAll}(b) under the additional assumption that the initial distribution of $\bX^0$ is exchangeable Gaussian with a covariance satisfying $\cov(\bX_i^0)-\cov(\bX_i^0,\bX_j^0)=\lambda_0 I_d$ for any $1\leq i<j\leq N$.
Intuitively, we may decompose each component of $\bX^0$ as the sum of a common variable and an independent variable, i.e., $\bX_i^0 = Z_i+ W$, where $\{Z_i\}_{i=1}^N$ are i.i.d. $\mathcal{N}(0,\lambda_0 I_d)$ and $W$ is a common Gaussian random variable, and this implies that the  particles are initially scattered randomly around a random position.

\begin{theorem} \label{thm_main_OU}
Suppose the linear system \eqref{eq:OU} starts from $\bX^0$ whose distribution is exchangeable Gaussian with covariance satisfying $\cov(\bX_1^0)-\cov(\bX_1^0,\bX_2^0)=\lambda_0 I_d$ for some $\lambda_0>0$. Then, $\mathrm{(i)}$ the coercivity condition holds  on $L^2(\rho_t)$ at each time $t\in [0,T]$ as in Definition {\rm \ref{def_coercivty_t}};
$\mathrm{(ii)}$ the coercivity condition holds on $L^2(\wbar \rho_T)$ on $[0,T]$ as in Definition \rm{\ref{def_coercivty}}.
\end{theorem}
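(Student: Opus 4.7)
My plan is to reduce everything to Lemma \ref{SP_Gaussian} via the macro-micro decomposition of Lemma \ref{leamm:Linear}. Concretely, I would verify that at every $t\in[0,T]$ the centered triple $(\bY_1^t,\bY_2^t,\bY_3^t)$ is an exchangeable Gaussian with exactly the block structure $\cov(X)-\cov(X,Y) = \lambda\, I_d$ required by that lemma, and then transfer the resulting strict positivity of $Q_t$ to $\wbar Q_T$ via Proposition \ref{prop:Qavg_Qt}.

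First I would observe that since the macro component $\bX_c^t$ is common to all particles, $\br_{ij}^t = \bX_i^t - \bX_j^t = \bY_i^t - \bY_j^t$. Hence the joint density $p_t(u,v)$ of $(\br_{12}^t,\br_{13}^t)$, the marginal $\rho_t$, and the kernel $K_t$ of Proposition \ref{prop.cc2PD_t} all depend only on the Gaussian law of $(\bY_1^t,\bY_2^t,\bY_3^t)$, which is exchangeable by exchangeability of $\bX^0$ and the permutation symmetry of the dynamics.

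Next I would compute $\cov(\bY^t)$ explicitly. Writing the exchangeable-Gaussian covariance of $\bX^0$ as $\bSigma = \lambda_0\, I_{Nd} + J_N \otimes B$, where $J_N$ is the $N\times N$ all-ones matrix and $B$ is the common off-diagonal $d\times d$ block (so that the hypothesis $\cov(\bX_1^0)-\cov(\bX_1^0,\bX_2^0) = \lambda_0 I_d$ is built in), I would use $\btheta^2 = \btheta$ together with the projection identity $\btheta(J_N\otimes B) = 0$ (since $(J_N\otimes I_d)(J_N\otimes B) = N\,J_N\otimes B$) to obtain $\btheta\bSigma\btheta = \lambda_0\btheta$. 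By Lemma \ref{leamm:Linear}(ii) this gives
\begin{equation*}
\cov(\bY^t) \;=\; \lambda(t)\,\btheta, \qquad \lambda(t) \,:=\, e^{-2\theta t}\lambda_0 + \tfrac{1}{2\theta}\bigl(1 - e^{-\theta t}\bigr) \;>\; 0 .
\end{equation*}
Reading off the $d\times d$ blocks of $\btheta$, this yields $\cov(\bY_1^t) - \cov(\bY_1^t,\bY_2^t) = \lambda(t)\,I_d$, so Lemma \ref{SP_Gaussian} applies with parameter $\lambda(t)$.

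As a consequence $K_t = K^{\lambda(t)}$ and $Q_t = Q^{\lambda(t)}$ is strictly positive on $L^2(\rho_t)$, so Proposition \ref{prop.cc2PD_t} finishes part (i). For part (ii), Proposition \ref{prop:Qavg_Qt} together with \eqref{eq:QT_Qt} yields $\langle \wbar Q_T h, h\rangle_{L^2(\wbar\rho_T)} > 0$ for any nonzero $h \in L^2(\wbar\rho_T)$, since Fubini gives $\int_0^T \|h\|_{L^2(\rho_t)}^2\,dt = T\|h\|_{L^2(\wbar\rho_T)}^2 > 0$ and hence $\langle Q_t h,h\rangle_{L^2(\rho_t)} > 0$ on a set of $t$ of positive measure; Proposition \ref{prop.cc2PD} then delivers the coercivity condition on $L^2(\wbar\rho_T)$. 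The main obstacle is really the covariance algebra: recognizing that the hypothesis $\cov(\bX_1^0)-\cov(\bX_1^0,\bX_2^0) = \lambda_0 I_d$ is precisely tailored to produce the cancellation $\btheta(J_N\otimes B) = 0$ and thereby preserve the ``scalar-diagonal-minus-off-diagonal'' structure through the linear flow; once this is noted, everything downstream is a direct appeal to the machinery already developed in Section \ref{sec:ccPD}.
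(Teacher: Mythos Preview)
Your proposal is correct and follows essentially the same route as the paper: reduce to the centered process via $\br_{ij}^t=\bY_i^t-\bY_j^t$, use the covariance identity $\btheta\bSigma\btheta=\lambda_0\btheta$ (your tensor-product computation is just a repackaging of the paper's block-matrix one) together with Lemma~\ref{leamm:Linear}(ii) to place $(\bY_1^t,\bY_2^t,\bY_3^t)$ in the setting of Lemma~\ref{SP_Gaussian}, and then invoke Propositions~\ref{prop.cc2PD_t}, \ref{prop:Qavg_Qt}, and \ref{prop.cc2PD}. Your extra care in part~(ii), noting via Fubini that $h\neq0$ in $L^2(\wbar\rho_T)$ forces $\|h\|_{L^2(\rho_t)}>0$ on a set of positive measure, is a welcome clarification of a point the paper leaves implicit in the proof of Proposition~\ref{prop:Qavg_Qt}.
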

\begin{proof} Let $\bY^t= \bX^t- \bX_c^t$. Note that
\[\br_{ij}^t = \bX^{t}_i- \bX^t_j =\bY^{t}_i- \bY^t_j.\]
Thus,  the coercivity conditions for the process $(\bX^t)$ is equivalent to those for the process $(\bY^t)$.

With $M:=\cov(\bX_i^0,\bX_j^0)$, the covariance of $\bX^0$ can be written as
\[\mbf\Sigma =
 \begin{pmatrix}
  M + \lambda_0 I_d& M & \cdots & M  \\
M & M + \lambda_0 I_d & \cdots & M \\
  \vdots  & \vdots  & \ddots & \vdots  \\
M & M & \cdots & M + \lambda_0 I_d
 \end{pmatrix}.
\]
A direct computation shows that $\btheta\mbf\Sigma \btheta = \lambda_0 \btheta$ for $\btheta$ defined in \eqref{matA}. By Lemma \ref{leamm:Linear}(ii), this special covariance implies that the centralized process $\bY^t= \bX^t- \bX_c^t$ has a covariance $\lambda(t)\btheta$ with $\lambda(t)=\left[e^{-2\theta t}\lambda_0+\frac{1}{2\theta}(1-e^{-\theta t})\right]$.
Then, $(\bY_1^t,\ldots,\bY_N^t)$ is exchangeable Gaussian with covariance satisfying $\cov(\bY_i^t)-\cov(\bY_i^t,\bY_j^t)=\lambda(t) I_d$, particularly for the vector $(\bY^t_1,\bY^t_2,\bY^t_3)$.  Then, applying Lemma \ref{SP_Gaussian} to the vector $(\bY^t_1,\bY^t_2,\bY^t_3)$, we obtain that the integral operator $Q^{\lambda(t)}$ with kernel $K^{\lambda(t)}$ defined in \eqref{kernelK_gauss} is strictly positive on $L^2(\rho_t)$ for each $t$.  In the notation of Proposition  \ref{prop.cc2PD_t}, this implies that integral operator $Q_t= Q^{\lambda(t)}$ with kernel  $K_t= K^{\lambda(t)}$ is strictly positive on $L^2(\rho_t)$. Part (i) then follows.

Since $Q_t$ is strictly positive on $L^2(\rho_t)$ for each $t\in [0,T]$,  so is $\wbar Q_T$ on $L^2(\wbar \rho_T)$ by Proposition \ref{prop:Qavg_Qt}. Then, the coercivity condition holds on $L^2(\wbar \rho_T)$ by Proposition \ref{prop.cc2PD}.
 \end{proof}

\begin{remark}
When the system is deterministic, i.e. there is no stochastic force, the coercivity conditions hold true on $L^2(\wbar \rho_T)$ when the initial distribution is exchangeable Gaussian with $\cov(\bX_i^0)-\cov(\bX_i^0,\bX_j^0)=\lambda_0 I_d$.  In fact, we have $\bX^t = e^{-\thetaOU t}\btheta \bX^0+\bX_c^t$ and $\bY^t = e^{-\thetaOU t}\btheta\bX^0 $.
Then the vector $(\bY_1^t,\bY_2^t,\bY_3^t)$ is exchangeable Gaussian with $\cov(\bY_i^t)-\cov(\bY_i^t,\bY_j^t)=e^{-2\theta t}\lambda_0 I_d$. The  coercivity condition follows again from Lemmas {\rm \ref{SP_Gaussian}} and Proposition {\rm \ref{prop.cc2PD}}. In particular, it holds when the initial distribution is standard Gaussian, i.e., the initial position of the particles are i.i.d. Gaussian.
\end{remark}

\subsection{Coercivity condition for non-radial interaction functions}
The covariance constraint $\cov(\bX_i^0)-\cov(\bX_i^0,\bX_j^0)=\lambda_0 I_d$ in Theorem \ref{thm_main_OU} is necessary for the above proof, due to the need of a series representation of the radial integral kernel $K_t$. Here we remove this constraint by using a series representation of the corresponding non-radial integral kernel. More importantly, we show that the coercivity condition holds true on $L^2$ for interaction functions that are non-radial.

More precisely, consider the system with a non-radial interaction kernel $\phi:\R^d\to \R$,
\begin{equation}\label{eq:sys1st_nonradial}
  d{\bX_i^t} =\frac{1}{N} \sum_{1\leq j\leq N, j\neq i} \phi(\bX_{j}^t - \bX_i^t) \frac{\bX_{j}^t - \bX_i^t}{|\bX_{j}^t - \bX_i^t|}dt+ \sigma d\bB_i^t, \quad \text{for $i = 1, \ldots, N$},
\end{equation}
with initial condition $\bX^0$. We will extend the above coercivity condition to non-radial interaction functions.

It is straightforward to see from Section \ref{sec:inference} that for non-radial interaction functions, the function space of learning is $L^2(\R^d,\wbar{\rho}_T )$ or $L^2(\R^d,\rho_t )$ with
 \begin{equation} \label{eq:rhoavg_Rd}
  \wbar{\rho}_T(du) = \frac{1}{T} \int_0^T \rho_t(du) dt, \quad\text{ with } \rho_t(du)=  \E[\delta(\bX_i^t-\bX_{j}^t\in du)].
 \end{equation}
 Correspondingly, the coercivity condition is on $L^2(\R^d,\wbar{\rho}_T)$ or $L^2(\R^d,\rho_t )$. For simplicity of notation, we denote them by $L^2(\wbar \rho_T)$ and $L^2(\rho_t)$.

\begin{definition}[Coercivity condition for non-radial functions] \label{def_coercivty_Rd}
The 
dynamical system \eqref{eq:sys1st_nonradial} on $[0,T]$ with an initial condition $\bX^0$ and an interaction function $\phi:\R^d\to\R$ is said to satisfy the coercivity condition on a finite dimensional linear subspace $\mathcal{H}\subset L^2(\wbar \rho_T)$, with $\bar \rho_T$ defined in \eqref{eq:rhoavg_Rd},
if
\begin{equation} \label{eq:coercivityDef_Rd}
c_{\mathcal{H},T} : = \quad \inf_{h\in  \mathcal{H}, \, \|h\|_{L^2(\wbar \rho_T)}=1}\frac{1}{T}\int_{0}^T \E[h(\br_{12}^t)h(\br_{13}^t)\frac{\innerp{\br_{12}^t}{\br_{13}^t}}{|\br_{12}^t||\br_{13}^t|} ]dt >0,
\end{equation}
where $\br_{ij}^t = \bX^{t}_i- \bX^t_j$. If the coercivity condition holds true on every finite dimensional linear subspace $\mathcal{H} \subset L^2(\wbar \rho_T)$, we say the system satisfies the coercivity condition. Similarly, we can define the coercivity condition at a single time $t$ on $L^2(\rho_t)$.
\end{definition}

The next theorem shows that the coercivity condition holds true for the linear system when the distribution of $\bX^0$ is non-degenerate exchangeable Gaussian.
\begin{theorem} \label{thm:OU_non-radial}
Suppose the linear system \eqref{eq:OU} 
starts with an initial condition $(\bX_1^0,\ldots,\bX_N^0)$ whose distribution is non-degenerate exchangeable Gaussian. Then the coercivity condition holds true at each time $t\geq 0$, as well as on $[0,T]$, in the sense of Definition {\rm \ref{def_coercivty_Rd}}.
\end{theorem}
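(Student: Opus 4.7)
The plan is to extend the strategy of Theorem \ref{thm_main_OU} in two ways: to accommodate non-radial test functions via the vector-valued decomposition $g(u) := h(u)\,u/|u|$, and to replace the one-dimensional M\"untz-type argument by the completeness of tensorized Hermite polynomials on $\R^d$ obtained from a $d$-dimensional Mehler expansion. This simultaneously removes the covariance constraint $\cov(\bX_i^0)-\cov(\bX_i^0,\bX_j^0) = \lambda_0 I_d$ assumed before.

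First, I apply the macro-micro decomposition of Lemma \ref{leamm:Linear} and work with the centered process $\bY^t = \btheta \bX^t$, noting $\br_{ij}^t = \bY_i^t - \bY_j^t$. Write the initial covariance as $\mbf\Sigma = I_N \otimes A + \mathbf{1}\mathbf{1}^\top \otimes B$ with $A := \cov(\bX_1^0) - \cov(\bX_1^0, \bX_2^0)$; non-degeneracy of $\mbf\Sigma$ forces $A$ to be positive definite (test $\mbf\Sigma$ against $v \otimes w$ for $v \in \R^N$ orthogonal to $\mathbf{1}$). Using $\btheta = P \otimes I_d$ with $P := I_N - N^{-1}\mathbf{1}\mathbf{1}^\top$, one computes $\btheta \mbf\Sigma \btheta = P \otimes A$, from which $(\br_{12}^t, \br_{13}^t)$ is centered Gaussian with block covariance $\bigl(\begin{smallmatrix} 2C_t & C_t \\ C_t & 2C_t \end{smallmatrix}\bigr)$, where $C_t := e^{-2\thetaOU t} A + \frac{1}{2\thetaOU}(1 - e^{-\thetaOU t}) I_d$ is positive definite for every $t \geq 0$.

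Next, I reduce to the single-time condition: the identity $\langle \wbar Q_T h, h\rangle_{L^2(\wbar\rho_T)} = \frac{1}{T}\int_0^T \langle Q_t h, h\rangle_{L^2(\rho_t)} dt$ used in Proposition \ref{prop:Qavg_Qt} extends verbatim to the non-radial setting, so it suffices to show
\[
I_t(h) := \E\Bigl[ h(\br_{12}^t)\, h(\br_{13}^t)\, \frac{\langle \br_{12}^t, \br_{13}^t\rangle}{|\br_{12}^t|\,|\br_{13}^t|} \Bigr] > 0
\]
for every $0 \neq h \in L^2(\rho_t)$. Setting $g(u) := h(u)\,u/|u|$, an $\R^d$-valued function with $\sum_{k=1}^d g_k^2 = h^2$ pointwise and each $g_k \in L^2(\rho_t)$, the integrand rewrites as $I_t(h) = \sum_{k=1}^d \E[g_k(\br_{12}^t)\, g_k(\br_{13}^t)]$.

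Finally, I whiten: $(\br_{12}^t, \br_{13}^t) = (2C_t)^{1/2}(Z_1, Z_2)$, where $(Z_1, Z_2)$ is centered Gaussian with $\cov(Z_i) = I_d$ and $\cov(Z_1, Z_2) = \tfrac{1}{2} I_d$, so the $d$ coordinate-pairs are independent bivariate standard normals with correlation $\tfrac{1}{2}$. The $d$-dimensional Mehler formula gives, for $\tilde g_k(z) := g_k((2C_t)^{1/2}z)$,
\[
\E[g_k(\br_{12}^t)\, g_k(\br_{13}^t)] = \sum_{\alpha \in \NN^d} \frac{(1/2)^{|\alpha|}}{\alpha!} \Bigl( \int_{\R^d} \tilde g_k(z)\, H_\alpha(z)\, p(z)\,dz \Bigr)^2 \geq 0,
\]
where $H_\alpha$ denotes the tensorized Hermite polynomial and $p$ is the standard Gaussian density. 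If $I_t(h) = 0$, then every Hermite coefficient of every $\tilde g_k$ vanishes; by completeness of $\{H_\alpha\}$ in $L^2(\R^d, p(z)dz)$, each $\tilde g_k \equiv 0$ a.e., so $|h((2C_t)^{1/2}z)|^2 = \sum_k \tilde g_k^2(z) = 0$ and $h = 0$ in $L^2(\rho_t)$, a contradiction. The main care required is verifying positive-definiteness of $C_t$ from the structure of non-degenerate exchangeable Gaussians and the bookkeeping between $L^2(\rho_t)$ and the whitened space $L^2(\R^d, p(z)dz)$; neither step is deep, making this more of an extension than a new argument.
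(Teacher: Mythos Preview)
Your proof is correct and follows the same overall architecture as the paper's: reduce via the macro--micro decomposition to the centered Gaussian triple $(\bY_1^t,\bY_2^t,\bY_3^t)$, split $h(u)\,u/|u|$ into scalar components $g_k$, and show each $\E[g_k(\br_{12}^t)g_k(\br_{13}^t)]\ge0$ with equality only if $g_k\equiv0$. The execution differs. The paper (Proposition~\ref{Gaussian}) first transforms $(X,Y,Z)\mapsto PL(X-\mu)$ to diagonalize the cross-covariance, writes the joint density of the differences explicitly, Taylor-expands $\exp\bigl(\tfrac13\sum_ia_iu_iv_i\bigr)$ into monomials, and then invokes Lemma~\ref{poly} (density of polynomials in $L^2$ of a measure with exponential moments) to conclude that vanishing of all monomial coefficients forces $g_j\equiv0$. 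You instead whiten the pair $(\br_{12}^t,\br_{13}^t)$ directly; the block covariance $\bigl(\begin{smallmatrix}2C_t&C_t\\C_t&2C_t\end{smallmatrix}\bigr)$ produces the isotropic cross-correlation $\tfrac12I_d$ after whitening, so the tensorized Mehler expansion applies with no further diagonalization, and strict positivity follows from completeness of the Hermite polynomials in $L^2$ of the standard Gaussian. Your route is a bit shorter and relies only on the standard Hermite basis, bypassing the Berg--Szwarc lemma; the paper's route, on the other hand, makes the positive-definiteness of the density $p(u,v)$ explicit as an intermediate step, which ties in with the kernel-theoretic language used later for the nonlinear systems.
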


As in the previous section, we prove the coercivity condition by showing that the corresponding integral operator is strictly positive, based on a series representation of the non-radial integral kernel. Propositions \ref{prop.cc2PD} and \ref{prop.cc2PD_t} can be directly generalized to a non-radial version. We begin with the following lemma, which is a counterpart of the M\"untz-type theorem, showing that polynomials are dense in weighted $L^2$ spaces.
\begin{lemma}\label{poly} {\rm \cite[Lemma 1.1]{bs92}} Let $\mu$ be a measure on $\mathbb{R}^d$ satisfying
\[\int e^{c|x|}d\mu(x)<\infty\]
for some $c>0$, where $|x|=\sum_{j=1}^d|x_j|$. Then the polynomials are dense in $L^2(\mu)$.
\end{lemma}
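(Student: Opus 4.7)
The plan is to prove density by a duality argument, converting the problem into showing that a finite complex measure whose moments all vanish must itself be zero. The exponential moment hypothesis is used not as a moment-growth bound, but to provide a tube of holomorphy wide enough for a Fourier-Laplace transform to extend. Observe first that $\int e^{c|x|}d\mu < \infty$ forces $\mu$ to be a finite measure, so by Cauchy--Schwarz $L^2(\mu) \subset L^1(\mu)$, and in fact $\int |x|^k|f(x)|\,d\mu < \infty$ for every $f \in L^2(\mu)$ and every $k \geq 0$.

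By Hahn--Banach applied to the closed subspace $\overline{\mathcal{P}} \subset L^2(\mu)$ generated by the polynomials, it suffices to show that any $f \in L^2(\mu)$ satisfying $\int x^\alpha f(x)\,d\mu(x) = 0$ for every multi-index $\alpha \in \mathbb{N}_0^d$ must vanish $\mu$-a.e. Fixing such an $f$, I would introduce
\[
F(z) := \int_{\mathbb{R}^d} e^{\langle z,\,x\rangle} f(x)\,d\mu(x), \qquad z \in T := \{z \in \mathbb{C}^d : |{\rm Re}(z_j)| < c/2 \text{ for each } j\}.
\]
On $T$ one has $|e^{\langle z,x\rangle}| \leq e^{(c/2)|x|}$, and Cauchy--Schwarz yields $\int |e^{\langle z,x\rangle}f(x)|\,d\mu \leq \|f\|_{L^2(\mu)}\bigl(\int e^{c|x|}d\mu\bigr)^{1/2} < \infty$, uniformly on compact subsets of $T$. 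Standard differentiation-under-the-integral arguments (or Morera applied one variable at a time) then show that $F$ is holomorphic on $T$, and the Taylor coefficients at the origin are exactly $\tfrac{1}{\alpha!}\int x^\alpha f\,d\mu = 0$. Hence $F \equiv 0$ on the connected open set $T$ by the identity theorem.

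The conclusion follows by restricting to the imaginary axis $z = i\zeta$, $\zeta \in \mathbb{R}^d$, which lies inside $T$: on this slice $F(i\zeta)$ is the Fourier transform of the finite complex measure $f\,d\mu$. Since this Fourier transform vanishes identically, Fourier uniqueness forces $f\,d\mu = 0$, hence $f = 0$ $\mu$-a.e., contradicting $f \not\equiv 0$. The main delicate point is the verification that $F$ is genuinely holomorphic on a tube wide enough to contain the whole imaginary axis; the hypothesis $\int e^{c|x|}d\mu < \infty$ is exactly what affords the strip $|{\rm Re}(z_j)| < c/2$, making it the sharp moment condition for this style of argument. Everything else is routine once the holomorphic extension is in hand.
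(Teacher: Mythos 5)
The paper does not prove this lemma; it cites it directly from \cite[Lemma 1.1]{bs92}, so there is no in-text argument to compare against. Your proof is correct and is the standard argument for such density statements: reduce to the orthogonality criterion (for which Hilbert-space projection already suffices --- Hahn--Banach is overkill), extend the Fourier--Laplace transform $F(z)=\int e^{\langle z,x\rangle}f(x)\,d\mu(x)$ holomorphically to the tube $\{|\mathrm{Re}\,z_j|<c/2\}$ using the exponential integrability of $\mu$ together with Cauchy--Schwarz, observe that the vanishing moments are precisely the Taylor coefficients of $F$ at the origin, conclude $F\equiv 0$ by the identity theorem, and then invoke Fourier uniqueness on the purely imaginary slice. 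This is in the same spirit as the proof in the cited reference, and each step is adequately justified. One minor slip: the last sentence says ``contradicting $f\not\equiv 0$,'' but the argument is direct, not by contradiction --- you simply conclude $f=0$ $\mu$-a.e., which is exactly what the orthogonality criterion requires.
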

\begin{proposition}\label{Gaussian}
Let $(X, Y, Z)  $ be a Gaussian vector in $ \R^{3d}$ with an exchangeable and non-degenerate joint distribution.  Denote by $ p(u,v)$  the non-degenerate joint density of $(X-Y, X-Z)$ and by $\rho$ the distribution (with a density $q$) of $X-Y$.
 Then, the integral operator $Q$ with kernel
  \begin{equation} \label{kernelK_gauss2}
K(u,v) := \frac{1}{q(u)q(v)} \frac{\innerp{u}{v}}{|u||v|} p(u,v)
 \end{equation}
is strictly positive on $L^2(\rho)$.
\end{proposition}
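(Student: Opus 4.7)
My plan follows the template of Lemma \ref{SP_Gaussian}, replacing the polar series expansion with a multi-index Taylor expansion and the one-variable M\"untz-type theorem with Lemma \ref{poly}. The starting observation is that exchangeability and non-degeneracy pin down the structure: $(X-Y,X-Z)$ is centered Gaussian with positive-definite block covariance $\bigl(\begin{smallmatrix}2\Lambda & \Lambda\\ \Lambda & 2\Lambda\end{smallmatrix}\bigr)$, where $\Lambda:=\cov(X)-\cov(X,Y)$. Since $\langle u,v\rangle$, $|u|$, $|v|$, $p$, and $q$ are all invariant under a simultaneous orthogonal change of basis on $u$ and $v$, I may assume $\Lambda=\mathrm{diag}(\lambda_1,\dots,\lambda_d)$ with each $\lambda_i>0$, so the joint density splits as $p(u,v) = C\,e^{-\sum_i (u_i^2+v_i^2)/(3\lambda_i)}\,e^{\sum_i u_i v_i/(3\lambda_i)}$.

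Next I expand the cross-exponential as a multi-index Taylor series,
\[
e^{\sum_i u_i v_i/(3\lambda_i)} = \sum_{\alpha\in\mathbb{N}^d}\frac{u^\alpha v^\alpha}{\alpha!\,(3\lambda)^\alpha},
\]
substitute into $\langle Qh,h\rangle_{L^2(\rho)} = \iint h(u)h(v)\,\frac{\langle u,v\rangle}{|u||v|}\,p(u,v)\,du\,dv$, write $\langle u,v\rangle = \sum_j u_j v_j$, interchange sum and integration, and separate the $u$- and $v$-integrals to obtain
\[
\langle Qh,h\rangle_{L^2(\rho)} = C\sum_{\alpha,\,j}\frac{1}{\alpha!\,(3\lambda)^\alpha}\left(\int_{\R^d} h(u)\,\frac{u_j u^\alpha}{|u|}\,e^{-\sum_i u_i^2/(3\lambda_i)}\,du\right)^{\!2}\ \geq\ 0.
\]
The interchange is justified by Fubini/Tonelli on the absolute series using the pointwise bound $(u_i^2+v_i^2)/3 - |u_i v_i|/3 \geq (u_i^2+v_i^2)/6$ and Cauchy--Schwarz against $\|h\|_{L^2(\rho)}$.

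For strict positivity, suppose $\langle Qh,h\rangle_{L^2(\rho)}=0$; then every bracketed integral vanishes. Setting $f_j(u):=h(u)u_j/|u|$ and $\nu(du):=Z^{-1}e^{-\sum_i u_i^2/(3\lambda_i)}\,du$, the condition reads $\int_{\R^d} f_j(u)\,u^\alpha\,\nu(du)=0$ for every $j\in\{1,\dots,d\}$ and every $\alpha\in\mathbb{N}^d$. Since $|u_j/|u||\le 1$ and $\nu$ has lighter tails than $\rho$, we have $f_j\in L^2(\nu)$, so each $f_j$ is $L^2(\nu)$-orthogonal to every polynomial. By Lemma \ref{poly}, polynomials are dense in $L^2(\nu)$, hence $f_j=0$ $\nu$-a.e.\ for every $j$. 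Because $|u|>0$ Lebesgue-a.e., the identities $h(u)u_j=0$ for all $j$ force $h=0$ Lebesgue-a.e., i.e., $h=0$ in $L^2(\rho)$. The only delicate point in the whole argument is the term-by-term integration, since the Taylor-series summands are individually signed and non-negativity emerges only after regrouping them into perfect squares; once Fubini is in place, Lemma \ref{poly} finishes the job essentially for free.
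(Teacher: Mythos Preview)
Your argument is correct and follows essentially the same route as the paper's proof: diagonalize the covariance of $(X-Y,X-Z)$, Taylor-expand the cross-exponential to obtain a nonnegative sum of squares, and then invoke Lemma~\ref{poly} on the vanishing of all polynomial moments of the functions $u\mapsto h(u)u_j/|u|$ to force $h=0$. Your version is in fact slightly cleaner than the paper's: you diagonalize $\Lambda=\cov(X)-\cov(X,Y)$ directly by an \emph{orthogonal} change of basis (which leaves $\langle u,v\rangle$, $|u|$, $|v|$ and the Jacobian invariant), whereas the paper first normalizes the marginal covariance via a non-orthogonal $L$ and then orthogonally diagonalizes the cross-covariance, so it must track the factor $(PL)^{-1}$ through the definition of $g_j$; this also lets you fold the paper's separate positivity step (their item~(i), via positive-definiteness of $p$) and the strict-positivity step (their item~(ii)) into a single sum-of-squares identity.
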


\begin{proof} For any $h,g\in L^2(\rho)$, from the definition of $Q$ and the Cauchy-Schwarz inequality, we have
\[|\innerp{Qh}{g}_ {L^2(\rho)}|=\Big|\E[h(X-Z)g(X-Z)\frac{\innerp{X-Y}{X-Z}}{|X-Y||X-Z|}]\Big|\le \|h\|_{L^2(\rho)}\|g\|_{L^2(\rho)}.\]
Thus, $Q$ is  a bounded operator on $L^2(\rho)$. We show that $Q$ is strictly positive by
\begin{itemize}
\item [(i)] $\innerp{Qh}{h}\ge0$ for any $h\in L^2(\rho)$, and
\item [(ii)] $\innerp{Qh}{h}=0\Rightarrow h=0$ in $L^2(\rho)$.
\end{itemize}

To prove (i),  note that
\begin{equation}\label{Qhh}
  \innerp{Qh}{h}_ {L^2(\rho)}= \iint h(u)h(v) K(u,v) \rho(u)\rho(v) dudv= \iint h(u)h(v)\frac{\innerp{u}{v}}{|u||v|}p(u,v)dudv.
\end{equation}
 Then, by Theorem \ref{t52}, it suffices to show $p(u,v)$ is positive-definite.

Since the joint distribution of $(X,Y,Z)$ is exchangeable non-degenerate Gaussian, there exist a vector $\mu \in \R^d$ and an invertible matrix $\Sigma$ such that the distribution of each of $X,Y,Z$ is $N(\mu, \Sigma)$. Decompose $\Sigma^{-1}=LL^T$, and let $\tilde X = L(X-\mu), \tilde Y = L(Y-\mu), \tilde Z = L(Z-\mu) $. Then, the distribution of each of $\tilde X,\tilde Y,\tilde Z$ is $N(0,I_d)$ and their joint distribution is exchangeable and non-degenerate. It follows that $ \cov(\tilde X,\tilde Y)=\cov(\tilde X,\tilde Z)=\cov(\tilde Y,\tilde Z)$.  Let $\Theta=\cov(\tilde X,\tilde Y) =\E[\tilde X\tilde Y^T]$. Since $\Theta$ is real symmetric, it is diagonalizable and there is a real orthonormal matrix $P$ such that $P\Theta P^T={\rm diag}(\lambda_1,...,\lambda_d)$, where $\{\lambda_1,...,\lambda_d\}$ are eigenvalues of $\Theta$. Note that $-1\le\lambda_i<1$ because the joint distribution of $(\tilde X,\tilde Y,\tilde Z)$ is non-degenerate and $\cov(\tilde X) =\cov(\tilde Y) =I_d$.

Let $X'= P\tilde X = LP(X-\mu), Y' = P\tilde Y = LP(Y-\mu), Z' = P\tilde Z = LP(Z-\mu)$. By Theorem \ref{t52}, to  prove that $p(u,v)$ is positive-definite, it suffices to show that the density $q(u,v)$ of $(X'-Y', X'-Z')$ is positive-definite.
Note that the covariance matrix $\cov(X'-Y',X'-Z')$ is invertible:
\[\cov(X'-Y',X'-Z')^{-1}=\begin{bmatrix}2I-2\Theta& I-\Theta\\I-\Theta & 2I-2\Theta\end{bmatrix}^{-1} = \frac13\begin{bmatrix}2\Lambda & -\Lambda\\
-\Lambda& 2\Lambda\end{bmatrix},\]
where $\Lambda:={\rm{diag}}(a_1,...,a_d)$ with $a_i= \frac1{1-\lambda_i}\geq \tfrac12$.
Thus, with a normalizing constant $C_d>0$, we have
\begin{align*}
q(u,v) & =  C_d\exp\left(-\tfrac12(u,v)\cov(X'-Y',X'-Z')^{-1}(u,v)^T\right) \\
& =C_d\exp\left( -\tfrac13\sum_{i=1}^d a_i ( u_i^2 +  v_i^2) +\tfrac13 \sum_{i=1}^d a_i u_iv_i \right).
\end{align*}
 By Theorem \ref{t52},  $q(u,v)$ is positive-definite.

\bigskip
To prove  (ii), let $h\in L^2(\rho)$ satisfy $\innerp{Qh}{h}=0$. We need to prove that $h=0$. Let
\begin{align*}
g_j(u):&=\exp\left(-\frac1{12}\sum_{i=1}^d a_i u_i^2 \right) h((PL)^{-1}u)\frac{((PL)^{-1}u)_j}{|(PL)^{-1}u|} \\
q(u):&=\int q(u,v)dv =C_d'\exp\left(-\frac14\sum_{i=1}^da_iu_i^2\right) \\
f(u,v):&=\exp\left(\frac13\sum_{i=1}^d a_i u_iv_i\right).
\end{align*}
By the linear transform $X\mapsto PL(X-\mu)$, we can rewrite \eqref{Qhh} as
\[\innerp{Qh}{h}=C_d(C_d')^{-2}(\mathrm{det}\ L)^2\sum_{j=1}^d\iint  g_j(u)q(u) g_j(v)q(v)f(u,v)dudv.\]
Note that the Taylor series of  $f(u,v)$ is
\[f(u,v)=\sum_{k=0}^\infty\frac1{k!3^k}(\sum_{i=1}^da_iu_iv_i)^k=\sum_{k=0}^\infty\sum_{i_1+...+i_d=k}C_{k,i_1,...,i_d} (u_1v_1)^{i_1}\cdots(u_dv_d)^{i_d},\]
where all coefficients are positive.
By Fubini's theorem,
\begin{equation}\label{coef}
  \innerp{Qh}{h}=C_d(\mathrm {det}\ L)^2\sum_j\sum_{k,i_1,...,i_d} C_{k,i_1,...,i_d}\left(\int  g_j(u) u_1^{i_1}\cdots u_d^{i_d}q(u)du\right)^2.
\end{equation}
Thus, $\innerp{Qh}{h} =0$ implies that each term must be zero
\[\int  g_j(u)u_1^{i_1}\cdots u_n^{i_n} q(u) du=0,\ {\rm for\  any\ integers}\ i_1,...,i_d\ge0.\]

Note that the measure $\mu$ defined by $d\mu(u):=q(u)du$ satisfies the condition of Lemma \ref{poly}. Then the polynomials are dense in $L^2(\mathbb{R}^d,q)$. Also, note that $g_j\in L^2(\mathbb{R}^d,q)$ because $|g_j(u)|\le |h((PL)^{-1}u)|$ and
\[
\int |h((PL)^{-1}u)|^2 q(u)du = \E[|h((PL)^{-1}(X'-Y'))|^2] =  \E[|h(X-Y)|^2]= \|h\|_{L^2(\rho)}^2 < \infty,
\]
where in the first equality we used the fact that $q(u)$ is the density of $X'-Y'$. Hence, $g_j=0$ in $L^2(\mathbb{R}^d,q)$ for all $j$, and we conclude that $h=0$ in $L^2(\rho)$.
\end{proof}

\bigskip

\begin{proof}[Proof of Theorem \ref{thm:OU_non-radial}] Similar to the proof of Theorem \ref{thm_main_OU}, we only need to consider the process $\bY^t= \bX^t- \bX_c^t$ because the proof only involves $\br_{ij}^t = \bX^{t}_i- \bX^t_j =\bY^{t}_i- \bY^t_j$.

By Lemma \ref{leamm:Linear}, $(\bY_1^t,\ldots,\bY_N^t)$ is exchangeable Gaussian. In particular, the vector $(\bY^t_1,\bY^t_2,\bY^t_3)$ is exchangeable Gaussian on $\R^{3d}$.   Denote by $p_t(u,v)$ the joint distribution of $(\bY_1^t-\bY_2^t,\bY_1^t-\bY_3^t)$ and recall in \eqref{eq:rhoavg_Rd}, we denote by $\rho_t$ the distribution (with $q_t$ denoting its density) of $\bY_1^t-\bY_2^t$ and denote by $\wbar\rho_T$ the average of $\rho_t$ on $[0,T]$ (with $\wbar q_T$ denoting its density).

By Proposition \ref{Gaussian}, the integral operator $Q_t$ with  kernel  \begin{equation} \label{kernelK2}
K_t(u,v) := \frac{1}{q_t(u)q_t(v)}\frac{ \innerp{u}{v}}{|u||v|} p_t(u,v)
 \end{equation} is strictly positive on $L^2(\rho_t)$. Then it follows from the non-radial version of Proposition \ref{prop.cc2PD_t} that the coercivity condition holds true for each time $t$.

Similar to the proof in Theorem \ref{thm_main_OU}, the coercivity on $[0,T]$ follows from the non-radial version of Proposition \ref{prop.cc2PD} and Proposition \ref{prop:Qavg_Qt}. More precisely, we would like to show that the integral operator $\bar Q_T$ with kernel
 \begin{equation*}
\bar K_T(u,v) := \frac{1}{\wbar q_T(u)\wbar q_T(v)} \frac{\innerp{u}{v}}{|u||v|} \int_0^Tp_t(u,v)dt
 \end{equation*} is strictly positive on $L^2(\bar\rho_T)$. Note that  $\langle \wbar Q_T  h, h\rangle_{L^2(\wbar\rho_T)}  = \frac{1}{T} \int_0^T \langle Q_t h, h\rangle_{L^2(\rho_t)} dt$ for any $h\in L^2(\wbar \rho_T)$, and we just showed above that $\langle Q_t h, h\rangle_{L^2(\rho_t)}>0$ for each $t\in [0,T]$. Thus, $\wbar Q_T$ is strictly positive.
 \end{proof}

\section{Nonlinear systems with three particles}\label{sec:3particleSys}
We consider the class of nonlinear systems with interaction functions in Theorem \ref{thm_mainAll}(c), starting from an invariant measure. Since the diffusion process $(\bX^t, t\in [0,T])$ has a stationary distribution, the coercivity condition on a time instance is the same as on the time interval  $[0,T]$, because $\rho_T=\rho_t$. Thus, we simply say that coercivity condition holds true without specifying it being at a single time or on a time interval when the process is stationary.

Global solutions exist for the gradient system \eqref{eq:sys_grad} with a potential $\Phi(r)$ in \eqref{eq:Phi_nonlinear}, because the potential leads to a locally Lipschitz drift term and the total potential $J_\phi$ in \eqref{eq:potential} leads to a Lyapunov function for the system. 

 The following theorem covers Theorem \ref{thm_mainAll}(c).
\begin{theorem}\label{Thm:cc_general} The coercivity condition holds true for the system \eqref{eq:sys_grad} with $N=3$ starting from an initial condition $\bX^0$ such that the joint density of $(\bX^0_1-\bX^0_2, \bX^0_1-\bX^0_3)$ is $p(u,v)$ in  \eqref{eq:r_invPDF}, and with $\Phi$ in  \eqref{eq:Phi_nonlinear}.
\end{theorem}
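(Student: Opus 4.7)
My plan is to reduce the coercivity condition to the strict positivity of an integral operator associated with the stationary density, and then to prove that positivity by Fourier analysis, comparing $W(x) := e^{-\Phi(|x|)/3}$ to a Gaussian via Bernstein subordination.

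Since $p(u,v)$ in \eqref{eq:r_invPDF} is invariant for the process $(\br_{12}^t,\br_{13}^t)$, initializing from this law makes the process stationary, so $\rho_t = \wbar\rho_T =: \rho$ for every $t \in [0,T]$. By Propositions \ref{prop.cc2PD} and \ref{prop.cc2PD_t}, the coercivity condition is then equivalent to strict positivity of the integral operator $Q$ on $L^2(\rho)$, namely
\[
\langle Qh, h\rangle_{L^2(\rho)} = \iint_{\R^d\times\R^d} h(|u|)h(|v|)\, \frac{\langle u,v\rangle}{|u|\,|v|}\, p(u,v)\,du\,dv > 0
\]
for every nonzero $h \in L^2(\rho)$. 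Factoring $p(u,v) = Z^{-1}\pi(u)\pi(v) W(u-v)$ with $\pi(u) = e^{-\Phi(|u|)/3}$ and $W(x) = e^{-\Phi(|x|)/3}$, and using $\langle u,v\rangle/(|u||v|) = \sum_{i=1}^d (u_i/|u|)(v_i/|v|)$, I set $G_i(u) := (u_i/|u|)\, h(|u|)\, \pi(u)$ and rewrite
\[
\langle Qh, h\rangle_{L^2(\rho)} = \frac{1}{Z} \sum_{i=1}^d \iint G_i(u)\,G_i(v)\, W(u-v)\,du\,dv.
\]

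The heart of the argument is proving that $\hat W$ is strictly positive on $\R^d$. I split $W = W_1 W_2$ with $W_1(x) = e^{-a|x|^{2\beta}/3}$ and $W_2(x) = e^{-\Phi_0(|x|)/3}$. Since $\Phi_0(|\cdot - \cdot|)$ is negative definite on $\R^d\times\R^d$, Schoenberg's theorem (recalled in Section \ref{sec:append}) yields that $W_2$ is positive definite, so $\hat W_2$ is a nonzero positive Borel measure on $\R^d$. For $W_1$, since $\beta \in [1/2,1] \subset (0,1]$, the function $t \mapsto e^{-(a/3)t^\beta}$ is completely monotone and Bernstein's theorem furnishes a positive measure $\mu_\beta$ on $(0,\infty)$ with
\[
e^{-(a/3)|x|^{2\beta}} = \int_0^\infty e^{-s|x|^2}\,d\mu_\beta(s).
\]
Thus $W_1$ is a positive mixture of Gaussians, and its Fourier transform $\hat W_1(\xi) = \int_0^\infty (\pi/s)^{d/2} e^{-|\xi|^2/(4s)}\,d\mu_\beta(s)$ is strictly positive everywhere. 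Consequently $\hat W = \hat W_1 * \hat W_2$ is the convolution of a strictly positive continuous function with a nonzero positive measure, and is strictly positive on all of $\R^d$. Applying Plancherel to each summand gives $\langle Qh,h\rangle_{L^2(\rho)} = Z^{-1}\sum_i \int |\hat G_i(\xi)|^2 \hat W(\xi)\,d\xi$, so vanishing of the form forces $\hat G_i \equiv 0$ a.e.\ and hence $G_i \equiv 0$. Since $\pi > 0$, this gives $(u_i/|u|)\, h(|u|) = 0$ a.e.\ for every $i$; for any $r > 0$, a.e.\ unit vector $\omega$ has some $\omega_i \neq 0$, forcing $h(r) = 0$ and thus $h = 0$ in $L^2(\rho)$.

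The main obstacle will be the Fourier-analytic bookkeeping: for generic $h \in L^2(\rho)$ the functions $G_i$ need not lie in $L^1 \cap L^2(\R^d)$, so Plancherel and Bochner must be applied on a suitable dense class (exploiting the rapid decay of $\pi$ coming from $\Phi(r)\to\infty$) and then extended by density, and one must also verify that $p(u,v)$ in \eqref{eq:r_invPDF} really is an invariant density for the $N=3$ projected process. The subordination step is the ``comparison to Gaussian kernel'' advertised by the authors: it cleanly reduces the whole range $\beta \in [1/2,1]$ to a positive mixture of the Gaussian case ($\beta=1$) treated in Section \ref{sec:OU}.
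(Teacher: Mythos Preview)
Your approach is correct in spirit and genuinely different from the paper's. Both proofs begin by reducing to strict positivity of $\langle Qh,h\rangle$ at the stationary density (the paper proves invariance of $p(u,v)$ in Proposition~\ref{prop_staionary_r}, which you correctly flag as a step to be checked). After that the routes diverge. The paper works entirely on the real side: it writes $e^{-\Phi(|u-v|)}=e^{-a|u-v|^{2\beta}}e^{-\Phi_0(|u-v|)}$, uses Theorem~\ref{tpn} to see that $|u|^{2\beta}+|v|^{2\beta}-|u-v|^{2\beta}$ (and its $\Phi_0$ analogue) is positive definite, then applies the elementary inequality of Lemma~\ref{lemma_exp_bound} (drop all but one Taylor term of $e^{\Phi_2}$) to bound $\langle Qh,h\rangle$ below by an integral involving only $|u-v|^{2\beta}$. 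The Lévy--Khinchine/Gamma identity $|x|^{2\beta}=\tfrac{\beta}{\Gamma(1-\beta)}\int_0^\infty(1-e^{-\lambda|x|^2})\lambda^{-\beta-1}d\lambda$ turns this into a positive mixture of Gaussian kernels, and a Taylor expansion in $\langle u,v\rangle$ together with the M\"untz-type Lemma~\ref{lemma:Muntz} forces $h=0$. Your route is Fourier-analytic: you show directly that $\hat W>0$ everywhere by writing $W=W_1W_2$, recognising $\hat W_1$ as a symmetric $2\beta$-stable density (Bernstein subordination) and $\hat W_2$ as a nonzero positive measure (Schoenberg/Bochner), so that $\hat W=\hat W_1*\hat W_2>0$; Plancherel then gives $\langle Qh,h\rangle=Z^{-1}\sum_i\int|\hat G_i|^2\hat W$ and strict positivity follows without M\"untz. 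Your argument is cleaner conceptually and explains \emph{why} the kernel is strictly positive definite; the paper's argument is more self-contained and sidesteps the Fourier bookkeeping you flag.

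On that bookkeeping: the gap is real but closable. Since $\Phi_0(|\,\cdot\,|)$ is negative definite one has $\Phi_0(r)\ge\Phi_0(0)$, hence $\pi(u)\le Ce^{-a|u|^{2\beta}/3}$ and $W\in L^1$, so $\hat W$ is continuous and the convolution $\hat W_1*\hat W_2$ is well defined. The delicate point is passing from $h\in L^2(\rho)$ to $G_i\in L^2(\R^d)$ (or $L^1$) so that Plancherel applies; this amounts to an inequality of the form $W(r)\le C\,(W*W)(r)$, since $q(r)\propto r^{d-1}W(r)(W*W)(r)$. For $\beta\in[\tfrac12,1]$ the subadditivity $|x|^{2\beta}\le |y|^{2\beta}+|x-y|^{2\beta}$ (equivalently, negative definiteness of $|x|^{2\beta}$) gives $W_1(x)\ge W_1(y)W_1(x-y)$ up to constants and hence $(W*W)(x)\gtrsim W(x)\int W$, which suffices. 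Alternatively, run the Plancherel identity first for $h\in L^\infty_c$ and extend by the bounded-seminorm argument you sketch; since each summand $\int|\hat G_i|^2\hat W$ is individually dominated by $Z\|h\|_{L^2(\rho)}^2$, the maps $h\mapsto\hat G_i\in L^2(\hat W\,d\xi)$ extend continuously, and $\langle Qh,h\rangle=0$ forces $\hat G_i=0$ in $L^2(\hat W)$, hence a.e.\ since $\hat W>0$. Either way the conclusion $h=0$ follows as you indicate.
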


Note that we only need the joint density of $(\bX^t_1-\bX^t_2, \bX^t_1-\bX^t_3)$ to study the coercivity condition. We will show first that  $p(u,v)$ in  \eqref{eq:r_invPDF} is an invariant density for $(\bX^t_1-\bX^t_2, \bX^t_1-\bX^t_3)$ when $N=3$ (see Section \ref{sec:stationary}).  Then based on the analytical expression of $p(u,v)$, we introduce a ``comparison to Gaussian kernels'' technique, which makes extensive use of positive-definite kernels. We will prove the theorem and develop the technique in two steps: first when $\Phi(r)=r^{2\beta}$ in Section \ref{sec:powerFn} and then general $\Phi$ in Section \ref{sec:genFn}. Due to the need of the above analytical form of $p(u,v)$, our result is limited to the case when $N=3$ and when the initial distribution is the invariant measure (see Remark \ref{Rmk:N=3}).

\subsection{Stationary distribution for pairwise differences} \label{sec:stationary}

We show first that the process of pairwise differences $(\bX^t_1-\bX^t_2, \bX^t_1-\bX^t_3)$ admits a stationary distribution.
\begin{proposition} \label{prop_staionary_r}
Suppose that $\Phi\in C^2(\R^+,\R)$ and $Z= \int_{\R^d}\int_{\R^d} e^{-H(u,v)} du dv< \infty$ for
\[
H(u,v) =\frac{1}{3} [\Phi(|u|)+ \Phi(|v|)+ \Phi(|u-v|)].
\]
Then the process  $(\br^t_{12},\br^t_{13}) =  (\bX^t_1-\bX^t_2, \bX^t_1-\bX^t_3)$ admits $p(u,v)$ in \eqref{eq:r_invPDF} as an invariant density.

\end{proposition}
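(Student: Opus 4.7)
The plan is to close the SDE for $\bU^t := (\br^t_{12},\br^t_{13})\in\R^{2d}$, recognize its drift as a gradient of $H(u,v)=\tfrac{1}{3}[\Phi(|u|)+\Phi(|v|)+\Phi(|u-v|)]$ with respect to the constant diffusion matrix of $\bU^t$, and then conclude stationarity of $p=e^{-H}/Z$ via the forward Kolmogorov (Fokker-Planck) equation.

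First I would derive the closed-form SDE for $\bU^t$. Subtracting the equations for particles $2$ and $3$ from that of particle $1$ in \eqref{eq:sys1st} with $N=3$, writing $\Psi(w):=\phi(|w|)w/|w|=\nabla_w\Phi(|w|)$, and using the two key identities $\br^t_{23}=\br^t_{13}-\br^t_{12}=v-u$ and $\Psi(-w)=-\Psi(w)$, all three pairwise interactions collapse to functions of $(u,v)$, yielding
\begin{equation*}
d\bU^t = -\tfrac{1}{3}\bigl(2\Psi(u)+\Psi(v)-\Psi(v-u),\ \Psi(u)+2\Psi(v)+\Psi(v-u)\bigr)dt + d\mathbf{W}^t,
\end{equation*}
where the noise $\mathbf{W}^t=(\bB_1^t-\bB_2^t,\bB_1^t-\bB_3^t)$ has constant covariance $\Sigma$ equal to the block matrix with diagonal blocks $2I_d$ and off-diagonal blocks $I_d$.

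Next I would verify the gradient structure. A direct computation gives
\begin{equation*}
\nabla_u H = \tfrac{1}{3}[\Psi(u)-\Psi(v-u)], \qquad \nabla_v H = \tfrac{1}{3}[\Psi(v)+\Psi(v-u)],
\end{equation*}
and multiplying by $\Sigma$ componentwise shows that the drift above is a constant multiple of $-\Sigma\nabla H$. This places the process in the standard gradient-SDE form $d\bU = -\tfrac{1}{2}\Sigma\nabla U\,dt + d\mathbf{W}$ with $U$ a suitable multiple of $H$ fixed by the noise normalization in \eqref{eq:sys1st}. For any such SDE with constant diffusion, the pointwise identities $\partial_i e^{-U}=-(\partial_i U)e^{-U}$ and $\partial_i\partial_j e^{-U}=(\partial_i U\,\partial_j U-\partial_i\partial_j U)e^{-U}$ combine to give $-\partial_i(b_i e^{-U})+\tfrac{1}{2}\Sigma_{ij}\partial_i\partial_j e^{-U}=0$, so the forward Kolmogorov operator annihilates $e^{-U}/Z$. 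The hypothesis $Z<\infty$ then promotes this formal identity into a bona fide invariant probability density.

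The main obstacle is the algebraic identification $b=-\Sigma\nabla H$ in step two: the drift involves three pairwise forces with asymmetric coefficients, and their reassembly into a $\Sigma$-gradient of the symmetric potential $H$ depends on delicate cancellations relying on $\br^t_{23}=v-u$ and the antisymmetry of $\Psi$. The hypothesis $N=3$ is essential here, since only for $N=3$ do the two chosen coordinates $(\br^t_{12},\br^t_{13})$ already encode every inter-particle interaction, making the reduced process autonomous with a closed-form Gibbs stationary density of the simple product form \eqref{eq:r_invPDF}; for $N\ge 4$ the marginal of two pair differences cannot be expected to retain this explicit shape.
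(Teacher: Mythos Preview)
Your approach is correct and somewhat more direct than the paper's. Both arguments begin by closing the SDE for $(\br^t_{12},\br^t_{13})$ and identifying a gradient structure, but they diverge in how they handle the non-identity diffusion $\Sigma=\begin{pmatrix}2I_d&I_d\\I_d&2I_d\end{pmatrix}$. The paper applies a linear change of variables $\bY=A^{-1}(\br_{12},\br_{13})$ with $AA^{T}=\Sigma$ to reduce to a standard Brownian motion, checks that the transformed drift is $-\nabla_{\bY}\widetilde H$ for $\widetilde H(\by)=H(A\by)$, and then invokes Lemma~\ref{lemma:GradS_inv}. You instead verify the $\Sigma$-gradient identity $b=-\Sigma\nabla H$ in the original coordinates and plug directly into the stationary Fokker--Planck equation; this avoids the bookkeeping of the transformation and makes the mechanism (detailed balance with respect to $\Sigma$) transparent. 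Your computation $\Sigma\nabla H=\tfrac{1}{3}\bigl(2\Psi(u)+\Psi(v)-\Psi(v-u),\,\Psi(u)+2\Psi(v)+\Psi(v-u)\bigr)$ is exactly the drift, so in fact $b=-\Sigma\nabla H=-\tfrac{1}{2}\Sigma\nabla(2H)$, i.e.\ $U=2H$; your hedge ``a suitable multiple of $H$'' is prudent here, and indeed matches the exponent $\tfrac{2}{3}$ used later in Sections~\ref{sec:powerFn}--\ref{sec:genFn}. Your closing remark on why $N=3$ is essential also anticipates the paper's Remark~\ref{Rmk:N=3}.
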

\begin{proof}
Note that
\begin{equation} \label{sys:r_ij}
\left \{
\begin{array}{ll}
    d\br^t_{12}& = F(\br^t_{12}, \br^t_{13})dt + (d\bB^t_1-d\bB^t_2),   \\
    d\br^t_{13}& = F(\br^t_{13}, \br^t_{12})dt + (d\bB^t_1-d\bB^t_3)  ,
\end{array}
\right.
\end{equation}
where the function $F:\R^d\times \R^d\to \R^d$ is given by
\[
F(u,v) = - \frac{1}{3} [2\phi(|u|)u +\phi(|v|)v +\phi(|u-v|)(u-v)  ],
\]
where $\phi(r)= \Phi'(r)$.

The diffusion $ \begin{pmatrix} d\bB^t_1-d\bB^t_2\\ d\bB^t_1-d\bB^t_3  \end{pmatrix}$ has a non-degenerate covariance $\begin{pmatrix} 2I_d & I_d\\ I_d& 2I_d  \end{pmatrix}$. One can then verify directly that the distribution $p(u,v)$ is a stationary solution to the Kolmogorov forward equation of \eqref{sys:r_ij}. Alternatively, for ease of computation, we show that the system \eqref{sys:r_ij} is a linear transformation of a gradient system with homogeneous diffusion, which shares the same invariant measure. Let $A = \sqrt{2}\begin{pmatrix} I_d & 0\\ 1/2 I_d& \sqrt{3}/2 I_d  \end{pmatrix}$, which satisfies  $AA^T = \begin{pmatrix} 2I_d & I_d \\ I_d & 2I_d  \end{pmatrix}$. Then the process
\[
\begin{pmatrix} \bY_1^t \\ \bY_2^t  \end{pmatrix}
= A^{-1} \begin{pmatrix} \br^t_{12}\\  \br^t_{13} \end{pmatrix}
\]
 is a weak solution to the system
\begin{align} \label{sys:r_grad}
d \begin{pmatrix} \bY_1^t \\ \bY_2^t  \end{pmatrix}
& = A^{-1} \begin{pmatrix} F(\bY^t_1, \frac{\sqrt{2}}{2}\bY^t_1 + \frac{\sqrt{6}}{2}\bY^t_2) \\ F(\frac{\sqrt{2}}{2}\bY^t_1 + \frac{\sqrt{6}}{2}\bY^t_2), \bY^t_1) \end{pmatrix} dt +  \begin{pmatrix} d\widetilde\bB_1^t\\ d\widetilde\bB_2^t \end{pmatrix},
\end{align}
where $(\widetilde\bB_1^t,\widetilde\bB_2^t)$ is a standard Brownian motion on $\R^{2d}$. Notice that $H(u,v) = H(v,u)$ and
\begin{align*}
 A^{-1} \begin{pmatrix} F(\sqrt{2}u,\, \frac{\sqrt{2}}{2}u + \frac{\sqrt{6}}{2}v) \\ F(\frac{\sqrt{2}}{2}u + \frac{\sqrt{6}}{2}v,\, \sqrt{2}u) \end{pmatrix}  =  \begin{pmatrix} \nabla_u[  H(\sqrt{2}u,\, \frac{\sqrt{2}}{2}u + \frac{\sqrt{6}}{2}v) ] \\ \nabla_v[ H(\frac{\sqrt{2}}{2}u + \frac{\sqrt{6}}{2}v,\, \sqrt{2}u) ] \end{pmatrix}.
\end{align*}
Then, it follows from Lemma \ref{lemma:GradS_inv} that $p_{\bY}(\by_1,\by_2)\propto e^{-2 H(\by_1,\frac{\sqrt{2}}{2}\by_1 + \frac{\sqrt{6}}{2}\by_2))}$ is an invariant density for the system \eqref{sys:r_grad}. Therefore, the process $(\br^t_{12},\br^t_{13})$ admits $p(u,v)$
as an invariant density.
\end{proof}

\begin{remark}
Similarly, one can prove that the process $(\bX^{t}_1- \bX^t_2, \bX^{t}_1- \bX^t_3,\dots,\bX^{t}_1- \bX^t_N)$ admits a stationary density on $\R^{(N-1)d}$. In essence, we decompose the system into a reference particle and the relative positions of other particle to the reference particle. This is similar to the macro-micro decomposition of the system in \cite{malrieu2003convergence,cattiaux2008probabilistic, ha2009emergence,cattiaux2018stochastic}. But the above transformation has the following advantage: it leads to a gradient system with an additive white noise, and this simplifies the derivation of the stationary distribution.  
\end{remark}

\begin{remark}\label{Rmk:N=3}
Our current proof for the coercivity condition makes use of the analytical expression of the invariant density $p(u,v)$ of $(\bX^{t}_1- \bX^t_2, \bX^{t}_1- \bX^t_3)$ and of the M\"untz-type theorem on $\R^+$. When $N>3$, such an explicit form is no longer available due to the need of marginalizing the joint distribution of $(\bX^{t}_1- \bX^t_2, \bX^{t}_1- \bX^t_3,\dots,\bX^{t}_1- \bX^t_N)$:
\begin{equation*}
p(u,v)=\frac{1}{Z}f(u,v)e^{-\frac{2}{N}\left[\Phi(|u|)+\Phi(|v|)+\Phi(|u-v|)\right]},
\end{equation*}
where $Z$ is a normalizing constant and
\begin{align*}
f(u,v)=\int_{\R^{d(N-3)}}
\label{fuv} e^{-\frac{2}{N}\left[\sum_{4\leq i<j}^N\Phi(|\mathbf{r}_{1i}-\mathbf{r}_{1j}|)+\sum_{l=4}^N\left[\Phi(|\mathbf{r}_{1l}|)+\Phi(|u-\mathbf{r}_{1l}|)+\Phi(|v-\mathbf{r}_{1l}|)\right]\right]}     d\mathbf{r}_{14}\ldots\mathbf{r}_{1N}.
\end{align*}
We expect to remove the constraint $N=3$ in future research.  Also, when the system starts from initial distributions other than the invariant measure, the density of $(\bX^{t}_1- \bX^t_2, \bX^{t}_1- \bX^t_3)$ will no longer be $p(u,v)$. Perturbation type arguments may help to address such cases.
\end{remark}

\subsection{Interaction potentials in form of $\Phi(r) = r^{2\beta}$} \label{sec:powerFn}
We develop in this section a ``comparison to Gaussian kernels'' technique to prove that the coercivity condition holds true for systems with interaction potential $\Phi(r) = r^{2\beta}$ for $1/2\leq \beta\leq 1$.
We first prove that $p(u,v)$ in \eqref{eq:r_invPDF} is positive-definite, then prove that the integral kernel $K_t$ is strictly positive-definite by comparing it with the Gaussian kernel.

\begin{lemma}\label{l31}
Assume $\Phi(r)=r^{2\beta}$. Let $p(u,v)$ be the density function defined in \eqref{eq:r_invPDF}.
\begin{enumerate}
\item[(1)] If $0<\beta \leq 1$, all the three kernels, $\Phi(|u-v|)$, $e^{-\Phi(|u-v|)}$, and $p(u,v)$, are positive-definite.
\item[(2)] If $\beta>1$,  then $p(u,v)$ is not positive-definite.
\end{enumerate}
\end{lemma}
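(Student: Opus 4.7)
The plan is to treat the two parts in tandem by first performing a key factorization of the joint density. Writing
\[
p(u,v) = \frac{1}{Z}\, g(u)\, g(v)\, e^{-\Phi(|u-v|)/3},\qquad g(x):=e^{-\Phi(|x|)/3},
\]
decouples the pointwise factors in $u$ and $v$ from the genuinely translation–invariant piece $K(x):=e^{-\Phi(|x|)/3} = e^{-|x|^{2\beta}/3}$. Since $g(x)>0$ everywhere on $\R^d$, the substitution $a_i = c_i\, g(u_i)$ makes the quadratic form that tests positive–definiteness of $p$,
\[
\sum_{i,j} c_i c_j\, p(u_i,u_j) \;=\; \frac{1}{Z}\sum_{i,j} a_i a_j\, K(u_i-u_j),
\]
equivalent to the corresponding quadratic form for $K$. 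Consequently, positive–definiteness of $p$ is \emph{equivalent} to positive–definiteness of $K$ on $\R^d$. This one observation drives both directions of the lemma.

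For part (1), I would first invoke the classical fact that for $\beta\in(0,1]$ the radial function $|x|^{2\beta}$ is (conditionally) negative–definite on $\R^d$ — either by Schoenberg's theorem or by reading it off the Lévy–Khintchine representation of the symmetric $2\beta$–stable law. This immediately yields the claim for the kernel $\Phi(|u-v|)$. By Schoenberg's theorem applied with parameter $t=1$ and $t=1/3$ respectively, $e^{-\Phi(|u-v|)}$ and $K(u-v)=e^{-\Phi(|u-v|)/3}$ are bona fide positive–definite functions on $\R^d$, being characteristic functions of symmetric $2\beta$–stable distributions (with different scales). The rank–one kernel $(u,v)\mapsto g(u)g(v)$ is trivially positive–semidefinite, and the Schur (Hadamard) product theorem then gives that the pointwise product $g(u)g(v)\,K(u-v)$ is positive–semidefinite, so $p(u,v)$ is positive–definite as claimed.

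For part (2), the reduction above shows it suffices to exhibit a failure of positive–definiteness of $K(x)=e^{-|x|^{2\beta}/3}$ for $\beta>1$. By Bochner's theorem this amounts to showing that $\widehat K$ is not a nonnegative measure. The underlying classical fact is that $e^{-c|x|^\alpha}$ is the characteristic function of a probability measure on $\R^d$ if and only if $\alpha\in(0,2]$; for $\alpha=2\beta>2$ the Fourier transform must take strictly negative values on a set of positive Lebesgue measure. One can then turn this into a concrete test: pick a Schwartz function $\psi$ whose Fourier transform $\widehat\psi\geq0$ is supported in a region where $\widehat K<0$, and take $\{c_i\}$, $\{u_i\}$ discretizing $\psi$; the Riemann sum $\sum c_i c_j\, K(u_i-u_j)$ then approximates $\int \widehat K(\xi)|\widehat\psi(\xi)|^2 d\xi<0$. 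Lifting through $c_i\mapsto c_i/g(u_i)$ produces a coefficient vector that witnesses $p$ failing to be positive–definite.

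The routine content sits in part (1): after citing Schoenberg and the Schur product theorem (both available in the preliminaries of Section \ref{sec:append}), everything follows. The real work is in part (2), where the main obstacle is giving a clean justification that $e^{-|x|^{2\beta}/3}$ loses positive–definiteness precisely when $2\beta>2$. The cleanest route is the reduction to a translation–invariant kernel combined with either the stable–law characterization or an explicit small–$n$ counterexample; the crucial conceptual step is that the $u,v$–dependent factors $g(u)g(v)$ can be scaled away without affecting the sign of the quadratic form, which is why the obstruction from the translation–invariant part $K$ alone is decisive.
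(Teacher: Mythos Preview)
Your reduction via the factorization $p(u,v)=\tfrac{1}{Z}g(u)g(v)K(u-v)$ with $K(x)=e^{-|x|^{2\beta}/3}$ is exactly what the paper does (its opening sentence ``$p(u,v)$ is positive-definite iff $e^{-\Phi(|u-v|)}$ is positive-definite'' is the same observation, modulo the scaling $x\mapsto 3^{-1/(2\beta)}x$). Part~(1) then proceeds identically in both arguments, citing Schoenberg/Theorem~\ref{t53} and the negative-definiteness of $|u-v|^{2\beta}$.

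For Part~(2) you take a genuinely different route. You appeal to the stable-law classification (or Bochner plus Fourier sign analysis) to assert that $e^{-c|x|^{\alpha}}$ fails to be positive-definite once $\alpha>2$, and then lift the counterexample through the diagonal scaling $c_i\mapsto c_i/g(u_i)$. This is correct, though the discretization step is more machinery than you need: once you know $K$ is not positive-definite, a violating finite configuration exists by definition. The paper instead gives a fully elementary three-point argument: assuming $p$ (hence $K$ by your reduction) is positive-definite, scaling gives $e^{-t|u-v|^{2\beta}}$ positive-definite for all $t>0$, Schoenberg in reverse (Theorem~\ref{t53}) makes $|u-v|^{2\beta}$ negative-definite, Theorem~\ref{t55} then makes $|u-v|^{\beta}$ a metric, and evaluating at $\mathbf{0},\mathbf{1},\mathbf{2}\in\R^d$ violates the triangle inequality since $2^{\beta}>2$. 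Your approach connects the result to the structural theory of stable distributions; the paper's buys a self-contained explicit counterexample using only the appendix theorems, with no harmonic analysis.
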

\begin{proof}This is a generalization of Corollary 3.3.3 of \cite{BCR84} to the higher-dimensional case. Note that $p(u,v)$ is positive-definite iff $e^{-\Phi(|u-v|)}$ is positive-definite.
The kernel $|u-v|^2$ for $u,v\in\RR^d$ is a negative definite kernel,  because for any $c_1,\ldots,c_n\in\RR$, and $\sum_{i=1}^{n}c_i=0$,
\begin{eqnarray*}
\sum_{i,j=1}^{n}c_ic_j|u_i-u_j|^2&=&\left[\sum_{i=1}^{n}c_i\right]\left[\sum_{j=1}^n c_j |u_j|^2\right]+\left[\sum_{j=1}^{n}c_j\right]\left[\sum_{i=1}^n c_j |u_i|^2\right]-\left|\sum_{i=1}^n c_iu_i\right|^2\\
&=&-\left|\sum_{i=1}^n c_iu_i\right|^2\leq 0.
\end{eqnarray*}
By Theorem \ref{t54}, $|u-v|^{2\beta}$ is also a negative definite kernel for any $0< \beta\leq 1$. By Theorem \ref{t53}, we obtain that $e^{-|u-v|^{2\beta}}$ is positive-definite, then Part (1) follows.

Now we prove Part (2). Suppose now that for some $\beta>1$, $p(u,v)$ is a positive-definite kernel. Then for any $t>0$, $x_1,\ldots,x_n\in \RR^d$ and $c_1,\ldots,c_n\in \RR$, we have
\begin{eqnarray*}
\sum_{j,k=1}^{n}c_jc_ke^{-t|x_j-x_k|^{2\beta}}=\sum_{j,k=1}^{n}c_jc_ke^{-\left|t^{\frac{1}{2\beta}}x_j-t^{\frac{1}{2\beta}}x_k\right|^{2\beta}}\geq 0
\end{eqnarray*}
By Theorem \ref{t53}, the kernel $|u-v|^{2\beta}$ is negative definite, and by Theorem \ref{t55}, $|u-v|^{\beta}$ is a metric on $\RR$. Let $\mathbf{0}=(0,\ldots,0)\in \RR^d$, $\mathbf{1}=(1,\ldots,1)\in \RR^d$ and $\mathbf{2}=(2,\ldots,2)\in \RR^d$. Note that
\begin{eqnarray*}
|\mathbf{0}-\mathbf{1}|^{\beta}=d^{\frac{\beta}{2}}, \quad
|\mathbf{0}-\mathbf{2}|^{\beta}=2^{\beta}d^{\frac{\beta}{2}}> 2 |\mathbf{0}-\mathbf{1}|^{\beta}
\end{eqnarray*}
when $\beta>1$. The contradiction to the triangle inequality implies Part (2).
\end{proof}


\bigskip
Recall that the coercivity condition depends only on the distribution of the process $(\br_{12}^t,\br_{13}^t)$. When the process $(\br_{12}^t,\br_{13}^t)$ is stationary, the coercivity condition at a time instance in Definition \ref{def_coercivty_t} is equivalent to that of Definition \ref{def_coercivty}. Following proposition \ref{prop.cc2PD}, the coercivity condition is equivalent to the positivity of the integral operator $Q$ on $L^2(\rho)$ with kernel $K(r,s):\R^+\times \R^+\to\R$ defined by
  \begin{equation} \label{kernelK_inv}
K(r,s) := \frac{1}{q (r)q(s)} (rs)^{d-1}\int_{S^{d-1}}\int_{S^{d-1}}\innerp{\xi}{\eta} p(r\xi,s\eta) d\xi d\eta,
 \end{equation}
 where  $p(u,v)$ is the stationary density defined in \eqref{eq:r_invPDF}, and  $\rho$ denotes the distribution of $|\br_{12}^t|$ with $q$ being its density. For the case $\beta=1$ in the previous section, we witnessed that the Gaussian distribution neatly ensures strict positivity of the integral operator through Taylor expansion of $\inp{u,v}e^{c\inp{u,v}}$. However, when $\beta\neq 1$, such a ``quadratic structure'' of the Gaussian kernel is no longer available. Using the positive-definiteness of $-|u-v|^{2\beta}$, we uncover a quadratic structure by bounding the kernel $K$ by another positive-definite kernel from below and by a Gamma integral representation of the power function. We call this procedure a ``comparison to a Gaussian kernel" technique.

 We start with two inequalities using positive-definiteness of integral kernels.
 \begin{lemma}\label{lemma_exp_bound}
 Let $\Phi_i:\R^d\times \R^d\to \R$ be positive-definite kernels for $i=1,2$.  Then,
 \begin{align*}
 &  \int_{\R^d} \int_{\R^d} h(u)h(v) \Phi_1(u,v) e^{\Phi_2(u,v)} dudv
 \geq  \int_{\R^d} \int_{\R^d} h(u)h(v) \Phi_1(u,v)  dudv, \\
&  \int_{\R^d} \int_{\R^d} h(u)h(v) \Phi_1(u,v) e^{\Phi_2(u,v)} dudv
 \geq  \int_{\R^d} \int_{\R^d} h(u)h(v) \Phi_1(u,v) \Phi_2(u,v)  dudv
 \end{align*}
for any $h\neq 0$, as long as the integrals exist.
 \end{lemma}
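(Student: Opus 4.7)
The plan is to expand $e^{\Phi_2}$ as a Taylor series and apply the Schur product theorem for positive-definite kernels term by term. Specifically, I would write
\[
e^{\Phi_2(u,v)} = \sum_{n=0}^{\infty} \frac{\Phi_2(u,v)^n}{n!}
\]
and substitute into the left-hand side of each inequality. Once the exchange of summation and integration is justified, this yields
\[
\int_{\R^d}\int_{\R^d} h(u)h(v)\Phi_1(u,v) e^{\Phi_2(u,v)}\, du\, dv \;=\; \sum_{n=0}^{\infty} \frac{1}{n!}\int_{\R^d}\int_{\R^d} h(u)h(v) \Phi_1(u,v) \Phi_2(u,v)^n\, du\, dv.
\]

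Next, I would invoke the Schur product theorem (recalled in Section~\ref{sec:append}), which states that the pointwise product of positive-definite kernels is again positive-definite. Since $\Phi_2$ is positive-definite by assumption, so is every power $\Phi_2^n$, and hence so is the product $\Phi_1\cdot\Phi_2^n$ for every $n\geq 0$. By the integral formulation of positive-definiteness, each summand in the series above is therefore nonnegative.

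From this, the first inequality follows by isolating the $n=0$ term of the series, which equals $\int_{\R^d}\int_{\R^d} h(u)h(v)\Phi_1(u,v)\, du\, dv$, and discarding the remaining (nonnegative) terms. The second inequality follows analogously by isolating the $n=1$ term, which equals $\int_{\R^d}\int_{\R^d} h(u)h(v)\Phi_1(u,v)\Phi_2(u,v)\, du\, dv$, and bounding the other terms (including the $n=0$ term, which is nonnegative by the positive-definiteness of $\Phi_1$) below by zero.

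The main obstacle I expect is justifying the interchange of the infinite sum with the double integral under the loose hypothesis that ``the integrals exist.'' In practice this should reduce to a dominated convergence argument with majorant $|h(u)h(v)\Phi_1(u,v)|\,e^{|\Phi_2(u,v)|}$; the hypothesis should be interpreted (or tightened) so that this majorant is integrable, after which the remainder of the argument is purely algebraic. I do not anticipate needing strict positive-definiteness anywhere, which is consistent with the lemma asserting only non-strict $\geq$.
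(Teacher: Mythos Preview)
Your proposal is correct and follows essentially the same route as the paper: Taylor-expand $e^{\Phi_2}$, use the Schur product property (Theorem~\ref{t52}) to see that $\Phi_1\Phi_2^n$ is positive-definite for every $n\ge 0$, so each integrated term is nonnegative, and then isolate the $n=0$ and $n=1$ terms respectively. The paper's proof is terser and does not comment on the interchange of sum and integral; your remark about the dominated-convergence justification is a reasonable addition.
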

 \begin{proof}
 By Theorem \ref{t52}, $\Phi_2(u,v)^n \Phi_1(u,v) $ is positive-definite for each integer $n\geq 0$, so for any $h\neq 0$ and $n\geq 0$, we have $ \int_{\R^d} \int_{\R^d} h(u)h(v) \Phi_1(u,v) \Phi_2(u,v)^ndudv >0$. Then the inequalities follow from the Taylor expansion of  $e^{\Phi_2(u,v)}$, with the first inequality keeping only the term with $n=0$ and the second inequality keeping only the term with $n=1$.
  \end{proof}


The following proposition is a counterpart of Lemma \ref{Gaussian}.
\begin{proposition}\label{prop_powerKernel}
Let $\beta\in (0,1]$ and  $p(u,v)$ be a density function in \eqref{eq:r_invPDF} with $\Phi(r)= r^{2\beta}$, i.e., $p(u,v) = \frac{1}{Z} e^{-\frac{2}{3} (|u|^{2\beta}+ |v|^{2\beta}+|u-v|^{2\beta})}$. Let  $\rho(r)$ be the distribution of $|U|$ with $(U,V)$ having a joint distribution $p(u,v)$. Then, for any $0\neq h\in L^2(\rho)$,
 \begin{align*}
I 
&=\int_{\R^d}\int_{\R^d}h(|u|)h(|v|)\frac{\langle u,v \rangle}{|u||v|}p(u,v)dudv >0.
\end{align*}

\end{proposition}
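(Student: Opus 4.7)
The plan is to implement the ``comparison to a Gaussian kernel'' strategy foreshadowed at the start of this section: represent the non-Gaussian factor $e^{-\frac{2}{3}|u-v|^{2\beta}}$ as a Bernstein-type mixture of Gaussian kernels, reduce each Gaussian piece to a series of squared moment integrals by polar coordinates and a Taylor expansion, and then deduce strict positivity through a M\"untz-type density theorem. Since $\beta\in(0,1]$, the function $s\mapsto e^{-\frac{2}{3}s^\beta}$ is completely monotone on $[0,\infty)$, so by Bernstein's theorem there is a probability measure $\nu_\beta$ on $[0,\infty)$ (equal to $\delta_{2/3}$ when $\beta=1$, and otherwise having positive continuous density on $(0,\infty)$, namely the law of a $\beta$-stable subordinator at time $2/3$) with $e^{-\frac{2}{3}|u-v|^{2\beta}}=\int_0^\infty e^{-t|u-v|^2}\,d\nu_\beta(t)$. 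Substituting into $I$ and swapping integrals by Fubini gives $I=\int_0^\infty J_t\,d\nu_\beta(t)$, where $J_t$ is obtained from the defining integral of $I$ by replacing $e^{-\frac{2}{3}|u-v|^{2\beta}}$ with $e^{-t|u-v|^2}$.

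For each fixed $t$ I would write $e^{-t|u-v|^2}=e^{-t(|u|^2+|v|^2)}\,e^{2t\langle u,v\rangle}$, pass to polar coordinates $u=r\xi$, $v=s\eta$, and Taylor-expand $e^{2trs\langle\xi,\eta\rangle}$ in the sphere variables. Setting $b_k:=\int_{S^{d-1}\times S^{d-1}}\langle\xi,\eta\rangle^k\,\tfrac{d\xi\,d\eta}{|S^{d-1}|^2}$, symmetry gives $b_k=0$ for odd $k$ and $b_k>0$ for even $k$, so the extra factor of $\langle\xi,\eta\rangle$ in the integrand kills all but the terms whose total exponent on $\langle\xi,\eta\rangle$ is even, and the survivors reassemble into
\[
J_t \;=\; \frac{|S^{d-1}|^2}{Z}\sum_{m=0}^\infty \frac{(2t)^{2m+1}\,b_{2m+2}}{(2m+1)!}\left(\int_0^\infty h(r)\,r^{d+2m}\,e^{-\frac{2}{3}r^{2\beta}-tr^2}\,dr\right)^{\!\!2}\;\geq\;0.
\]
In particular $I\geq 0$.

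Suppose $I=0$. Then $J_t=0$ for $\nu_\beta$-almost every $t$, and since $\mathrm{supp}(\nu_\beta)\cap(0,\infty)\neq\emptyset$ and $t\mapsto J_t$ is continuous, I can fix some $t_0>0$ with $J_{t_0}=0$. Non-negativity of every summand in the series for $J_{t_0}$ then forces
\[
\int_0^\infty h(r)\,r^{d+2m}\,e^{-\frac{2}{3}r^{2\beta}-t_0 r^2}\,dr\;=\;0 \qquad\text{for all } m=0,1,2,\ldots,
\]
so $g(r):=h(r)\,r^d\,e^{-\frac{2}{3}r^{2\beta}-t_0 r^2}$ is orthogonal to every even monomial in $L^2(\R^+,dr)$. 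I would then invoke the M\"untz-type theorem (Lemma~\ref{lemma:Muntz}), applied after a Gaussian reweighting that places $g$ in a weighted $L^2$-space in which the even polynomials $\{r^{2m}\}_{m\geq 0}$ are dense, to conclude $g\equiv 0$ and hence $h=0$ in $L^2(\rho)$.

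The delicate point is this last step: applying the M\"untz-type density requires verifying that the reweighted $g$ lies in the appropriate weighted $L^2$-space starting only from $h\in L^2(\rho)$, which in turn needs a tail estimate on the marginal density $q(r)$. I would obtain this estimate by Laplace's method applied to $\int_{\R^d}e^{-\frac{2}{3}(|v|^{2\beta}+|u-v|^{2\beta})}\,dv$, yielding $q(r)\asymp r^{d-1}e^{-\frac{2}{3}(1+2^{1-2\beta})r^{2\beta}}$ for large $r$; matching this tail with the Gaussian reweighting is what effectively drives the parameter range on $\beta$. The Fubini swap in the Bernstein step and the termwise integration of the spherical Taylor series are routine in comparison, following from absolute convergence once the exponential envelopes are accounted for.
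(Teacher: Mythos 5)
Your proposal is correct and pursues the same broad strategy as the paper (reduce to a Gaussian mixture, polar coordinates, Taylor expansion in $\langle\xi,\eta\rangle$, then a M\"untz-type completeness argument), but the mechanism you use to produce the Gaussian mixture is genuinely different and somewhat cleaner. The paper first factors $p(u,v)\propto e^{-2|u|^{2\beta}}e^{-2|v|^{2\beta}}e^{\Phi_2(u,v)}$ with $\Phi_2=|u|^{2\beta}+|v|^{2\beta}-|u-v|^{2\beta}$ positive-definite, invokes Lemma~\ref{lemma_exp_bound} to replace $e^{\Phi_2}$ by its linear term $\Phi_2$ (a one-sided inequality), uses odd symmetry of $\langle u,v\rangle$ to drop $|u|^{2\beta}+|v|^{2\beta}$, and only then applies the Gamma-integral representation $|u-v|^{2\beta}=\frac{\beta}{\Gamma(1-\beta)}\int_0^\infty(1-e^{-\lambda|u-v|^2})\lambda^{-\beta-1}d\lambda$. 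You instead invoke Bernstein's theorem once, writing $e^{-\frac23|u-v|^{2\beta}}=\int_0^\infty e^{-t|u-v|^2}d\nu_\beta(t)$ with $\nu_\beta$ the law of a $\beta$-stable subordinator at time $2/3$, which gives an exact identity $I=\int J_t\,d\nu_\beta(t)$ rather than a chain of lower bounds. Both routes land on essentially the same series of squared one-dimensional moments; indeed your exponent $r^{d+2m}$ is the correct one (the paper's $r^{d+2n+1}$ in the displayed chain has an off-by-one slip).

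Two further remarks. First, your ending move, namely observe $J_t\ge 0$, deduce $J_t=0$ for $\nu_\beta$-a.e.\ $t$, pick a single $t_0>0$ by continuity of $t\mapsto J_t$, and then force every inner integral to vanish, is actually the rigorous way to close the argument; the paper's final displayed inequality
\[
I\ge \sum_n \tilde C_{n,\beta}\Bigl[\int_0^\infty h(r)e^{-2r^{2\beta}-2r^2}r^{\cdots}\,dr\Bigr]^2
\]
does not literally follow from its predecessor (the variable $\lambda$ is still under the integral there), and the intended reading is exactly the argument you gave. Second, your aside that the tail matching ``drives the parameter range on $\beta$'' is not quite right: the constraint $\beta\le 1$ is what makes $s\mapsto e^{-\frac23 s^\beta}$ completely monotone (equivalently, what makes $|u-v|^{2\beta}$ negative definite, Lemma~\ref{l31}), and that is the binding constraint here; the tail estimate on $q(r)$ that you need, namely $q(r)\asymp r^{\mathrm{poly}}e^{-c_\beta r^{2\beta}}$ with $c_\beta>0$, holds for every $\beta\in(0,1]$ (for $\beta<\tfrac12$ the Laplace maximizer sits at an endpoint rather than the midpoint, but the exponential order is still $r^{2\beta}$), and the Gaussian reweighting $e^{-t_0r^2}$ then dominates the tail uniformly over $\beta\in(0,1]$.
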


 \begin{proof} The factor $\frac{2}{3}$ and the normalizing constant $Z$ do not play a role in the above inequality, so we omit them in the following proof. We only consider the case $\beta<1$, since when $\beta=1$,  the Gaussian distribution neatly ensures strict positivity of the integral operator through Taylor expansion of $\inp{u,v}e^{c\inp{u,v}}$. Note that
 \begin{align*}
I &=\int_{\R^d}\int_{\R^d}h(|u|)e^{-2|u|^{2\beta}}h(|v|)e^{-2|v|^{2\beta}}\frac{\langle u,v \rangle}{|u||v|}e^{|u|^{2\beta}+ |v|^{2\beta}-|u-v|^{2\beta}}dudv\\
&= \int_{\R^d}\int_{\R^d} \widetilde h(|u)\widetilde h(|v|)\frac{\langle u,v \rangle}{|u||v|}e^{\Phi_2(u,v)}dudv,
 \end{align*}
 where $\widetilde h(r) :=h(r)e^{-2r^{2\beta}}$ and
\begin{eqnarray*}
\Phi_2(u,v) : =|u|^{2\beta}+|v|^{2\beta}-|u-v|^{2\beta}.
\end{eqnarray*}
By Lemma \ref{l31}, $|u-v|^{2\beta}$ is negative definite. Then, by Theorem \ref{tpn}, $\Phi_2(u,v)$ is positive-definite. Thus, by Lemma \ref{lemma_exp_bound} with $\Phi_1(u,v) = \inp {u,v}$ and $\Phi_2(u,v)$ as above, we have
\begin{align}
I &\geq \int_{\RR^d}\int_{\RR^d}\widetilde h(|u)\widetilde h(|v|)\frac{\langle u,v \rangle}{|u||v|}\left(|u|^{2\beta}+|v|^{2\beta}-|u-v|^{2\beta}\right)dudv
\nonumber \\
&=-\int_{\RR^d}\int_{\RR^d}\widetilde h(|u)\widetilde h(|v|)\frac{\langle u,v \rangle}{|u||v|}|u-v|^{2\beta}dudv =:\widetilde I  , \label{eq:|u-v|2beta}
\end{align}
where in the equality we dropped the term $|u|^\beta+|v|^\beta$, due to symmetry of $\langle u,v \rangle$,
\[\int_{\R^d}\int_{\R^d}g_1(|u|)g_2(|v|)\frac{\langle u,v \rangle}{|u||v|} =0 \] for any $g_1,g_2\in L^2(\rho)$. We shall use this property several times in the following.

Next, we use Gamma function to bound 
$\widetilde I $ in \eqref{eq:|u-v|2beta} from below by a Gaussian kernel as in the previous section.
Note that for any $x>0$ and $\beta< 1$, 
\[
|u-v|^{2\beta} = \frac{\beta}{\Gamma(1-\beta)}\int_0^{\infty}(1-e^{-\lambda |u-v|^{2}})\frac{d\lambda}{\lambda^{\beta+1}}.
\]
Plugging this into the integral in \eqref{eq:|u-v|2beta}, and using the symmetry of $\langle u,v\rangle$ again, we obtain
\begin{eqnarray*}
\widetilde I  &= & \frac{\beta}{\Gamma(1-\beta)}\int_0^{\infty}\int_{\RR^d}\int_{\RR^d}\widetilde h(|u)\widetilde h(|v|)\frac{\langle u,v \rangle}{|u||v|}(e^{-\lambda|u-v|^2}-1)dudv \frac{d\lambda}{\lambda^{\beta+1}}\\
&=&\frac{\beta}{\Gamma(1-\beta)}\int_0^{\infty}\int_{\RR^d}\int_{\RR^d}\widetilde h(|u)\widetilde h(|v|)\frac{\langle u,v \rangle}{|u||v|}e^{-\lambda|u-v|^2}dudv \frac{d\lambda}{\lambda^{\beta+1}}
\\ &=&\frac{\beta}{\Gamma(1-\beta)}\int_0^{\infty}\int_{\RR^d}\int_{\RR^d}\widetilde h(|u|) \widetilde h(|v|) e^{-\lambda (|u|^2 +|v|^2) } \frac{\langle u,v \rangle}{|u||v|} e^{2\lambda \langle u,v \rangle }dudv \frac{d\lambda}{\lambda^{\beta+1}}.
\end{eqnarray*}
By the symmetry of $\langle u,v\rangle $ and Taylor expansion of $e^{2\lambda \langle u,v \rangle }$, we have
\begin{eqnarray*}
\widetilde I  &= & \frac{\beta}{\Gamma(1-\beta)}\int_0^{\infty}\int_{\RR^d}\int_{\RR^d}\widetilde h(|u|) \widetilde h(|v|)e^{-\lambda (|u|^2 +|v|^2) }  \frac{\langle u,v \rangle}{|u||v|}\sum_{n=0}^{\infty}\frac{\langle u,v \rangle^{2n+1}}{(2n+1)!}dudv 2^{2n+1}\lambda^{2n-\beta}d\lambda \\
&=& \sum_{n=0}^{\infty}\frac{\beta}{\Gamma(1-\beta)}\int_0^{\infty}\int_{0}^{\infty}\int_{0}^{\infty}\widetilde h(r) \widetilde h(s) e^{-\lambda (r^2 +s^2) }(rs)^{d+2n+1} C_n dr ds \lambda^{2n-\beta} d\lambda,
\end{eqnarray*}
where we denote $C_n := \int_{\xi\in S^{d-1}}\int_{\eta\in S^{d-1}}\frac{2^{2n+1}\langle \xi,\eta \rangle^{2n+2}}{(2n+1)!} d\xi d\eta$,
which is positive. Thus,
\begin{eqnarray*}
\widetilde I  &= &\sum_{n=0}^{\infty}\frac{C_n\beta}{\Gamma(1-\beta)}\int_0^{\infty}\lambda^{2n-\beta}\left[\int_{0}^{\infty}\widetilde h(r)e^{-\lambda r^2 } r^{d+2n+1}dr\right]^2 d\lambda\\
&\geq&\sum_{n=0}^{\infty}\frac{C_n\beta}{\Gamma(1-\beta)}\int_1^2\lambda^{2n-\beta} d\lambda \left[\int_{0}^{\infty}\widetilde h(r) e^{-\lambda r^2 }r^{d+2n+1}dr\right]^2.
\end{eqnarray*}
Note that
$\tilde{C}_{n,\beta}=\frac{C_n\beta}{\Gamma(1-\beta)}\int_1^2\lambda^{2n-\beta} d\lambda>0$ for each $n\geq 0$. Combing all the above, we have
\begin{eqnarray*}
I\geq \sum_{n=0}^{\infty}\tilde{C}_{n,\beta}\left[\int_{0}^{\infty}h(r)e^{-2r^{2\beta}-2 r^2}r^{d+2n+1}dr\right]^2,
\end{eqnarray*}
which is positive if $h\neq 0 \in L^2(f)$ with $f(r):=r^{d+1}e^{-2r^{2\beta}-2 r^2}$, because by Lemma \ref{lemma:Muntz}, the set of functions ${\rm span}\{1,r^2, r^4,\cdots\}$ is complete in $L^2(\R^+, f)$. Note that $\mathrm{supp}\,f = \mathrm{supp }\,\rho = \R^+$, so $h\neq 0 \in L^2(f)$ when $h\neq 0 \in L^2(\rho)$.
\end{proof}

\bigskip
\begin{proof}[Proof of Theorem \ref{Thm:cc_general} with $\Phi(r)=r^{2\beta}$] Since $\beta\in [1/2,1]$ and the density of $(\br_{12}^0,\br_{13}^0)$ is $p(u,v)$, it follows from Proposition \ref{prop_staionary_r} that the process $(\br_{12}^t,\br_{13}^t)$ is stationary  with distribution $p(u,v)$. Then, the coercivity condition is equivalent to that
\[
I : = \E[h(|\br_{12}^t|)h(|\br_{13}^t|)\frac{\innerp{\br_{12}^t}{\br_{13}^t}}{|\br_{12}^t||\br_{13}^t|}]>0
\]
for any $h\neq 0\in L^2(\rho)$, where $\rho$ is the stationary probability density of $|\br_{12}^t|$.
Note that
 \begin{align*}
I 
&=\frac{1}{Z}\int_{\R^d}\int_{\R^d}h(|u|)h(|v|)\frac{\langle u,v \rangle}{|u||v|}e^{-\frac{2}{3} (|u|^{2\beta}+ |v|^{2\beta}+|u-v|^{2\beta}) }dudv.
\end{align*}
Then we can conclude the theorem by Proposition \ref{prop_powerKernel}.
\end{proof}

\begin{remark}
We point out that the requirement $\beta \in (0,1]$ is to ensure that the stationary density $p(u,v)$ is a positive-definite kernel. When $\beta >1$, the above method does no longer work, because $p(u,v)$ is not positive-definite as shown in Lemma {\rm \ref{l31}}. The requirement $\beta \geq \frac{1}{2}$ is to ensure that the drift term is continuous, so that a strong solution exists. When $\beta <\frac{1}{2}$, the drift is moderately singular, the existence of a solution is open {\rm \cite{skorokhod1996_RegularityManyparticle,albeverio2003_StrongFeller}}.
\end{remark}

\subsection{General interaction potentials}\label{sec:genFn}
The ``comparison to a Gaussian kernel'' technique in Lemma \ref{lemma_exp_bound} and Proposition \ref{prop_powerKernel} can be generalized to prove the coercivity condition for a large class of interaction functions. The following lemma provides the key element in such a generalization.

\begin{lemma}\label{lemma:transform_to_positve}
Let $\Phi$ be a potential in \eqref{eq:Phi_nonlinear} and let $\rho(r)$ be the distribution of $|U|$ with $(U,V)$ having a joint distribution $p(u,v)= \frac{1}{Z} e^{-\frac{2}{3}  [\Phi(|u|)+ \Phi(|v|)+\Phi(|u-v|) ]  }$.
Then,
 \begin{align*}
I 
&=\int_{\R^d}\int_{\R^d}h(|u|)h(|v|)\frac{\langle u,v \rangle}{|u||v|}e^{-[\Phi(|u|)+ \Phi(|v|)+\Phi(|u-v|) ] }dudv >0
\end{align*}
for any $0\neq h\in L^2(\rho)$.
\end{lemma}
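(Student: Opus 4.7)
The plan is to reduce to the pure power case of Proposition \ref{prop_powerKernel} by peeling off the perturbation $\Phi_0$, making repeated use of the fact that sums, products, and exponentials of positive-definite kernels are positive-definite.

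First, I would decompose the exponent as $-[\Phi(|u|)+\Phi(|v|)+\Phi(|u-v|)] = -2\Phi(|u|)-2\Phi(|v|) + \Phi_2(u,v)$, where $\Phi_2(u,v) := \Phi(|u|)+\Phi(|v|)-\Phi(|u-v|) = a\Phi_2^{(1)}(u,v) + \Phi_2^{(0)}(u,v)$, with $\Phi_2^{(1)}(u,v) := |u|^{2\beta}+|v|^{2\beta}-|u-v|^{2\beta}$ and $\Phi_2^{(0)}(u,v) := \Phi_0(|u|)+\Phi_0(|v|)-\Phi_0(|u-v|)$ (after the harmless normalization $\Phi_0(0)=0$, which rescales $I$ by a positive constant). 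Lemma \ref{l31} makes $|u-v|^{2\beta}$ negative definite, and by hypothesis $\Phi_0(|u-v|)$ is negative definite; Theorem \ref{tpn} then yields that both $\Phi_2^{(1)}$ and $\Phi_2^{(0)}$ are positive-definite kernels on $\R^d\times\R^d$. Setting $\tilde h(r) := h(r)e^{-\Phi(r)}$ and $\Phi_1(u,v) := \frac{\langle u,v\rangle}{|u||v|}\,e^{a\Phi_2^{(1)}(u,v)}$ (a Schur product of two positive-definite kernels, hence positive-definite by Theorem \ref{t52}, since $e^{a\Phi_2^{(1)}}$ is positive-definite as the exponential of a positive-definite kernel), the integral rewrites as $I = \int \tilde h(|u|)\tilde h(|v|)\,\Phi_1(u,v)\,e^{\Phi_2^{(0)}(u,v)}\,du\,dv$. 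The first inequality of Lemma \ref{lemma_exp_bound} then gives $I \ge I_0 := \int \tilde h(|u|)\tilde h(|v|)\,\frac{\langle u,v\rangle}{|u||v|}\,e^{a\Phi_2^{(1)}(u,v)}\,du\,dv$.

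Next, I would introduce $\hat h(r) := \tilde h(r)\,e^{ar^{2\beta}} = h(r)\,e^{-\Phi_0(r)}$; absorbing the factors $e^{a|u|^{2\beta}+a|v|^{2\beta}}$ into the new weights rewrites $I_0 = \int \hat h(|u|)\hat h(|v|)\,\frac{\langle u,v\rangle}{|u||v|}\,e^{-a|u-v|^{2\beta}}\,du\,dv$, which is (up to constants) precisely the integral treated in Proposition \ref{prop_powerKernel}, with $h$ there replaced by $\hat h$. Running the proof of that proposition verbatim — use Bernstein's theorem or the Gamma integral representation to express $e^{-a|u-v|^{2\beta}}$ as a superposition of Gaussian kernels $e^{-\lambda|u-v|^2}$, Taylor-expand the cross term $e^{2\lambda\langle u,v\rangle}$, invoke the symmetry of $\langle u,v\rangle$ to retain only the odd terms, and separate radial and angular integrations — expresses $I_0$ as a series of non-negative contributions of the form $\kappa_n \int_\Lambda \lambda^{\alpha_n}\bigl[\int_0^\infty \hat h(r)\,e^{-\lambda r^2}\,r^{d+2n+1}\,dr\bigr]^2\,d\mu(\lambda)$ with $\kappa_n>0$ and $\Lambda\subset (0,\infty)$ of positive measure. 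Because $e^{-\Phi_0}$ is strictly positive pointwise, $h\neq 0$ in $L^2(\rho)$ forces $\hat h \neq 0$ almost everywhere; the M\"untz-type density theorem (Lemma \ref{lemma:Muntz}) applied to the Gaussian weight $r^{d+1}e^{-\lambda r^2}$ then rules out simultaneous vanishing of the inner integrals for $\lambda$ in a set of positive measure, so $I_0>0$ and hence $I>0$.

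The main obstacle is controlling the convergence of the Taylor / Fubini manipulations in this perturbed setting, and verifying that the M\"untz completeness hypothesis is indeed met for the natural weight in which $\hat h$ lives — the weight splits into a Gaussian factor from $e^{-\lambda r^2}$ and the factor $e^{-\Phi_0}$. This is where the growth assumption $\lim_{r\to\infty}\Phi(r) = +\infty$ is used: it supplies the moment bounds needed for all the integrals to be finite and for $\hat h$ to belong to the relevant weighted $L^2$ space, so that the density argument can be invoked.
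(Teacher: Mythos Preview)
Your proposal is correct and follows essentially the same approach as the paper: use Theorem \ref{tpn} to turn the negative definiteness of $\Phi_0(|u-v|)$ into positive definiteness of $\Phi_0(|u|)+\Phi_0(|v|)-\Phi_0(|u-v|)$, apply Lemma \ref{lemma_exp_bound} to peel off the $\Phi_0$ contribution, and reduce to Proposition \ref{prop_powerKernel}. The only difference is cosmetic bookkeeping---the paper absorbs $e^{-\Phi_0}$ into $\tilde h$ at the outset and keeps $e^{-a|u-v|^{2\beta}}$ in $\Phi_1$, whereas you first keep $e^{a\Phi_2^{(1)}}$ in $\Phi_1$ and then redistribute, arriving at the same final integral with a slightly different weight on $\hat h$; both routes are valid.
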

\begin{proof}
Rewrite the integral as
\begin{align*}
I  &=\int_{\R^d}\int_{\R^d}[h(|u|)e^{-\frac{2}{3}\Phi(|u|)}][h(|v|)e^{-\frac{2}{3}\Phi(|v|)}]\frac{\langle u,v \rangle}{|u||v|}e^{-\frac{2}{3}a|u-v|^{2\beta}-\frac{2}{3}\Phi_0(|u-v|)}dudv\\
&=\int_{\R^d}\int_{\R^d}\widetilde h(|u|)\widetilde h(|v|)\frac{\langle u,v \rangle}{|u||v|}e^{-\frac{2}{3}a |u-v|^{2\beta}+\frac{2}{3}\widetilde \Phi(|u-v|)}dudv,
\end{align*}
where $\widetilde h(r)= h(r)e^{-\frac{2}{3}\Phi(r) -\frac{2}{3}\Phi_0(r)}$ and
\begin{eqnarray*}
\widetilde{\Phi}(u,v):=\Phi_0(|u|)+\Phi_0(|v|)-\Phi_0(|u-v|).
\end{eqnarray*}
Since $\Phi_0(|u-v|)$ is negative definite, by Theorem \ref{pdm}, $\widetilde{\Phi}(u,v)$ is positive-definite. Also, by Lemma \ref{l31}, $\inp{u,v}e^{-\frac{2}{3}a |u-v|^{2\beta} }$ is positive-definite.
Hence, by Lemma \ref{lemma_exp_bound}, we have
\begin{align*}
I &\geq \int_{\R^d}\int_{\R^d}\widetilde h(|u|)\widetilde h(|v|)\frac{\langle u,v \rangle}{|u||v|}e^{-\frac{2}{3}a |u-v|^{2\beta}}dudv.
\end{align*}
Then the strictly positive-definiteness follows from Proposition \ref{prop_powerKernel}.
\end{proof}


\bigskip
We can now conclude the proof of Theorem  \ref{Thm:cc_general}. \\
\begin{proof}[Proof of Theorem \ref{Thm:cc_general} ] The case when $\Phi(r) = r^{2\beta}$ is proved in the previous section.
For general potential $\Phi$, the proof is similar. In fact, since $\beta\in[1/2,1]$ and $\Phi_0$ is smooth, the process $(\br_{12}^t,\br_{13}^t)$ is stationary with invariant density $p(u,v)$.  Then, $\E[h(|\br_{12}^t|)h(|\br_{13}^t|)\frac{\innerp{\br_{12}^t}{\br_{13}^t}}{|\br_{12}^t||\br_{13}^t|}]$ is the same as the integral $I$ in Lemma \ref{lemma:transform_to_positve}, so the coercivity condition follows. We conclude the proof.
\end{proof}

\bigskip
We provide a few examples of negative definite radial kernels and the related positive kernels.
\begin{lemma}\label{lem:examplesND}
For $0<\alpha\leq 2$, $0<\gamma\leq 1$ and $a\geq 0$, the following kernels are negative definite:
\begin{align*}
\Phi_1(|u-v|) &= (a+|u-v|^{\alpha})^{\gamma};\\
\Phi_2(|u-v|) &= \log [1+ (a+|u-v|^{\alpha})^{\gamma}].
\end{align*}
For any $c>0$ and any integer $k\geq 1$, the following kernels are positive-definite:
\[ e^{-c\Phi_1(|u-v|)}, \, e^{-c \Phi_2(|u-v|)},\, \Phi_2(|u-v|)^{-k}. \]

\end{lemma}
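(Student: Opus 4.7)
The plan is to treat $\Phi_1$ first, derive the negative-definiteness of $\Phi_2$ via a Bernstein-function composition, and finally combine Schoenberg's theorem with a Gamma integral representation to obtain the three positive-definite kernels.

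For $\Phi_1$ I would argue in three short steps. First, $|u-v|^{\alpha}$ is negative-definite for $\alpha\in(0,2]$: the case $\alpha=2$ is established in Lemma \ref{l31}, and the general case follows from Theorem \ref{t54}, since raising a non-negative negative-definite kernel to a power $\gamma\in(0,1]$ preserves negative-definiteness. Second, adding the non-negative constant $a$ gives another non-negative negative-definite kernel (sums of negative-definite kernels are negative-definite, and a constant kernel is negative-definite). Third, applying Theorem \ref{t54} once more with exponent $\gamma\in(0,1]$ yields that $\Phi_1$ is negative-definite.

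For $\Phi_2$, the idea is to observe that $f(x)=\log(1+x)$ is a Bernstein function on $[0,\infty)$ with $f(0)=0$, since $f'(x)=1/(1+x)$ is completely monotone. A classical result states that if $\psi$ is a non-negative negative-definite kernel and $f$ is a Bernstein function with $f(0)\ge 0$, then $f\circ\psi$ is again negative-definite (see \cite{BCR84}). The cleanest way to present this is via the integral representation
\[
\log(1+x) \;=\; \int_0^\infty \bigl(1-e^{-tx}\bigr)\,\frac{e^{-t}}{t}\,dt,
\]
which reduces the claim to the fact that $1-e^{-t\Phi_1(|u-v|)}$ is negative-definite for each $t>0$. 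This last fact is immediate from Schoenberg's theorem (Theorem \ref{t53}): $e^{-t\Phi_1}$ is positive-definite, so its negative is negative-definite, and adding the positive-definite constant kernel $1$ preserves negative-definiteness. Integrating against the positive measure $e^{-t}dt/t$ then gives that $\Phi_2=\log(1+\Phi_1)$ is negative-definite.

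For the three positive-definite claims, the kernels $e^{-c\Phi_1}$ and $e^{-c\Phi_2}$ are positive-definite by Schoenberg's theorem (Theorem \ref{t53}) applied to the non-negative negative-definite kernels $\Phi_1$ and $\Phi_2$ and any $c>0$. For $\Phi_2^{-k}$, I would use the Gamma representation
\[
\Phi_2(|u-v|)^{-k} \;=\; \frac{1}{\Gamma(k)}\int_0^\infty t^{k-1}\,e^{-t\Phi_2(|u-v|)}\,dt,
\]
valid wherever $\Phi_2>0$. Since each integrand $e^{-t\Phi_2}$ is positive-definite and the integral is a non-negative mixture, $\Phi_2^{-k}$ is positive-definite as well. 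The main obstacle is the Bernstein-function composition step for $\log(1+\Phi_1)$: although classical, it is not stated as a separate theorem in the excerpt, so I would either cite \cite{BCR84} or spell out the short self-contained argument above. One should also note the mild caveat that $\Phi_2^{-k}$ is only defined off the diagonal unless $a>0$, which ensures $\Phi_2(|u-v|)\ge \log(1+a^\gamma)>0$ uniformly.
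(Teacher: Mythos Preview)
Your proof is correct and follows essentially the same route as the paper. The one point you overlooked is that Theorem~\ref{t54} already contains the clause ``\,$\log(1+\psi)$ is negative-definite whenever $\psi$ is non-negative negative-definite,'' so the paper simply cites it for $\Phi_2$ rather than unwinding a Bernstein-function argument; for $\Phi_2^{-k}$ the paper uses the $k=1$ integral $\int_0^\infty e^{-s\Phi_2}\,ds=\Phi_2^{-1}$ together with closure under products, which is just a special case of your Gamma representation.
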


\begin{proof}By Lemma \ref{l31}, if $0\leq \alpha \leq 2$, then $|u-v|^{\alpha}$ is a negative definite kernel. By definition of a negative definite kernel $a+|u-v|^{\alpha}$ is also negative definite for any $a\in \RR$.
By Theorem \ref{t54}, $\Phi_1(|u-v|) = (a+|u-v|^{\alpha})^{\gamma}$ is also a negative definite kernel when $0<\gamma\leq 1$ and $a\geq 0$.

Since $\Phi_1(|u-u|)=a^{\gamma}\geq 0$, by Theorem \ref{t54}, $\log(1+\Phi_1(|u-v|))=\Phi_2(|u-v|)$ is negative definite.

The positive-definiteness of $e^{-c\Phi_1(|u-v|)}$ and  $e^{-c \Phi_2(|u-v|)}$ follows directly from Theorem \ref{t53}.
The kernel $ \Phi_2(|u-v|)^{-k}$ is positive-definite because
\begin{eqnarray*}
\int_{0}^{\infty}e^{-s \Phi_2(|u-v|) }  ds =\frac{1}{ \Phi_2(|u-v|)}
\end{eqnarray*}
and because that the product of positive-definite kernels are positive-definite.
\end{proof}

\begin{proposition} Assume that the series
\begin{eqnarray}
\Phi_1(r)&=&c_0+\sum_{j=1}^{\infty}c_j\log\left[1+(a_j+r^{\alpha_j})^{\gamma_j}\right]
-\sum_{j=-1}^{-\infty}c_j[\log(1+(a_j+r^{\alpha_j})^{\gamma_j})]^{-k_j}\label{ps1}\\
\Phi_2(r)&=&\sum_{i=1}^{\infty}c_i'[(a_i'+r^{\alpha'_i})^{\gamma_i'}]-\sum_{i=-1}^{-\infty}c_i'[1+(a_i'+r^{\alpha'_i})^{\gamma_i'}]^{-\beta i}\label{ps2}
\end{eqnarray}
converge for every $r\in \R^+$, where the coefficients satisfy the following conditions
\begin{enumerate}
\item $a_j\geq 0, a'_i\geq 0, c_j\geq 0, c'_i\geq 0$ for $i,j\neq 0$ and
\item $0<\gamma_i\leq 1$, $ \alpha_j, \alpha'_i\in [1,2]$ for $i,j\neq 0$, and
\item $\beta_j>0$ and $k_j\geq 1$ is a positive integer for each $j$.
 \end{enumerate}
 Let $K: \R^+\times \R^+\to\R$ be an integral kernel defined in \eqref{kernelK_inv} with $p(u,v)$ defined in \eqref{eq:r_invPDF} and with
 \begin{eqnarray}
\Phi(r)&=&\Phi_1(r)+\Phi_2(r). \label{ph}
\end{eqnarray}
Then  $K(r,s)$ is a positive-definite kernel. Furthermore, if there exists $i_0\geq 1$, such that
\begin{align}
a_{i_0}'=0,\ \gamma_{i_0}'=1,\ \mathrm{and}\ c_{i_0}'>0,\label{ccb}
\end{align}
then the coercivity condition holds true on $L^2(\rho)$ for the system \eqref{eq:sys_grad} with potential $\Phi$ in \eqref{ph}, if it starts from $(\br_{12}^0,\br_{13}^0)$ with a joint density $p(u,v)$ in \eqref{eq:r_invPDF}, where $\rho$ is the distribution of $|\br_{12}^0|$.
\end{proposition}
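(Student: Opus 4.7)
The plan is to reduce the proposition to Theorem \ref{Thm:cc_general} for the coercivity statement and to a Schoenberg-plus-Bochner argument for the positive-definiteness of $K(r,s)$. The structural input is to show that $\Phi(|u-v|)$ is a negative definite kernel on $\R^d\times\R^d$ and that it remains so after removing the single summand singled out by condition \eqref{ccb}.

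First I would carry out a term-by-term analysis of the series \eqref{ps1}--\eqref{ps2}. The positive-coefficient summands contribute negative definite kernels directly: constants are trivially negative definite, while $(a_i' + |u-v|^{\alpha_i'})^{\gamma_i'}$ and $\log[1+(a_j + |u-v|^{\alpha_j})^{\gamma_j}]$ are negative definite by Lemma \ref{lem:examplesND}. The subtracted summands are handled by showing that $[\log(1+(a_j+|u-v|^{\alpha_j})^{\gamma_j})]^{-k_j}$ and $[1+(a_i'+|u-v|^{\alpha_i'})^{\gamma_i'}]^{-\beta_i}$ are positive-definite: the first is covered by Lemma \ref{lem:examplesND}, and the second follows from the Gamma representation
$$(1+F)^{-\beta_i}=\frac{1}{\Gamma(\beta_i)}\int_0^\infty t^{\beta_i-1}e^{-t(1+F)}\,dt$$
applied to $F(|u-v|)=(a_i'+|u-v|^{\alpha_i'})^{\gamma_i'}$, using that $F\geq 0$ is negative definite, so $e^{-t(1+F)}$ is positive-definite for every $t>0$ by Theorem \ref{t53}, and an integral mixture of positive-definite kernels is positive-definite. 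Negating gives negative definite contributions, and since sums (with nonnegative coefficients) and pointwise limits preserve negative definiteness, $\Phi(|u-v|)$ itself is negative definite.

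For the first assertion, $K(r,s)$ positive-definite, I would invoke $e^{-\frac{2}{3}\Phi(|u-v|)}$ positive-definite (Theorem \ref{t53}) and write it as the Fourier transform of a non-negative measure $\mu$, $e^{-\frac{2}{3}\Phi(|u-v|)}=\int_{\R^d}e^{i\langle\xi,u-v\rangle}d\mu(\xi)$. Using $\frac{\langle u,v\rangle}{|u||v|}=\sum_{k=1}^d\frac{u_k}{|u|}\frac{v_k}{|v|}$ and separating variables yields, for every $h\in L^2(\rho)$,
$$\langle Qh,h\rangle_{L^2(\rho)}=\frac{1}{Z}\sum_{k=1}^d\int_{\R^d}\Bigl|\int_{\R^d} h(|u|)\frac{u_k}{|u|}e^{i\langle\xi,u\rangle}e^{-\frac{2}{3}\Phi(|u|)}\,du\Bigr|^2 d\mu(\xi)\geq 0,$$
so $Q$ is positive on $L^2(\rho)$ and $K(r,s)$ is a positive-definite kernel.

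For the coercivity conclusion under \eqref{ccb}, set $a:=c_{i_0}'>0$ and $2\beta:=\alpha_{i_0}'\in[1,2]$, so $\beta\in[\tfrac12,1]$, and define $\Phi_0(r):=\Phi(r)-a r^{2\beta}$ by deleting the $i_0$-th summand. The term-by-term argument above, applied without that summand, shows $\Phi_0(|u-v|)$ is negative definite. Smoothness $\Phi_0\in C^2(\R^+,\R)$ follows from the smoothness of the remaining summands (absolute and uniform convergence on compact subsets of $\R^+$ lets one differentiate term-by-term), and $\lim_{r\to\infty}\Phi(r)=+\infty$ because the subtracted tails in \eqref{ps1}--\eqref{ps2} are uniformly bounded as $r\to\infty$ while the added terms are nonnegative and at least as large as $c_{i_0}'r^{\alpha_{i_0}'}\to\infty$. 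The decomposition now matches the form \eqref{eq:Phi_nonlinear}, so Theorem \ref{Thm:cc_general} applies and delivers the coercivity condition on $L^2(\rho)$.

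The main obstacle I expect is making the infinite-series manipulations rigorous: justifying that pointwise convergence of the series together with closure of negative definite kernels under limits yields negative definiteness of the full $\Phi(|u-v|)$, and checking enough regularity (absolute/uniform convergence on compacts of $\R^+$) to carry the $C^2$ smoothness of $\Phi_0$ and the growth control $\lim_{r\to\infty}\Phi(r)=+\infty$. Once these analytic details are settled, the rest is a clean assembly of Lemma \ref{lem:examplesND}, Theorem \ref{t53}, and Theorem \ref{Thm:cc_general}.
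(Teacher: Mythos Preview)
Your proof is correct and follows the same overall skeleton as the paper: establish that $\Phi(|u-v|)$ is negative definite term-by-term via Lemma~\ref{lem:examplesND}, deduce positive-definiteness of $K$, then obtain coercivity by recognizing the decomposition $\Phi=ar^{2\beta}+\Phi_0$ of \eqref{eq:Phi_nonlinear}. The differences are in execution. For the positive-definiteness of $K$, the paper's (terse) argument relies on the product rules of Theorem~\ref{t52}: once $e^{-\frac{2}{3}\Phi(|u-v|)}$ is positive-definite, so is $p(u,v)$ (product with $f(u)f(v)$), and then $\frac{\langle u,v\rangle}{|u||v|}p(u,v)$; integrating against $h(|u|)h(|v|)$ gives $\langle Qh,h\rangle\geq 0$ by Theorem~\ref{t52}(7). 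Your Bochner/Fourier route reaches the same conclusion and has the virtue of making the sum-of-squares structure explicit, at the cost of needing continuity of $w\mapsto e^{-\frac{2}{3}\Phi(|w|)}$ to invoke Bochner. For coercivity, the paper cites Lemma~\ref{lemma:transform_to_positve} directly while you go through Theorem~\ref{Thm:cc_general}; since the latter's proof is precisely an invocation of the former, these are equivalent. One small simplification: your Gamma-integral argument for $(1+F)^{-\beta_i}$ being positive-definite is unnecessary, as Lemma~\ref{lem:examplesND} already records $e^{-\beta_i\Phi_2(|u-v|)}=[1+(a_i'+|u-v|^{\alpha_i'})^{\gamma_i'}]^{-\beta_i}$ among its positive-definite examples.
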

\begin{proof}
It follows directly from Lemma \ref{lem:examplesND} that $K$ is positive-definite. Note that with the above conditions, the drift term is smooth and dominated by a term $r^{2\beta}$ with $\beta= \alpha_{i_0}'/2 \in [1/2,1]$, so the system leads to a stationary process $(\br_{12}^t,\br_{13}^t)$.  It follows from Lemma \ref{lemma:transform_to_positve} that the coercivity condition holds true.
\end{proof}

\appendix
\renewcommand*{\thesection}{\Alph{section}}
\section{Appendix}\label{sec:append}
\subsection{Positive-definite integral kernels}

In this section, we review the definitions of positive and negative definite kernels, as well as their basic properties. The following definition is a real version of the definition in \cite[p.67]{BCR84}.

\begin{definition}\label{def_spd}
Let $\mathbb{X}$ be a nonempty set. A function $K:  \mathbb{X}\times \mathbb{X}\rightarrow \RR$ is called a (real) positive-definite kernel iff it is symmetric (i.e. $K(x,y)=K(y,x)$) and
\begin{eqnarray}
\sum_{j,k=1}^{n}c_jc_k K(x_j,x_k)\geq 0\label{pr}
\end{eqnarray}
for all $n\in \NN$, $\{x_1,\ldots,x_n\}\in \mathbb{X}$ and $\{c_1,\ldots,c_n\}\in \RR$. We call the function $\phi$ a (real) negative definite kernel iff it is symmetric and
\begin{eqnarray}
\sum_{j,k=1}^{n}c_jc_k  K(x_j,x_k)\leq 0\label{nr}
\end{eqnarray}
for all $n\geq 2$, $\{x_1,\ldots,x_n\}\in \mathbb{X}$ and $\{c_1,\ldots,c_n\}\in \RR$ with $\sum_{j=1}^{n}c_j=0.$
\end{definition}

\noindent{\textbf{Remark.}} In \cite[p.67]{BCR84}, a function $K: \mathbb{X}\times \mathbb{X}\rightarrow \CC$ is defined to be positive-definite iff
\begin{eqnarray}
\sum_{j,k=1}^{n}c_j\overline{c}_k K(x_j,x_k)\geq 0\label{pc}
\end{eqnarray}
for all $n\in \NN$, $\{x_1,\ldots,x_n\}\in \mathbb{X}$ and $\{c_1,\ldots,c_n\}\in \CC$, where $\overline{c}$ denotes the complex conjugate of a complex number $c$. It is straightforward to check that when $\phi$ is real-valued and symmetric, the definitions (\ref{pr}) and (\ref{pc}) are equivalent. Similarly, In the definition of negative definiteness in \cite[p.67]{BCR84}, a function $K: \mathbb{X}\times \mathbb{X}\rightarrow \CC$ is negative definite iff it is Hermitian (i.e. $ K(x,y)=\overline{ K(y,x)}$) and
\begin{eqnarray}
\sum_{j,k=1}^{n}c_j\overline{c}_k K(x_j,x_k)\leq 0\label{nc}
\end{eqnarray}
for all $n\geq 2$, $\{x_1,\ldots,x_n\}\in \mathbb{X}$ and $\{c_1,\ldots,c_n\}\in \CC$ with $\sum_{j=1}^{n}c_j=0$. We can again check that when $\phi$ is real-valued, the definitions (\ref{nr}) and (\ref{nc}) are equivalent. In this paper, we only consider real-valued, symmetric kernels.

 \begin{theorem}[Properties of positive-definite kernels]\label{t52}
 Suppose that $k, k_1, k_2: \mathbb{X}\times\mathbb{X}\subset\mathbb{R}^d\times\mathbb{R}^d\to \mathbb{R}$ are positive-definite kernels. Then
\begin{enumerate} \setlength\itemsep{0mm}
\item $c_1k_1+c_2k_2$ is positive-definite, for $c_1,c_2\ge0$

\item $k_1k_2$ is positive-definite. (\cite[p.69]{BCR84})

\item $\exp(k)$ is positive-definite. (\cite[p.70]{BCR84})

\item $k(f(u),f(v))$ is positive-definite for any  map $f:\mathbb{R}^d\to \mathbb{R}^d$

\item Inner product $\inp{u,v}=\sum_{j=1}^du_jv_j$ is positive-definite (\cite[p.73]{BCR84})

\item $f(u)f(v)$ is positive-definite for any function $f:\mathbb{X}\to \mathbb{R}$ (\cite[p.69]{BCR84}).

\item If $k(u,v)$ is measurable and integrable, then $\iint k(u,v)dudv\ge0$ (\cite[p.524]{RKSF})
\end{enumerate}
\end{theorem}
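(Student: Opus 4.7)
The plan is to verify each of the seven properties directly from Definition~\ref{def_spd}, reducing each to a manipulation of the defining quadratic form and only appealing to the cited references where the full argument is more delicate. Throughout, symmetry is preserved trivially in each construction, so only positivity of the quadratic form needs checking.

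Items (1), (4), (5), and (6) I would dispatch by direct computation. For (1), linearity gives $\sum_{j,k} c_j c_k (\alpha k_1 + \beta k_2)(x_j,x_k) = \alpha \sum_{j,k} c_j c_k k_1(x_j,x_k) + \beta \sum_{j,k} c_j c_k k_2(x_j,x_k) \geq 0$ for $\alpha,\beta \geq 0$. For (5), $\sum_{j,k} c_j c_k \langle x_j, x_k\rangle = \bigl\| \sum_j c_j x_j \bigr\|^2 \geq 0$. For (6), $\sum_{j,k} c_j c_k f(x_j) f(x_k) = \bigl(\sum_j c_j f(x_j)\bigr)^2 \geq 0$. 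For (4), setting $y_j := f(x_j)$ reduces the quadratic form for $k(f(\cdot),f(\cdot))$ at $\{x_j\}$ to the one for $k$ at $\{y_j\}$, which is nonnegative by hypothesis.

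For item (2), the product, I would invoke the Schur product theorem: the Hadamard product of two positive semidefinite matrices is positive semidefinite. Applied to the Gram matrices $(k_1(x_j,x_k))_{j,k}$ and $(k_2(x_j,x_k))_{j,k}$, this yields the inequality. If a self-contained argument is preferred, one spectrally decomposes $(k_1(x_j,x_k)) = \sum_\ell \lambda_\ell \psi_\ell \psi_\ell^\top$ with $\lambda_\ell \geq 0$ and expresses the Hadamard product as a conical combination of rank-one matrices of the form $(\psi_\ell(j) k_2(x_j,x_k) \psi_\ell(k))$, to which items (1) and (6) apply. Item (3) then follows by Taylor expansion, $\exp(k) = \sum_{n \geq 0} k^n/n!$: items (1) and (2) show inductively that each partial sum is positive-definite, and since the defining quadratic form involves only finitely many point evaluations of $k$, pointwise convergence of the partial sums transfers the inequality to $\exp(k)$.

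The subtle step, and the main obstacle, is item (7). Heuristically, $\iint k(u,v)\, du\, dv$ is the continuous analogue of a Riemann sum $\sum_{j,k} |Q_j|\, |Q_k|\, k(u_j,u_k)$, which is nonnegative by positive-definiteness applied with weights $c_j = |Q_j|$; one then passes to the limit. Justifying this passage under only measurability and integrability (rather than continuity) is the delicate part. I would first reduce to the case of a continuous, compactly supported $k$ by a mollification that preserves positive-definiteness (convolution against a nonnegative, symmetric, positive-definite bump function, using item (2) applied in the spectral/Fourier picture), handle the continuous case by dyadic Riemann sums, and then remove the regularization via dominated convergence using the integrability hypothesis. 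Alternatively, one cites \cite[p.524]{RKSF} directly, as already indicated in the statement.
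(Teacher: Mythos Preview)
The paper provides no proof of this theorem at all: it is stated in the appendix purely as a list of cited results, with each item pointing to a page in \cite{BCR84} or \cite{RKSF}, and the text moves immediately on to the next theorem. So there is nothing to compare against; your proposal already goes well beyond what the paper does.

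Your arguments for items (1), (4), (5), (6) are correct and standard. Your handling of (2) via the Schur product theorem, with the self-contained alternative using a spectral decomposition of one Gram matrix, is also correct. Item (3) follows from (1) and (2) by the series argument you give, noting that the limit of positive-definite kernels (under pointwise convergence) is positive-definite, since the defining inequality involves only finitely many evaluations.

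One caution on item (7): the mollification route you sketch is not quite right as written. Convolving a general kernel $k(u,v)$ against a bump in the $(u,v)$ variables does not obviously preserve positive-definiteness, and item (2) concerns pointwise products, not convolutions, so invoking it ``in the spectral/Fourier picture'' is at best a heuristic unless the kernel is translation-invariant. A cleaner self-contained argument is to first show that $\iint k(u,v)f(u)f(v)\,du\,dv\ge 0$ for simple functions $f=\sum_i c_i \mathbf{1}_{A_i}$ by partitioning each $A_i$ into small cells, picking sample points, and using the definition with weights $c_i |A_{i,\ell}|$ together with an $L^1$ approximation (this uses integrability, not continuity); then take $f\equiv 1$ via monotone or dominated convergence. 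Alternatively, as you note and as the paper itself does, one simply cites \cite[p.524]{RKSF}.
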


\begin{theorem} \rm{ \cite[Theorem 3.1.17]{BCR84}} \label{pdm}
Let $K: \mathbb{X}\times \mathbb{X}\rightarrow \RR$ be symmetric. Then $K$ is positive-definite iff
\begin{align*}
\det( K(x_j,x_k)_{j,k\leq n})\geq 0
\end{align*}
for all $n\in \NN$ and all $\{x_1,\ldots,x_n\}\subseteq \mathbb{X}$.
\end{theorem}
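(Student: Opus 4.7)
The plan is to reduce the statement to the equivalence between symmetric positive-semidefiniteness of a finite matrix and non-negativity of all of its principal minors, and then observe that the quantifier "for all finite $\{x_1,\dots,x_n\}\subseteq \mathbb X$" in the hypothesis automatically delivers \emph{all} principal minors, not just the leading ones.

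First I would unwind the definitions. By Definition \ref{def_spd}, the symmetric kernel $K$ is positive-definite iff, for every finite tuple $(x_1,\dots,x_n)\in\mathbb X^n$, the Gram-type matrix
\[
M_n \;=\; \bigl(K(x_j,x_k)\bigr)_{1\le j,k\le n}
\]
is positive semi-definite as a real symmetric matrix. So the claim is equivalent to the linear-algebra fact: $M_n\succeq 0$ for every finite $(x_1,\dots,x_n)\in\mathbb X^n$ iff $\det M_n\ge 0$ for every finite $(x_1,\dots,x_n)\in\mathbb X^n$.

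For the forward direction ($\Rightarrow$) I would simply recall that a real symmetric positive semi-definite matrix has non-negative eigenvalues, hence a non-negative determinant; applied to each $M_n$ this yields $\det M_n\ge 0$. This step is routine.

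For the converse ($\Leftarrow$) the key trick is to exploit the universal quantifier over subsets. Given any fixed tuple $(x_1,\dots,x_n)$, pick any subset of indices $S=\{i_1<\dots<i_k\}\subseteq\{1,\dots,n\}$ and apply the hypothesis to the subtuple $(x_{i_1},\dots,x_{i_k})$: this yields non-negativity of the determinant of the principal submatrix $M_n[S,S]$. Thus \emph{every} principal minor of $M_n$ is non-negative. Invoking the standard non-strict Sylvester criterion (a real symmetric matrix is positive semi-definite iff all of its principal minors are non-negative — see e.g.\ Horn--Johnson), we conclude $M_n\succeq 0$, which is exactly the positive-definiteness of $K$ on the tuple $(x_1,\dots,x_n)$.

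The main obstacle, and the one place that deserves careful wording, is precisely the use of the non-strict Sylvester criterion: the more familiar leading-principal-minor form of Sylvester is valid only in the strict (positive definite) case, and the semi-definite version genuinely requires all principal minors. I would either cite this criterion from a standard reference or, if a self-contained argument is preferred, sketch the short proof via perturbation: apply the strict Sylvester criterion to $M_n+\varepsilon I$ (after noting that all its leading principal minors dominate the corresponding ones of $M_n$ up to $O(\varepsilon)$ corrections that are controlled by the non-negativity of all principal minors of $M_n$), conclude $M_n+\varepsilon I\succ 0$, and let $\varepsilon\downarrow 0$ to obtain $M_n\succeq 0$. Everything else is bookkeeping.
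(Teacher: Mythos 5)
The paper does not prove Theorem~\ref{pdm}: it is cited directly from \cite[Theorem 3.1.17]{BCR84}, so there is no in-paper argument to compare against. Your proof is correct and is essentially the standard one. The crucial observation — that quantifying over \emph{all} finite subsets of $\mathbb{X}$ yields \emph{all} principal minors of every Gram matrix, not merely the leading ones — is exactly what makes the non-strict Sylvester criterion applicable, and you correctly flag that the leading-minor version of Sylvester fails in the semi-definite case (for instance $\left(\begin{smallmatrix}0&0\\0&-1\end{smallmatrix}\right)$ has both leading minors $\ge 0$ yet is not positive semi-definite). Your perturbation sketch can be made fully rigorous via the identity $\det(A+\varepsilon I)=\sum_{j=0}^{n}E_{n-j}(A)\,\varepsilon^{j}$, where $E_k(A)$ is the sum of all $k\times k$ principal minors of $A$: if every principal minor of the symmetric matrix $A$ is nonnegative, then each $E_k(A)\ge 0$ and $E_0(A)=1$, so every leading principal minor of $A+\varepsilon I$ is strictly positive for $\varepsilon>0$; the strict Sylvester criterion then gives $A+\varepsilon I\succ 0$, and letting $\varepsilon\downarrow 0$ yields $A\succeq 0$. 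One small point worth making explicit: the theorem statement ranges over sets $\{x_1,\ldots,x_n\}\subseteq\mathbb{X}$ (no repeats), while the defining inequality \eqref{pr} is often read as allowing repeated points; this is harmless, since a Gram matrix with repeats has the form $B^{\top}MB$ for a duplicate-free Gram matrix $M$ and a $0$-$1$ selection matrix $B$, which preserves positive semi-definiteness.
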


\begin{theorem}\rm{ \cite[Lemma 3.2.1]{BCR84} } \label{tpn}
Let $\mathbb{X}$ be a nonempty set, $x_0\in \mathbb{X}$ and let $\psi: \mathbb{X}\times \mathbb{X}\rightarrow \RR$ be a symmetric kernel. Put $ K(x,y):=\psi(x,x_0)+\psi(y,x_0)-\psi(x,y)-\psi(x_0,x_0)$. Then $K$ is positive-definite iff $\psi$ is negative definite.
\end{theorem}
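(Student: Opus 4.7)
The plan is to prove the equivalence by direct algebraic manipulation, exploiting the fact that the definition of positive-definite kernel has no constraint on the coefficients while negative definiteness requires $\sum c_j=0$. The extra point $x_0$ provides exactly the degree of freedom needed to convert between the two settings.

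For the direction $\psi$ negative definite $\Rightarrow K$ positive-definite, I would start from arbitrary $n\in\NN$, points $x_1,\dots,x_n\in\mathbb{X}$ and real scalars $c_1,\dots,c_n$ (unconstrained), and augment them by $x_0$ with weight $c_0:=-\sum_{j=1}^n c_j$, so that the augmented coefficients $(c_0,c_1,\dots,c_n)$ sum to zero and negative definiteness of $\psi$ applies:
\begin{equation*}
\sum_{j,k=0}^n c_j c_k\,\psi(x_j,x_k)\le 0.
\end{equation*}
Expanding, separating the $j=0$ or $k=0$ terms, and substituting $c_0=-\sum_j c_j$ yields
\begin{equation*}
\sum_{j,k=1}^n c_j c_k\,\psi(x_j,x_k)-2\Bigl(\sum_j c_j\Bigr)\sum_k c_k\,\psi(x_k,x_0)+\Bigl(\sum_j c_j\Bigr)^{\!2}\psi(x_0,x_0)\le 0.
\end{equation*}
Rearranging the sign and recognizing the left-hand side as $-\sum_{j,k=1}^n c_j c_k K(x_j,x_k)$ (after using symmetry of $\psi$ to combine the two copies of $\psi(x_k,x_0)$) gives $\sum c_j c_k K(x_j,x_k)\ge 0$, which is exactly positive-definiteness of $K$ per Definition~\ref{def_spd}.

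For the converse $K$ positive-definite $\Rightarrow \psi$ negative definite, I would pick any $n\ge 2$, points $x_1,\dots,x_n$, and scalars $c_1,\dots,c_n$ with $\sum_j c_j=0$, and evaluate the quadratic form of $K$ directly:
\begin{equation*}
0\le\sum_{j,k=1}^n c_j c_k K(x_j,x_k)=\sum_{j,k} c_j c_k\bigl[\psi(x_j,x_0)+\psi(x_k,x_0)-\psi(x_j,x_k)-\psi(x_0,x_0)\bigr].
\end{equation*}
Under the constraint $\sum_j c_j=0$, the three boundary contributions each carry a factor $\sum_j c_j$ (or its square), hence vanish, leaving $-\sum_{j,k} c_j c_k \psi(x_j,x_k)\ge 0$, i.e.\ negative definiteness of $\psi$.

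There is no real obstacle; the proof is entirely a bookkeeping exercise once one sees the trick of introducing $c_0=-\sum_j c_j$. The only thing to be slightly careful about is handling symmetry (ensuring $\psi(x_j,x_0)$ and $\psi(x_0,x_k)$ are combined correctly) and making sure the argument does not secretly require $\psi(x_0,x_0)\ge 0$ or some other side condition—which it does not, because in both directions the $\psi(x_0,x_0)$ term either cancels (via $\sum c_j=0$) or is absorbed into the quadratic identity that comes from the choice $c_0=-\sum c_j$.
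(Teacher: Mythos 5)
Your proof is correct and is precisely the classical argument given in the cited reference (Berg--Christensen--Ressel, Lemma 3.2.1): the paper itself offers no proof, only the citation, so there is no alternative route to compare against. The augmentation $c_0:=-\sum_j c_j$ in the forward direction and the cancellation under $\sum_j c_j=0$ in the converse are exactly the two bookkeeping steps the standard proof uses, and your observation that no hidden positivity assumption on $\psi(x_0,x_0)$ is needed is accurate.
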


\begin{theorem}\label{t53}Let $\mathbb{X}$ be a nonempty set and let $\psi: \mathbb{X}\times \mathbb{X}\rightarrow \mathbb{R}$ be a kernel. Then $\psi$ is negative definite iff $\exp(-t\psi)$ is positive-definite for all $t>0$.
\end{theorem}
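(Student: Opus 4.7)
The plan is to prove the two directions separately, using the preceding Theorems \ref{t52} and \ref{tpn} as the essential toolkit. Symmetry of $\psi$ is easily handled on both sides: in the ``only if'' direction it is part of the definition, and in the ``if'' direction it follows because $\exp(-t\psi)$ is symmetric for every $t>0$, whence $\psi(x,y)=\psi(y,x)$ by taking logs.

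For the ``only if'' direction, I would use Theorem \ref{tpn} as a pivot to turn negative definiteness of $\psi$ into positive-definiteness of a usable auxiliary kernel. Fix any basepoint $x_0\in\mathbb{X}$ and define
\begin{equation*}
K(x,y) := \psi(x,x_0)+\psi(y,x_0)-\psi(x,y)-\psi(x_0,x_0).
\end{equation*}
By Theorem \ref{tpn}, $K$ is positive-definite. Solving for $\psi$ and exponentiating gives the factorization
\begin{equation*}
\exp(-t\psi(x,y)) = e^{t\psi(x_0,x_0)} \cdot \bigl[e^{-t\psi(x,x_0)} e^{-t\psi(y,x_0)}\bigr] \cdot e^{tK(x,y)}.
\end{equation*}
The first factor is a positive constant (trivially positive-definite). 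The bracketed factor has the form $f(x)f(y)$ with $f(x)=e^{-t\psi(x,x_0)}$, hence is positive-definite by Theorem \ref{t52}(6). For the last factor, $tK$ is positive-definite by Theorem \ref{t52}(1) (since $t>0$), and therefore $e^{tK}$ is positive-definite by Theorem \ref{t52}(3). Finally, Theorem \ref{t52}(2) says products of positive-definite kernels are positive-definite, so $\exp(-t\psi)$ is positive-definite for every $t>0$.

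For the ``if'' direction, I would expand $\exp(-t\psi)$ to first order in $t$ and use the zero-sum constraint in the definition of negative definiteness to kill the constant term. Given $n\ge 2$, points $x_1,\ldots,x_n\in\mathbb{X}$ and reals $c_1,\ldots,c_n$ with $\sum_j c_j=0$, positive-definiteness of $\exp(-t\psi)$ gives
\begin{equation*}
0 \le \sum_{j,k=1}^{n} c_jc_k\, e^{-t\psi(x_j,x_k)} = \sum_{j,k} c_jc_k\Bigl[1 - t\psi(x_j,x_k) + O(t^2)\Bigr].
\end{equation*}
The constant term is $\bigl(\sum_j c_j\bigr)^2 = 0$, so dividing by $t$ and letting $t\to 0^+$ yields $\sum_{j,k} c_jc_k\,\psi(x_j,x_k)\le 0$, which is precisely the negative definiteness condition \eqref{nr}. (The $O(t^2)$ remainder is controlled uniformly in $j,k$ because only finitely many values $\psi(x_j,x_k)$ are involved.)

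The main obstacle is the forward direction, specifically spotting the ``basepoint trick'' that rewrites $\psi$ as an affine combination of $K$ and the one-variable terms $\psi(\cdot,x_0)$; once one has Theorem \ref{tpn} in hand this reduces to multiplicative bookkeeping among the closure properties listed in Theorem \ref{t52}. The backward direction is essentially a differentiation at $t=0$ and presents no real difficulty.
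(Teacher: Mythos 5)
Your proof is correct, and it follows the same route as the reference \cite{BCR84} (Theorem 3.2.2) that the paper cites in lieu of a full argument: the forward direction uses the basepoint identity of Theorem \ref{tpn} together with the closure properties in Theorem \ref{t52}, and the reverse direction differentiates $\exp(-t\psi)$ at $t=0^+$ under the zero-sum constraint. In effect you have supplied the details that the paper's proof delegates to the reference.
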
\vspace{-2mm}
\begin{proof}The complex version of this theorem is proved in Theorem 3.2.2 of  of \cite{BCR84}. The real version can be proved in a similar way.
\end{proof}

\begin{theorem}\label{t54}If $\psi:\mathbb{X}\times \mathbb{X}\rightarrow\RR$ is negative definite and $\psi(x,x)\geq 0$, then so are $\psi^{\alpha}$ for $0<\alpha<1$ and $\log(1+\psi)$.
\end{theorem}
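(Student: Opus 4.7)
The plan is to reduce both claims to Theorem~\ref{t53}, which identifies negative-definiteness of $\psi$ with positive-definiteness of $e^{-t\psi}$ for all $t>0$, by writing $\psi^\alpha$ and $\log(1+\psi)$ as superpositions (against a nonnegative scalar measure on a parameter $\lambda$) of functions of the form $1 - e^{-\lambda \psi}$.

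First, I would establish the pointwise inequality $\psi(x,y) \geq 0$ for all $x,y \in \mathbb{X}$, so that the scalar representations below can be applied at $t = \psi(x,y)$. Taking $n = 2$ and $c_1 = -c_2 = 1$ in Definition~\ref{def_spd} (so $\sum_i c_i = 0$), the defining inequality reads $\psi(x_1,x_1) - 2\psi(x_1,x_2) + \psi(x_2,x_2) \leq 0$; since $\psi(x,x) \geq 0$ by hypothesis, this forces $\psi(x_1,x_2) \geq \tfrac12(\psi(x_1,x_1)+\psi(x_2,x_2)) \geq 0$.

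Next, I would invoke the two classical Bernstein-type identities
\begin{equation*}
t^\alpha = \frac{\alpha}{\Gamma(1-\alpha)} \int_0^\infty \bigl(1 - e^{-\lambda t}\bigr)\, \lambda^{-1-\alpha}\, d\lambda, \qquad \log(1+t) = \int_0^\infty \bigl(1 - e^{-\lambda t}\bigr)\, \frac{e^{-\lambda}}{\lambda}\, d\lambda,
\end{equation*}
valid for $t \geq 0$. The first follows from the substitution $s=\lambda t$ and $\int_0^\infty(1-e^{-s})s^{-1-\alpha}\,ds = \Gamma(1-\alpha)/\alpha$; the second follows by differentiating under the integral (the derivative equals $1/(1+t)$) and checking both sides at $t=0$. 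Denote the two nonnegative measures by $\mu_\alpha(d\lambda) = \tfrac{\alpha}{\Gamma(1-\alpha)} \lambda^{-1-\alpha}\,d\lambda$ and $\nu(d\lambda) = \lambda^{-1}e^{-\lambda}\,d\lambda$. Then for any $c_1,\dots,c_n \in \RR$ with $\sum_i c_i = 0$, the key identity
\begin{equation*}
\sum_{i,j} c_i c_j \bigl(1 - e^{-\lambda \psi(x_i,x_j)}\bigr) = -\sum_{i,j} c_i c_j\, e^{-\lambda \psi(x_i,x_j)},
\end{equation*}
uses $\sum_{i,j} c_i c_j = \bigl(\sum_i c_i\bigr)^2 = 0$, and the right-hand side is $\leq 0$ for every $\lambda>0$ by Theorem~\ref{t53}. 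Integrating against $\mu_\alpha$ (resp. $\nu$), and swapping the finite sum with the integral, gives $\sum_{i,j} c_i c_j \psi(x_i,x_j)^\alpha \leq 0$ (resp. $\sum_{i,j} c_i c_j \log(1+\psi(x_i,x_j)) \leq 0$); symmetry of the new kernel is inherited from that of $\psi$.

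The only non-routine ingredient is the correspondence between negative-definiteness of $\psi$ and positive-definiteness of $e^{-\lambda \psi}$, which is already available as Theorem~\ref{t53}; everything else reduces to verifying the two scalar integral formulas and a trivial Fubini exchange on a finite sum. Thus I do not anticipate a serious obstacle — this is really a structural ``Bernstein function composition'' argument packaged in the negative-definite language.
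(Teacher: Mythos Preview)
Your argument is correct and is precisely the standard Bernstein-function argument from \cite[Theorem~3.2.10]{BCR84} to which the paper's own proof defers (the paper gives no self-contained proof, only the citation and the remark that the real case goes through verbatim). In particular, your two integral representations for $t^\alpha$ and $\log(1+t)$, together with Theorem~\ref{t53}, are exactly the ingredients used there; the only cosmetic omission is the trivial verification that the new kernels also satisfy the diagonal condition $\psi^\alpha(x,x)\geq 0$ and $\log(1+\psi(x,x))\geq 0$, which is immediate from $\psi(x,x)\geq 0$.
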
\vspace{-2mm}
\begin{proof}The complex version of this theorem is proved in Theorem 3.2.10 of \cite{BCR84}. The real version can be proved in a similar way.
\end{proof}

\begin{theorem}\rm{\cite[Proposition 3.3.2]{BCR84} }\label{t55}Let $\mathbb{X}$ be nonempty and $\psi: \mathbb{X}\times \mathbb{X}\rightarrow \CC$ be negative definite. Assume $\{(x,y)\in \mathbb{X}\times \mathbb{X}, \psi(x,y)=0\}=\{(x,x): x\in \mathbb{X}\}$, then $\sqrt{\psi}$ is a metric on $\mathbb{X}$.
\end{theorem}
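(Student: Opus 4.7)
The plan is to realize $\sqrt{\psi(x,y)}$ as (a scalar multiple of) a distance between images of $\mathbb{X}$ in a Hilbert space, so that the metric axioms for $\sqrt{\psi}$ reduce to the axioms for a Hilbert norm. The bridge from negative definiteness of $\psi$ to a positive-definite structure I can embed is Theorem \ref{tpn}, which has already been recorded.

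First I would note that the hypothesis $\{(x,y):\psi(x,y)=0\}=\{(x,x):x\in\mathbb{X}\}$ in particular forces $\psi(x,x)=0$ for every $x\in\mathbb{X}$. Fix an arbitrary base point $x_0\in\mathbb{X}$ and set
\begin{equation*}
K(x,y):=\psi(x,x_0)+\psi(y,x_0)-\psi(x,y)-\psi(x_0,x_0)=\psi(x,x_0)+\psi(y,x_0)-\psi(x,y).
\end{equation*}
By Theorem \ref{tpn}, $K$ is (real) positive-definite on $\mathbb{X}$. Then by the standard Moore--Aronszajn / GNS construction, there exist a real Hilbert space $H$ and a map $\varphi:\mathbb{X}\to H$ such that $K(x,y)=\langle \varphi(x),\varphi(y)\rangle_H$ for all $x,y\in\mathbb{X}$.

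Second I would compute the squared distance between $\varphi(x)$ and $\varphi(y)$. Using $\psi(x,x)=0$, one gets $\|\varphi(x)\|_H^2=K(x,x)=2\psi(x,x_0)$, and similarly for $y$. Expanding,
\begin{equation*}
\|\varphi(x)-\varphi(y)\|_H^2=K(x,x)+K(y,y)-2K(x,y)=2\psi(x,x_0)+2\psi(y,x_0)-2[\psi(x,x_0)+\psi(y,x_0)-\psi(x,y)]=2\psi(x,y).
\end{equation*}
This already shows $\psi(x,y)\ge 0$ for all $x,y$ (so $\sqrt{\psi(x,y)}$ is well-defined and real) and yields the identification $\sqrt{\psi(x,y)}=\tfrac{1}{\sqrt{2}}\,\|\varphi(x)-\varphi(y)\|_H$.

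From here the metric axioms for $\sqrt{\psi}$ are immediate: symmetry and non-negativity are inherited from the norm, and the triangle inequality $\sqrt{\psi(x,z)}\le\sqrt{\psi(x,y)}+\sqrt{\psi(y,z)}$ is exactly $\tfrac{1}{\sqrt{2}}\|\varphi(x)-\varphi(z)\|_H\le\tfrac{1}{\sqrt{2}}\|\varphi(x)-\varphi(y)\|_H+\tfrac{1}{\sqrt{2}}\|\varphi(y)-\varphi(z)\|_H$. The only remaining axiom is the separation property, which is where the hypothesis on the zero set of $\psi$ is used: $\sqrt{\psi(x,y)}=0\iff\psi(x,y)=0\iff x=y$. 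The only nontrivial ingredient is the Hilbert-space realization of the positive-definite kernel $K$, which is a classical fact; all remaining steps are algebraic identities.
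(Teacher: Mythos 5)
Your proof is correct and is essentially the classical Schoenberg embedding argument used in \cite[Proposition 3.3.2]{BCR84}, to which the paper simply defers without reproducing a proof: pass from the negative definite $\psi$ to the positive-definite $K$ via Theorem \ref{tpn}, realize $K$ as a Gram kernel in a Hilbert space, and read off $\sqrt{\psi(x,y)}=\tfrac{1}{\sqrt{2}}\|\varphi(x)-\varphi(y)\|_H$. The computation $K(x,x)+K(y,y)-2K(x,y)=2\psi(x,y)$ is right, and the separation axiom is exactly where the hypothesis on the zero set of $\psi$ enters, as you identify.
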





\subsection{M\"untz-type theorems on half-line}
We recall first the following theorem on the completeness of $\{t^{a_n}\}$ in weighted $L^2$ space on unbounded domain (see \cite{Fuchs1946, boas1946properties} and see \cite{gao2005generalized, horvath2014muntz} for recent developments ).
\begin{theorem}\label{t510}
Let $a_k$ be positive numbers, such that $a_{k+1}-a_k\geq d>0, (k=1,2,\ldots)$, and let
\begin{eqnarray*}
\log \psi(r)=\begin{cases}2\sum_{a_k<r}\frac{1}{a_k},\ \mathrm{if}\ r>a_1\\\frac{2}{a_1},\ \mathrm{if}\ r\leq a_1.\end{cases}
\end{eqnarray*}
Then $\{e^{-t}t^{a_k}\}$ is complete in $L^2(0,\infty)$ iff
$\int_1^{\infty}\frac{\psi(r)}{r}dr=\infty$.
\end{theorem}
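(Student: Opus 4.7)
My plan is to convert the completeness question into a uniqueness problem for analytic functions on a half-plane, then translate the integral condition on $\psi$ into a Blaschke/density condition on the zero set $\{a_k\}$ via Paley--Wiener and Nevanlinna theory.

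First, by duality, completeness of $\{e^{-t}t^{a_k}\}$ in $L^2(0,\infty)$ is equivalent to the statement that any $f\in L^2(0,\infty)$ with $\int_0^\infty f(t)\,e^{-t}\,t^{a_k}\,dt=0$ for every $k$ must vanish identically. Given such an $f$, I would introduce
\[
F(z)\;=\;\int_0^\infty f(t)\,e^{-t}\,t^z\,dt,
\]
which, by Cauchy--Schwarz against the weight $e^{-2t}$, is well-defined and holomorphic on the half-plane $\{\Re z>-\tfrac12\}$, with controlled growth along vertical lines and rapid decay as $\Re z\to\infty$. The hypothesis becomes $F(a_k)=0$ for every $k\geq 1$, so the question is: under what condition on $\{a_k\}$ must such an $F$ vanish?

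Second, I would transfer the problem to the unit disk via the Möbius map $\phi(z)=(z-1)/(z+1)$, which sends the right half-plane to $|w|<1$ and the positive reals $a_k$ to $\zeta_k=(a_k-1)/(a_k+1)\in(-1,1)$, with $1-|\zeta_k|\sim 2/a_k$. A Paley--Wiener-style characterization of $F$ places $\tilde F(w):=F(\phi^{-1}(w))$ in a Nevanlinna-type class on the disk, for which any nonzero element has zero sequence obeying the Blaschke condition $\sum_k(1-|\zeta_k|)<\infty$. An Abel-summation argument, using $\log\psi(r)=2\sum_{a_k<r}1/a_k$ together with the gap condition $a_{k+1}-a_k\geq d$, then shows that divergence of $\int_1^\infty \psi(r)/r\,dr$ corresponds precisely to the failure of the Blaschke-type summability natural to this weighted class (a Carleman-type quasi-analyticity criterion refining the crude $\sum 1/a_k=\infty$). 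This forces $\tilde F\equiv 0$, hence $F\equiv 0$, and inverting the transform gives $f\equiv 0$, proving completeness.

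For the converse, assuming $\int_1^\infty \psi(r)/r\,dr<\infty$, I would construct a nonzero $F$ vanishing exactly at $\{a_k\}$ by forming a canonical Weierstrass/Blaschke product in the half-plane adapted to the growth of $\psi$, verify that it lies in the Paley--Wiener image of $L^2(0,\infty)$, and read off a nonzero $f\perp \{e^{-t}t^{a_k}\}$. The main obstacle I anticipate is the sharp identification of this image class: one needs a quantitative Paley--Wiener theorem matching $L^2(0,\infty)$ to a specific weighted Hardy/Nevanlinna space on the half-plane, and one must verify that the $\psi$-integral condition is exactly the Blaschke condition for that class. Showing that the gap hypothesis $a_{k+1}-a_k\geq d$ rules out the exceptional configurations that would otherwise allow a nonzero $F$ even when $\int \psi/r\,dr=\infty$ is the delicate technical core of the argument, and it is where the precise power $2$ in the definition of $\log\psi$ will enter the estimate.
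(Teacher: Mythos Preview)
The paper does not prove this theorem: it is quoted in the appendix as a classical result, with a citation to Fuchs (1946) and Boas (1946), and is used only as a black box in the subsequent lemma. So there is no ``paper's own proof'' to compare against.

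That said, your outline is in the right spirit --- the original proofs do proceed via an integral transform and a uniqueness theorem for analytic functions in a half-plane --- but the sketch papers over the crucial point. For functions in a genuine Hardy class on the half-plane, the Blaschke condition is exactly $\sum_k 1/a_k<\infty$, and the corresponding completeness criterion would be $\sum_k 1/a_k=\infty$. The theorem here asserts something strictly finer: completeness is governed by $\int_1^\infty \psi(r)\,r^{-1}\,dr$ with $\log\psi(r)=2\sum_{a_k<r}1/a_k$, which can diverge even when $\sum 1/a_k<\infty$ and can converge even when $\sum 1/a_k=\infty$ is borderline. Your transform $F(z)=\int_0^\infty f(t)e^{-t}t^z\,dt$ is entire of order one (not merely in a Hardy space on a half-plane), because the weight $e^{-t}$ kills the growth; the correct uniqueness theorem is therefore a density/Carleman-type statement for entire functions of exponential type, not a Blaschke condition. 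The gap hypothesis $a_{k+1}-a_k\ge d$ and the factor $2$ in $\log\psi$ enter through a Jensen-formula estimate on the counting function of zeros of such entire functions, and this is where the $\psi$-integral arises --- not, as you suggest, through a M\"obius transfer to the disk. Without identifying the right class (entire, finite exponential type, with an $L^2$ bound on a vertical line coming from Plancherel for the Mellin transform) and invoking the matching zero-density theorem, the argument as written would only recover the cruder $\sum 1/a_k$ criterion.
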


\begin{lemma}\label{lemma:Muntz}
The set of functions $\{r^{2k}, k=1,2,\cdots\}$ is complete in $L^2([0,\infty), \rho)$ for any probability density $\rho$ such that $\sup_{r>0} \rho(r)e^{2r}<\infty$.
\end{lemma}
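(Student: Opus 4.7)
The plan is to reduce the claim to the Fuchs--Boas completeness theorem (Theorem \ref{t510}) via a change of the reference measure. Taking the exponent sequence $a_k=2k$, the gap condition $a_{k+1}-a_k=2>0$ is immediate. A direct computation gives $\log\psi(r)=2\sum_{2k<r}(2k)^{-1}=\sum_{k<r/2}k^{-1}$, which grows like $\log(r/2)$ as $r\to\infty$; hence $\psi(r)\sim r/2$ and $\int_1^\infty \psi(r)\,r^{-1}\,dr=\infty$. Theorem \ref{t510} then yields completeness of $\{e^{-r}r^{2k}\}_{k\ge 1}$ in $L^2(0,\infty)$.

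Next I would transfer this completeness to the weighted space $L^2(\rho)$ by the change of function $h\mapsto f$, where $f(r):=h(r)\rho(r)e^{r}$. The hypothesis $M:=\sup_{r>0}\rho(r)e^{2r}<\infty$ gives
\[
\|f\|_{L^2(0,\infty)}^2=\int_0^\infty h(r)^2\rho(r)^2 e^{2r}\,dr\ \leq\ M\int_0^\infty h(r)^2\rho(r)\,dr\ =\ M\,\|h\|_{L^2(\rho)}^2,
\]
so $f\in L^2(0,\infty)$ whenever $h\in L^2(\rho)$. Under this correspondence the orthogonality relations transform as
\[
\int_0^\infty h(r)\,r^{2k}\,\rho(r)\,dr\ =\ \int_0^\infty \bigl(e^{-r}r^{2k}\bigr)\,f(r)\,dr,
\]
so vanishing in $L^2(\rho)$ against every $r^{2k}$ is equivalent to vanishing in $L^2(0,\infty)$ against every $e^{-r}r^{2k}$.

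Combining the two steps, if $h\in L^2(\rho)$ is orthogonal to $\{r^{2k}\}_{k\ge 1}$, then $f$ is orthogonal to $\{e^{-r}r^{2k}\}_{k\ge 1}$ in $L^2(0,\infty)$, which forces $f\equiv 0$ by the first step; since $\rho(r)e^r>0$ on $\operatorname{supp}\rho$, we conclude $h=0$ in $L^2(\rho)$. The argument is essentially bookkeeping: the nontrivial analytic content is entirely contained in Theorem \ref{t510}, and the decay assumption $\sup_{r>0}\rho(r)e^{2r}<\infty$ is precisely the condition that makes the multiplication operator $h\mapsto h\rho e^{r}$ map $L^2(\rho)$ into $L^2(0,\infty)$. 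Consequently there is no real obstacle; the only mild subtlety is verifying the growth condition on $\psi$, which reduces to the divergence of the harmonic series.
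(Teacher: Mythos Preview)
Your proposal is correct and follows essentially the same route as the paper: apply Theorem \ref{t510} with $a_k=2k$, verify the divergence condition via the harmonic-series growth of $\log\psi$, and then transfer completeness to $L^2(\rho)$ by writing $f(r)=h(r)\rho(r)e^{r}$ and using $\sup_{r>0}\rho(r)e^{2r}<\infty$ to place $f$ in $L^2(0,\infty)$. Your write-up is in fact slightly cleaner than the paper's (which has a harmless typo in the integrand $\psi(r)/r^2$ and a rough bound $\psi(r)\ge r$).
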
\vspace{-2mm}
\begin{proof} Let $a_k=2k$ for $k=1,2,\cdots$.
We define the function $\log(\psi(r))=2\sum_{a_k<r}\frac{1}{a_k}$, if $r>a_1$, and $\log(\psi(r))=\frac{2}{a_1}$ if $r\leq a_1$. Note that $2\sum_{a_k<r}\frac{1}{a_k} = \sum_{k=1}^{\lfloor{r/2 \rfloor}} \frac{1}{k} >  \ln(\lfloor{r/2 \rfloor})$. Then $\psi(r)\geq r$ and
$\int_{1}^{\infty}\frac{\psi(r)}{r^2}=\infty.$
We conclude that  $\{e^{-t}t^{2k}, k=1,2,\cdots\}$ is complete in $L^2(0,\infty)$ by Theorem \ref{t510}.

To show that $\{r^{2k}, k=1,2,\cdots\}$ is complete in $L^2(\rho)$, assume that
$\langle h (r),r^{2k}\rangle_{L^2(\rho)}=0$ for all $k\geq 1$.  Then
\[ \int_{0}^{\infty}h (r)\rho(r)e^{r} r^{2k} e^{-r}dr = \int_{0}^{\infty}h (r)r^{2k}\rho(r)dr=0\]
 for all $k$. This implies that $h(r)\rho(r) e^{r}=0$ in $L^2[0,\infty)$ (note that  $h(r)\rho(r) e^{r} \in L^2[0,\infty) $ because $\sup_{r>0}\rho(r)e^{2r}<\infty$).  Hence $h(r)\rho(r)=0$ almost everywhere, and $h=0$ in $L^2([0,\infty),\rho)$.
\end{proof}

\subsection{Stationary measure for a gradient system}
\begin{lemma}\label{lemma:GradS_inv}
Suppose $H:\R^n\to \R$ is locally Lipschitz and $\lim_{|x|\to\infty} H(x) = +\infty$ so that $Z= \int_{\R^n}e^{-2H(x)} dx<\infty$. Then $p(x)= \frac{1}{Z} e^{-2H(x)}$ is an invariant density to gradient system
\begin{equation*}
dX_t= -\nabla H(X_t)dt +dB_t,
\end{equation*}
where $(B_t) $ is an n-dimensional standard Brownian motion.
\end{lemma}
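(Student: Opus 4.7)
The plan is to verify that $p$ satisfies the stationary Fokker--Planck (Kolmogorov forward) equation associated with the SDE, which is the standard criterion for invariance of a density. For the SDE
\[
dX_t = -\nabla H(X_t)\,dt + dB_t,
\]
the infinitesimal generator acting on $f \in C_c^2(\R^n)$ is $\mathcal{L}f = -\nabla H \cdot \nabla f + \tfrac12 \Delta f$, and its formal $L^2(dx)$ adjoint is $\mathcal{L}^\ast p = \nabla \cdot (p\,\nabla H) + \tfrac12 \Delta p$. So the plan is to check that $\mathcal{L}^\ast p = 0$ for $p = Z^{-1} e^{-2H}$.

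The key identity is $\nabla p = -2 p\,\nabla H$, equivalently $p\,\nabla H = -\tfrac12 \nabla p$. Taking the divergence and substituting,
\[
\nabla \cdot (p\,\nabla H) = -\tfrac12 \Delta p,
\]
so $\mathcal{L}^\ast p = -\tfrac12 \Delta p + \tfrac12 \Delta p = 0$. Integrating against a test function $f \in C_c^\infty(\R^n)$ and using two integrations by parts (there is no boundary contribution since $f$ has compact support) gives $\int \mathcal{L}f \cdot p\,dx = 0$, which is the stationarity criterion: $\int \mathcal{L}f\, d\mu = 0$ for all $f \in C_c^\infty(\R^n)$, where $d\mu = p\,dx$.

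Since $H$ is only locally Lipschitz, $\nabla H$ exists merely a.e.\ (Rademacher's theorem) and the calculation above is justified in the distributional sense. To pass from ``$\int \mathcal L f\,d\mu = 0$ for smooth compactly supported $f$'' to genuine invariance of the diffusion, I would invoke the martingale-problem characterization: under the assumed coercivity $\lim_{|x|\to\infty} H(x) = +\infty$, the potential $H$ is a Lyapunov function (since $\mathcal{L}H = -|\nabla H|^2 + \tfrac12 \Delta H$ is controlled near infinity in a classical sense a.e.), so the SDE admits a unique non-explosive weak solution, and the above computation then yields $\tfrac{d}{dt} \int f\,d\mu_t = 0$ when $X_0 \sim \mu$, i.e.\ $\mu$ is invariant.

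The main (mild) obstacle is the regularity issue: since $\nabla H$ is only defined a.e., one must justify the integration by parts rigorously. The clean way is to note that the identity $\nabla \cdot (e^{-2H} \nabla H) = -\tfrac12 \Delta e^{-2H}$ is a distributional identity that is immediate from the chain rule $\nabla e^{-2H} = -2 e^{-2H} \nabla H$ (valid for locally Lipschitz $H$ in the weak sense), so testing against $f \in C_c^\infty(\R^n)$ gives $\int \mathcal L f \cdot p\,dx = 0$ without requiring pointwise differentiation of $\nabla H$. This is the only subtlety; everything else is a direct calculation.
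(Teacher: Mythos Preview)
Your proposal is correct and follows exactly the same approach as the paper: verifying that $p$ is a stationary solution of the Kolmogorov forward equation $\tfrac12\Delta p + \nabla\cdot(p\nabla H)=0$. The paper's proof is in fact just a one-line statement of this equation, so your version, with the key identity $\nabla p = -2p\,\nabla H$ and the regularity discussion, is a more detailed rendering of the same argument.
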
\vspace{-2mm}
\begin{proof}
It follows directly by showing that $p(x)$ is a stationary solution to the Kolmogorov forward equation,
\[\frac{1}{2} \Delta p + \nabla \cdot (p \nabla H) = 0.
\]
\end{proof}

\bigskip

\noindent \textbf{Acknowledgements.} {ZL is grateful for support from NSF-1608896 and Simons-638143; FL and MM are grateful for partial support from NSF-1913243; FL for NSF-1821211; MM for NSF-1837991, NSF-1546392 and AFOSR-FA9550-17-1-0280; ST for NSF-1546392 and AFOSR-FA9550-17-1-0280 and AMS Simons travel grant.  FL would like to thank Professor Yaozhong Hu and Dr.~Yulong Lu for helpful discussions. }


\section*{References}
\bibliography{ref_FeiLU,learning_dynamics,LearningTheory,ref_stocParticleSys}


\end{document}